\numberwithin{equation}{section}
\newtheoremstyle{myremark}{10pt}{10pt}{}{}{\bfseries}{.}{.5em}{}
 \newtheorem{thm}{Theorem}[section]
 \newtheorem{cor}[thm]{Corollary}
 \newtheorem{lemma}[thm]{Lemma}
 \theoremstyle{definition}
 \newtheorem{defn}{Definition}
 \newtheorem{rem}{Remark}
\newtheorem{theorem}{Theorem}
\begin{document}

\title{Boundary Hardy inequality on functions of bounded variation}

\author{ADIMURTHI, PROSENJIT ROY, AND VIVEK SAHU}

\address{ Department of Mathematics and Statistics,
Indian Institute of Technology Kanpur, Kanpur - 208016, Uttar Pradesh, India}

\email{Adimurthi: adiadimurthi@gmail.com, adimurthi@iitk.ac.in}
\email{Prosenjit Roy: prosenjit@iitk.ac.in}
\email{Vivek Sahu: viveksahu20@iitk.ac.in, viiveksahu@gmail.com}

\subjclass[2020]{ 46E35, 26D15, 39B62}

\keywords{Hardy inequality, functions of bounded variation, fractional sobolev space.}

\date{}

\dedicatory{}

\begin{abstract} Classical boundary Hardy inequality, that goes back to 1988, states that if ~$1 < p < \infty,  \ ~\Omega$ is bounded Lipschitz domain, then for all ~$u \in C^{\infty}_{c}(\Omega)$, 
$$\int_{\Omega} \frac{|u(x)|^{p}}{\delta^{p}_{\Omega}(x)}  dx \leq C\int_{\Omega} |\nabla u(x) |^{p}dx,$$ where ~$\delta_\Omega(x)$ is the distance function from ~$\Omega^c$. In this article, we address the long standing open question on the case $p=1$ by establishing appropriate boundary Hardy inequalities in the space of functions of bounded variation.  We first establish appropriate inequalities on fractional Sobolev spaces $W^{s,1}(\Omega)$ and then Brezis, Bourgain and  Mironescu's result on limiting behavior of fractional Sobolev spaces as ~$s\rightarrow 1^{-}$ plays an important role in the proof.  Moreover, we also derive an infinite series Hardy inequality for the case ~$p=1$.
\end{abstract}

\maketitle
\section{Introduction}The classical Hardy inequality for the local case is given by
\begin{equation*}
    \int_{\mathbb{R}^d} \frac{|u(x)|^{p}}{|x|^{p}} dx \leq   \left| \frac{p}{p-d} \right|^{p} \int_{\mathbb{R}^d} |\nabla u(x)|^{p} dx,
\end{equation*}
holds for all ~$u \in C^{\infty}_{c}(\mathbb{R}^d)$ if ~$1<p<d$ and for all ~$u \in C^{\infty}_{c}(\mathbb{R}^d \backslash \{ 0 \})$ if ~$p>d$. Let ~$\Omega$ be a bounded domain in ~$\mathbb{R}^d$, ~ $d \geq 2$, with ~$0 \in \Omega$, we have
\begin{equation}\label{hardy ineq}
    \int_{\Omega} \frac{|u(x)|^{p}}{|x|^{p}} dx \leq \left( \frac{p}{d-p} \right)^{p} \int_{\Omega} |\nabla u(x)|^{p} dx,
\end{equation}
holds for all ~$ u \in C^{\infty}_{c}(\Omega)$ if ~$1 < p < d$ and the constant ~$\left( \frac{p}{d-p} \right)^{p}$ is sharp but never achieved. The inequality analogous to ~\eqref{hardy ineq} for the case ~$p=d=2$ was explored by Leray in ~\cite{leray1933}, and it has been extended to ~$p=d \geq 2$ by ~\cite{adi2002, tertikas2003, tertikas2004}. It can be formulated as follows: Let ~$\Omega \subset \mathbb{R}^d$, where ~$d \geq 2$, be a bounded domain. Then, there exists a constant ~$C=C(d, \Omega, R)$ such that for any ~$u \in W^{1,d}_{0}(\Omega)$,
\begin{equation*}
    \int_{\Omega} |\nabla u(x)|^{d} dx \geq C \int_{\Omega} \frac{|u(x)|^{d}}{|x|^{d}} \left( \ln \frac{R}{|x|} \right)^{- d} dx.
\end{equation*}
where ~$R \geq \sup_{\Omega} (|x|e^{\frac{2}{p}})$.

\smallskip 

J. L. Lewis in ~\cite{lewis1988} proved the boundary Hardy inequality for the local case and established the following result: let ~$\Omega \subset \mathbb{R}^d$ be a bounded domain with Lipschitz boundary and ~$1<p< \infty$, then there exists a constant ~$C=C(d,p, \Omega)>0$ such that
\begin{equation}\label{boundary hardy inequality}
      \int_{\Omega} \frac{|u(x)|^{p}}{\delta^{p}_{\Omega}(x)}  dx \leq C \int_{\Omega} |\nabla u(x) |^{p}dx, \hspace{.3cm} \text{for all} \ u \in C^{\infty}_{c}(\Omega),
\end{equation}
where ~$\delta_{\Omega}$ is the distance function from the boundary of ~$\Omega$  defined by
\begin{equation*}
    \delta_{\Omega}(x) :=  \underset{y \in \partial \Omega}{\min} |x-y|.
\end{equation*} 
Several generalisations and extensions  of the  above inequality have been done over the last three and a half decades. We refer to some of the works in this direction ~\cite{tertikas2003, tertikas2004, matskewich1997, opic1990}. To the best of our knowledge none of the work in literature is concerned with the case ~$p=1$ in ~\eqref{boundary hardy inequality}. The aim of this article is to establish appropriate inequalities for the case ~$p=1$.  In this article, our objective is to derive a boundary Hardy-type inequality within the space ~$BV(\Omega)$, where ~$\Omega$ is a bounded Lipschitz domain. Our approach initially involves establishing a fractional boundary Hardy inequality for the case ~$p=1$ and ~$s \geq  \frac{1}{2}$. Later, we utilise the well-known result of Brezis, Bourgain, and Mironescu as presented in ~\cite{bourgain2001} and ~\cite{bourgain2002} to obtain the Hardy inequality for functions of bounded variation.  We will recall the exact version of their results that will be required for us in the next section. 

Define the functions:
\begin{equation*}
    \mathcal{L}_{1}(t) = \frac{1}{1-\ln t}, \hspace{3mm} \forall \ t \in (0,1),
\end{equation*}
and recursively
\begin{equation*}
    \mathcal{L}_{m}(t) = \mathcal{L}_{1}(\mathcal{L}_{m-1}(t)), \hspace{3mm} \forall \ m \geq 2.
\end{equation*}

  Let us define the space of functions of bounded variation (also see, ~\cite{gariepy2015measure, giusti}):
\begin{defn}
    Let ~$\Omega \subset \mathbb{R}^d$ be an open set. A function ~$u \in L^{1}(\Omega)$ has bounded variation in ~$\Omega$ if
\begin{equation*}
    [u]_{BV(\Omega)} := \sup \left\{ \int_{\Omega} u(x)  div(\phi(x)) dx : \phi \in C^{1}_{c}(\Omega; \mathbb{R}^d), \ |\phi(x)| \leq 1 \ \text{on}  \ \Omega \right\} < \infty.
\end{equation*}
\end{defn}
We denote ~$BV(\Omega)$ the space of functions of bounded variation in ~$\Omega$ with the  norm \\ ~$||.||_{BV(\Omega)}$ on ~$BV(\Omega)$ as
$||u||_{BV(\Omega)} := [u]_{BV(\Omega)} + ||u||_{L^{1}(\Omega)}.$ Throughout this article ~$(u)_{\Omega}$ will denote the average of ~$u$ over ~$\Omega$ which is given by $(u)_{\Omega} = \frac{1}{|\Omega|} \int_{\Omega} u(y)dy.$\smallskip 

The following theorem is the main result of this article.
\begin{theorem}\label{th: main result}
    Let ~$\Omega$ be a bounded Lipschitz domain such that ~$\delta_{\Omega}(x) < R $ for all ~$x \in \Omega$, for some ~$R >0$ and ~$m \geq 2$ be a positive integer. Then there exists a constant ~$C=C(d, \Omega)>0$ such that  for all ~$u \in BV(\Omega)$, 
    \begin{multline}\label{main in}
         \int_{\Omega} \frac{|u(x) - (u)_{\Omega}|}{\delta_{\Omega}(x)}   \mathcal{L}_{1} \left( \frac{\delta_{\Omega}(x)}{R} \right) \cdots \mathcal{L}_{m-1} \left( \frac{\delta_{\Omega}(x)}{R} \right)  \mathcal{L}^{2}_{m} \left( \frac{\delta_{\Omega}(x)}{R} \right) dx  \leq C 2^{m} [u]_{BV(\Omega)}.
    \end{multline}
    Furthermore, for any $0<\alpha< \frac{1}{2}$, there exists a constant ~$C=C(d,\Omega)$ such that  for all ~$u \in BV(\Omega)$,
    \begin{multline}\label{series expansion main theorem}
     \sum_{m=2}^{\infty}   \alpha^{m}\int_{\Omega} \frac{|u(x) - (u)_{\Omega}|}{\delta_{\Omega}(x)}  \mathcal{L}_{1} \left( \frac{\delta_{\Omega}(x)}{R} \right) \cdots \mathcal{L}_{m-1} \left( \frac{\delta_{\Omega}(x)}{R} \right)  \mathcal{L}^{2}_{m} \left( \frac{\delta_{\Omega}(x)}{R} \right)dx \\  \leq C \left( \frac{4 \alpha^{2}}{1-2 \alpha} \right) [u]_{BV(\Omega)}. 
    \end{multline}
   The above inequality fails when ~$\alpha \geq 1$. 
\end{theorem}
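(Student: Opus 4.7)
The strategy announced in the introduction is to first establish a fractional boundary Hardy inequality in $W^{s,1}(\Omega)$ with iterated-log weights and a $(1-s)$-factor scaling on the right, then pass to the limit $s \to 1^-$ via the Brezis--Bourgain--Mironescu formula to land in $BV(\Omega)$, and finally sum the resulting one-parameter family of BV inequalities to obtain (\ref{series expansion main theorem}) and verify its sharpness at $\alpha \geq 1$.

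\textbf{Step 1 (fractional precursor).} Using the tools developed in the preceding sections, I would prove, for each integer $m \geq 2$ and each $s \in [1/2, 1)$, an inequality of the form
\begin{equation*}
    \int_\Omega \frac{|u(x) - (u)_\Omega|}{\delta_\Omega(x)^s}\, \Psi_{m,s}\!\left(\frac{\delta_\Omega(x)}{R}\right) dx \leq C\, 2^m\, (1-s)\, [u]_{W^{s,1}(\Omega)},
\end{equation*}
valid for every $u \in W^{s,1}(\Omega)$, where $\Psi_{m,s}$ is an $s$-dependent weight chosen so that $\delta_\Omega(x)^{-s}\Psi_{m,s}(\delta_\Omega(x)/R)$ converges pointwise to $\delta_\Omega(x)^{-1}\mathcal{L}_1\cdots\mathcal{L}_{m-1}\mathcal{L}_m^2(\delta_\Omega(x)/R)$ as $s \to 1^-$. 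The geometric $2^m$ factor is the price of iterating a one-step refinement $m$ times (each iteration introducing one additional $\mathcal{L}_k$), and the $(1-s)$ on the right is precisely what synchronizes the inequality with BBM. This step is expected to be the main obstacle.

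\textbf{Step 2 (BV limit and series version).} Every $u \in BV(\Omega)$ belongs to $W^{s,1}(\Omega)$ for each $s \in (0,1)$ (a known consequence of BBM), so the fractional precursor applies directly to $u$. As $s \to 1^-$, the Brezis--Bourgain--Mironescu formula
\begin{equation*}
    \lim_{s \to 1^-}(1-s)\,[u]_{W^{s,1}(\Omega)} = K_d\,[u]_{BV(\Omega)},
\end{equation*}
with $K_d$ a dimensional constant, controls the right-hand side, while Fatou's lemma applied to the non-negative integrand on the left yields (\ref{main in}), with $K_d$ absorbed into $C = C(d, \Omega)$. For (\ref{series expansion main theorem}), multiplying (\ref{main in}) by $\alpha^m$ and summing from $m = 2$ to $\infty$, Tonelli's theorem allows the interchange of sum and integral on the left, while on the right one obtains the geometric series
\begin{equation*}
    \sum_{m=2}^\infty (2\alpha)^m \,=\, \frac{4\alpha^2}{1 - 2\alpha},
\end{equation*}
which is finite precisely for $\alpha < 1/2$.

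\textbf{Step 3 (sharpness at $\alpha \geq 1$).} An analysis of the recursion $\mathcal{L}_{m+1} = \mathcal{L}_1 \circ \mathcal{L}_m$ near its attractive fixed point at $1$ yields $1 - \mathcal{L}_m(t) \sim 2/m$ as $m \to \infty$ for each fixed $t \in (0,1)$. Combined with the behaviour of the iterated logarithms near the boundary ($\delta \to 0$), this allows one to construct a family of BV test functions concentrated near $\partial\Omega$ for which the series on the left of (\ref{series expansion main theorem}) diverges at every $\alpha \geq 1$ while $[u]_{BV(\Omega)}$ remains finite, thereby ruling out any inequality of the form (\ref{series expansion main theorem}) at $\alpha \geq 1$.
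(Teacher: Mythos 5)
Your outline reproduces the paper's high-level strategy (a fractional $W^{s,1}$ inequality scaled by $(1-s)$, a pass to $BV$ via Fatou and Bourgain--Brezis--Mironescu, geometric summation giving $\sum_{m\geq 2}(2\alpha)^m=\frac{4\alpha^2}{1-2\alpha}$ for $\alpha<\tfrac12$, and a counterexample for $\alpha\geq 1$), and Step 2 is essentially sound: the only caveats are that you should either justify the embedding $BV(\Omega)\subset W^{s,1}(\Omega)$ for Lipschitz domains or, as the paper does, replace it by an approximation argument (smooth functions converging in $L^1$ with convergence of total variations, Lemma~\ref{density}), and that the reduction from the natural fractional statement, which controls $\int_\Omega |u|\,\delta_\Omega^{-s}(\cdots)$ by $(1-s)[u]_{W^{s,1}}$ \emph{plus} an $L^1$ term, to the mean-subtracted form requires a Poincar\'e inequality (the paper uses the $BV$ Poincar\'e inequality, Lemma~\ref{poincare in BV}, after the limit $s\to 1^-$).

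The genuine gap is Step 1: you state the fractional precursor as a hypothesis ("I would prove\ldots this step is expected to be the main obstacle") without any argument, and that inequality \emph{is} the content of the theorem — everything else is soft. The paper's proof of it occupies Sections 3--4: a dyadic decomposition $A_k=\{3^k\leq \delta<3^{k+1}\}$ (further cut into cubes $A_k^i$ in dimension $d\geq 2$), the scaled fractional Poincar\'e inequality on cubes with the explicit $(1-s)\lambda^{s-d}$ constant (Lemma~\ref{poincare}), a comparison of averages on adjacent cubes (Lemma~\ref{avg}), and crucially an Abel/telescoping summation in $k$ driven by the discrete identity behind \eqref{derivative of L fn}, namely $\mathcal{Y}_m(k)-\mathcal{Y}_m(k-1)\geq 2^{-(m+1)}\mathcal{Y}_1(k)\cdots\mathcal{Y}_{m-1}(k)\mathcal{Y}_m^2(k)$ (Lemmas~\ref{estimate Aki and Li}--\ref{large n ineq}); this is precisely where the factor $2^m$ originates, a fact your outline asserts but does not derive. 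One then needs the finite-overlap bookkeeping of the seminorms over the strips (estimates \eqref{se1}--\eqref{se2}) and a boundary-flattening plus partition-of-unity patching (Lemma~\ref{xi estimate}, Subsection~\ref{4.1}) to pass from the half-space model to a general Lipschitz $\Omega$. None of these ingredients, nor any substitute for them, appears in your proposal, so the central inequality remains unproved. Incidentally, no $s$-dependent weight $\Psi_{m,s}$ is needed: the paper keeps the weight $\mathcal{L}_1\cdots\mathcal{L}_{m-1}\mathcal{L}_m^2(\delta_\Omega/R)$ fixed and only puts $\delta_\Omega^s$ in the denominator, so monotone/pointwise convergence as $s\to1^-$ is immediate.

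Step 3 is likewise only a sketch, though the mechanism you point to is the right one. The paper's counterexample is cleaner and essentially computational: take $u(x)=u'(x')$ independent of the normal variable on a slab; by the identity \eqref{derivative of L fn} the weight integrates exactly to $\frac1R\bigl(\mathcal{L}_m(\tfrac1R)-\mathcal{L}_m(0)\bigr)\geq \frac{1}{(m+1)R^2}$, so the $m$-th term of the series is bounded below by $c\,\alpha^m/(m+1)$ and the series diverges for every $\alpha\geq 1$ while $[u]_{BV(\Omega)}$ is finite. Your fixed-point asymptotics $1-\mathcal{L}_m(t)\sim 2/m$ could be turned into the same kind of $1/m$ lower bound, but as written it is an announcement rather than a construction; if you pursue it, note that a single fixed $u\in BV(\Omega)$ not vanishing near a boundary portion suffices, and that the left-hand side of \eqref{series expansion main theorem} involves $|u-(u)_\Omega|$, so the test function must be arranged to stay away from its own average near that boundary portion.
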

 Following identity (see, ~\cite[Section 2, equation (2.1)]{tertikas2003})
\begin{equation}\label{derivative of L fn}
    \frac{d}{dt} \mathcal{L}_{m}(t) = \frac{1}{t} \mathcal{L}_{1}(t) \cdots \mathcal{L}_{m-1}(t) \mathcal{L}^{2}_{m}(t), \hspace{3mm} \text{where}  \ m \geq 2
\end{equation}
plays an important role in establishing the previous theorem among other key ingredients.
The constant appearing in the previous result may not be sharp. Since, the constant functions are in ~$BV(\Omega)$, this justifies the presence of ~$(u)_{\Omega}$ on the left hand side of ~\eqref{main in}. Also, ~$W^{1,1}(\Omega) \subset BV(\Omega)$ (see, ~\cite[Chapter 5]{gariepy2015measure}) and 
$\int_{\Omega} |\nabla u(x)| dx = [u]_{BV(\Omega)}, \hspace{3mm} \forall \ u \in W^{1,1}(\Omega).$
Therefore, Theorem ~\ref{th: main result} hold true for any ~$u \in W^{1,1}(\Omega)$.

\smallskip
Let ~$m \geq 1$, ~$\beta>1$ and $R$ be as in the previous theorem, then there exists a constant ~$C=C(\beta)>0$ (see, ~\eqref{corollary appen}, Appendix \ref{appen}) such that
\begin{equation}\label{beta condition}
  \mathcal{L}^{\beta}_{m}\left( \frac{\delta_{\Omega}(x)}{R} \right) \leq C   \mathcal{L}_{m} \left( \frac{\delta_{\Omega}(x)}{R} \right)   \mathcal{L}^{2}_{m+1} \left( \frac{\delta_{\Omega}(x)}{R} \right), \hspace{3mm} \forall \ x \in \Omega.
\end{equation}
Therefore, using Theorem ~\ref{th: main result} with ~$m+1 \geq 2 $ and the above inequality, we have the following immediate corollary:
\begin{cor}\label{cor 1}
     Let ~$\Omega$ be a bounded Lipschitz domain such that ~$\delta_{\Omega}(x) < R $ for all ~$x \in \Omega$, for some ~$R >0, ~ m \geq 1$ be a positive integer and ~$\beta>1$. Then there exists a constant ~$C=C(d, \Omega, \beta)>0$ such that  for all ~$u \in BV(\Omega)$,
     \begin{multline}
         \bigintsss_{\Omega} \frac{|u(x) - (u)_{\Omega}|}{\delta_{\Omega}(x)} \mathcal{L}_{1} \left( \frac{\delta_{\Omega}(x)}{R} \right) \cdots \mathcal{L}_{m-1} \left( \frac{\delta_{\Omega}(x)}{R} \right)  \mathcal{L}^{\beta}_{m} \left( \frac{\delta_{\Omega}(x)}{R} \right)    dx  \leq C 2^{m}[u]_{BV(\Omega)},
     \end{multline}
     with the convention that for ~$m=1$, \ ~$\mathcal{L}_{1} \left( \frac{\delta_{\Omega}(x)}{R} \right) \cdots \mathcal{L}_{m-1} \left( \frac{\delta_{\Omega}(x)}{R} \right)  \mathcal{L}^{\beta}_{m} \left( \frac{\delta_{\Omega}(x)}{R} \right) = \mathcal{L}_{1}^\beta\left( \frac{\delta_{\Omega}(x)}{R} \right).$
      Furthermore, for any ~$0<\alpha< \frac{1}{2}$, there exists a constant ~$C=C(d,\Omega, \beta)$ such that  for all ~$u \in BV(\Omega)$,
    \begin{multline}
     \sum_{m=2}^{\infty}  \alpha^{m} \int_{\Omega} \frac{|u(x) - (u)_{\Omega}|}{\delta_{\Omega}(x)}  \mathcal{L}_{1} \left( \frac{\delta_{\Omega}(x)}{R} \right) \cdots \mathcal{L}_{m-1} \left( \frac{\delta_{\Omega}(x)}{R} \right)  \mathcal{L}^{\beta}_{m} \left( \frac{\delta_{\Omega}(x)}{R} \right) dx \\  \leq C \left( \frac{4 \alpha^{2}}{1-2 \alpha} \right) [u]_{BV(\Omega)}.
    \end{multline}
   The above inequality fails whenever  ~$\alpha \geq 1$  or $0<\beta \leq 1$. 
\end{cor}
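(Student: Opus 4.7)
The plan is to reduce both upper bounds to Theorem~\ref{th: main result} by means of the pointwise inequality \eqref{beta condition}, so that the only substantive content of the corollary lies in the two failure assertions. Fix $m \geq 1$. Multiplying \eqref{beta condition} by $|u(x)-(u)_\Omega|/\delta_\Omega(x)$ and integrating, the left-hand side of the corollary at level $m$ is bounded by $C(\beta)$ times the left-hand side of \eqref{main in} at level $m+1 \geq 2$; since the latter is at most $C(d,\Omega)\,2^{m+1}\,[u]_{BV(\Omega)}$ by Theorem~\ref{th: main result}, this yields the claimed bound $C(d,\Omega,\beta)\,2^{m}\,[u]_{BV(\Omega)}$ after absorbing the extra factor of $2$ and of $C(\beta)$ into the constant. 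The case $m=1$ is covered by the same argument, since \eqref{beta condition} is valid for every $m \geq 1$.

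For the series bound, apply the same pointwise reduction inside every summand. Writing $I^{\mathrm{cor}}_m(u)$ for the integral in the corollary and $I^{\mathrm{thm}}_k(u)$ for the integral in \eqref{series expansion main theorem}, one has $I^{\mathrm{cor}}_m(u) \leq C(\beta)\,I^{\mathrm{thm}}_{m+1}(u)$, so after reindexing $k = m+1$
\[
\sum_{m=2}^{\infty}\alpha^{m}\,I^{\mathrm{cor}}_m(u) \;\leq\; C(\beta)\,\alpha^{-1}\sum_{k=3}^{\infty}\alpha^{k}\,I^{\mathrm{thm}}_k(u) \;\leq\; C(\beta)\,\alpha^{-1}\sum_{k=2}^{\infty}\alpha^{k}\,I^{\mathrm{thm}}_k(u),
\]
and \eqref{series expansion main theorem} closes the estimate. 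The extra $\alpha^{-1}$ produced by the reindex is harmless since the constant is not being tracked sharply.

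The failure for $\alpha \geq 1$ is inherited directly from Theorem~\ref{th: main result} by specializing to $\beta = 2$. The failure for $0 < \beta \leq 1$ is the substantive new step, and is the main obstacle I anticipate. The plan is to exhibit a single $u \in BV(\Omega)$, for instance the characteristic function $u = \chi_A$ of a closed ball $A \Subset \Omega$, for which $[u]_{BV(\Omega)} = \mathcal{H}^{d-1}(\partial A)$ is finite while the left-hand side is $+\infty$. Since $|u - (u)_\Omega|$ is bounded below by a positive constant on a tubular neighbourhood of $\partial\Omega$, the coarea formula applied to the Lipschitz function $\delta_\Omega$ reduces the claim to the one-dimensional divergence
\[
\int_{0}^{\eta}\frac{1}{t}\,\mathcal{L}_1\!\left(\tfrac{t}{R}\right)\cdots\mathcal{L}_{m-1}\!\left(\tfrac{t}{R}\right)\mathcal{L}_m^{\beta}\!\left(\tfrac{t}{R}\right)dt \;=\; +\infty \quad \text{for } 0 < \beta \leq 1.
\]
By identity~\eqref{derivative of L fn} the integrand is, up to a multiplicative constant, $\mathcal{L}_m^{\beta-2}\,\tfrac{d}{dt}\mathcal{L}_m(t/R)$; its antiderivative is $\tfrac{1}{\beta-1}\mathcal{L}_m^{\beta-1}$ for $\beta \neq 1$ and $\log \mathcal{L}_m$ for $\beta = 1$, and in either regime with $\beta \leq 1$ this blows up as $t \to 0^{+}$ since $\mathcal{L}_m(t) \to 0$. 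The remaining bookkeeping, namely that $\mathcal{H}^{d-1}(\{\delta_\Omega = t\})$ is bounded above and below for small $t$, is a standard consequence of the Lipschitz regularity of $\partial\Omega$ and completes the reduction.
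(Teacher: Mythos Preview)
Your reduction of both upper bounds to Theorem~\ref{th: main result} via the pointwise inequality~\eqref{beta condition} is exactly the paper's argument, and your treatment of the $0<\beta\le 1$ failure (a $BV$ function equal to a nonzero constant near $\partial\Omega$, plus the explicit antiderivative computation) is a fleshed-out version of the paper's one-line sketch.

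Two small corrections are worth making. First, in the series estimate your reindexing $k=m+1$ produces an extra factor $\alpha^{-1}$, and since the constant $C=C(d,\Omega,\beta)$ is required to be independent of $\alpha$, the resulting bound $C\,\tfrac{4\alpha}{1-2\alpha}$ does not match the stated $C\,\tfrac{4\alpha^2}{1-2\alpha}$. The clean route is not to go through~\eqref{series expansion main theorem} at all but simply to sum your already-established single-$m$ bounds $I^{\mathrm{cor}}_m(u)\le C(d,\Omega,\beta)\,2^m[u]_{BV(\Omega)}$ against $\alpha^m$, which gives $\sum_{m\ge 2}(2\alpha)^m=\tfrac{4\alpha^2}{1-2\alpha}$ on the nose; this is the same mechanism by which \eqref{series expansion main theorem} itself is obtained from~\eqref{main in}.

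Second, ``specializing to $\beta=2$'' establishes failure for $\alpha\ge 1$ only at that one value of $\beta$, whereas the claim is for every $\beta>1$. For $1<\beta\le 2$ the pointwise comparison $\mathcal{L}_m^\beta\ge\mathcal{L}_m^2$ immediately transfers the Section~\ref{failure} counterexample. For $\beta>2$ that comparison goes the wrong way; instead, repeating the computation of Section~\ref{failure} with the weight $\mathcal{L}_m^\beta$ and your antiderivative identity gives the $m$-th integral equal to $\tfrac{1}{\beta-1}\mathcal{L}_m(1/R)^{\beta-1}$ times a fixed positive factor, and since $\mathcal{L}_m(1/R)\to 1$ as $m\to\infty$ the terms of the series do not tend to zero when $\alpha\ge 1$.
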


The above corollary does not hold true when ~$\beta=1$, can be illustrated by considering a non constant function ~$u \in BV(\Omega)$ such that ~$u$ takes non zero constant near the boundary of ~$\Omega$, resulting in the left-hand side of the inequality in the corollary becoming infinite. This failure illustrates the optimality of the aforementioned corollary with regards to the choice of ~$\beta$. Furthermore, it can be easily verified that the constant in the corollary, denoted as ~$C= C(d, \Omega, \beta) \to \infty$ as ~$\beta \to 1$. This is because ~$C = C(\beta)$ defined in ~\eqref{beta condition} tends to ~$\infty$ as ~$\beta \to 1$ (see, ~\eqref{cor 5.1 1} with ~$\theta = \beta -1$, Appendix ~\ref{appen}). 

\smallskip

Let ~$\rho: (0,1) \to \mathbb{R}$ be a measurable function satisfying ~$\rho > 0, ~ \rho \left( t \right) \to 0$ as ~$ t \to 0$, $\beta >1$ and for some constant ~$C>0$,
\begin{equation}\label{rho condition}
    \mathcal{L}^{1 + \rho \left(  t \right)}_{m}\left( t \right) \leq  C  \mathcal{L}_{m} \left( t \right)   \mathcal{L}^{\beta}_{m+1} \left( t  \right), \hspace{3mm} \forall \ t \in (0,1).
\end{equation}
Using the definition of ~$\mathcal{L}_{m}$, for any ~$m \geq 1$ and define ~$\mathcal{L}_{0}(t) := t$, we obtain
\begin{equation*}
    \left(1-\ln \left( \mathcal{L}_{m}(t) \right) \right)^{\beta} \leq C \left(1- \ln \left( \mathcal{L}_{m-1}(t) \right) \right)^{\rho(t)}.
\end{equation*}
Then, taking $\ln$ both sides, we obtain
\begin{equation*}
 \rho^{*}(t) :=  \frac{\beta \ln \left( 1-\ln \left( \mathcal{L}_{m}(t) \right) \right) }{ \ln  \left(1- \ln \left( \mathcal{L}_{m-1}(t) \right) \right) } \leq \rho (t) + \frac{\ln C}{ \ln \left(1- \ln \left( \mathcal{L}_{m-1}(t) \right) \right) }.
\end{equation*}
Since, ~$\frac{\ln C}{ \ln \left( 1- \ln \left( \mathcal{L}_{m-1}(t) \right) \right) } \to 0$ as ~$t \to 0$. Therefore, it is not necessary to consider this term or we can assume ~$C=1$. We also observe that ~$\rho^{*} \left( t \right) \to 0$ as ~$t \to 0$ and 
\begin{equation*}
     \mathcal{L}^{1 + \rho^{*} \left(  t \right)}_{m}\left( t \right) = \mathcal{L}_{m} \left( t \right)   \mathcal{L}^{\beta}_{m+1} \left( t  \right), \hspace{3mm} \forall \ t \in (0,1).
\end{equation*}
This implies that the function ~$\rho^{*}$ is optimal in the inequality ~\eqref{rho condition} with the choice of ~$\rho$. Therefore, we again present the following corollary which is a consequence of previous Corollary:
\begin{cor}\label{cor 2}
     Let ~$\Omega$ be a bounded Lipschitz domain such that ~$\delta_{\Omega}(x) < R $ for all ~$x \in \Omega$, for some ~$R >0, ~ m \geq 1$ be positive integers and ~$\beta>1$. Then there exists a constant ~$C=C(d, \Omega , \beta)>0$ such that for all  ~$u \in BV(\Omega)$,
     \begin{multline}
         \bigintsss_{\Omega} \frac{|u(x) - (u)_{\Omega}|}{\delta_{\Omega}(x)} \mathcal{L}_{1} \left( \frac{\delta_{\Omega}(x)}{R} \right) \cdots \mathcal{L}_{m-1} \left( \frac{\delta_{\Omega}(x)}{R} \right)  \mathcal{L}^{1+ \rho^{*} \left( \frac{\delta_{\Omega}(x)}{R} \right)}_{m} \left( \frac{\delta_{\Omega}(x)}{R} \right)dx \\ \leq C 2^{m}[u]_{BV(\Omega)}.
     \end{multline}
      Furthermore, the above inequality fails when ~$0<\beta \leq 1$ in the definition of ~$\rho^{*}$.
\end{cor}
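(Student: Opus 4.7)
The plan is to reduce Corollary~\ref{cor 2} directly to Corollary~\ref{cor 1} by exploiting the algebraic identity that defines $\rho^{*}$. In the paragraph immediately preceding the corollary, the exponent $\rho^{*}$ is constructed (absorbing the constant $C$) so that
\begin{equation*}
    \mathcal{L}_{m}^{1+\rho^{*}(t)}(t) = \mathcal{L}_{m}(t)\, \mathcal{L}_{m+1}^{\beta}(t), \qquad t \in (0,1).
\end{equation*}
Setting $t = \delta_{\Omega}(x)/R$ in this identity, the weight appearing on the left-hand side of Corollary~\ref{cor 2} transforms into
\begin{equation*}
    \mathcal{L}_{1}\left(\tfrac{\delta_{\Omega}(x)}{R}\right)\cdots \mathcal{L}_{m-1}\left(\tfrac{\delta_{\Omega}(x)}{R}\right) \mathcal{L}_{m}\left(\tfrac{\delta_{\Omega}(x)}{R}\right) \mathcal{L}_{m+1}^{\beta}\left(\tfrac{\delta_{\Omega}(x)}{R}\right),
\end{equation*}
which is exactly the weight occurring in Corollary~\ref{cor 1} with its index replaced by $m+1$ and the same exponent $\beta>1$.

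I would then simply invoke Corollary~\ref{cor 1} at level $m+1$ with the given $\beta$, obtaining the upper bound $C(d,\Omega,\beta)\,2^{m+1}\,[u]_{BV(\Omega)}$. Absorbing the factor $2$ into the constant, permissible since it depends only on $d,\Omega,\beta$, yields the desired estimate $C\,2^{m}\,[u]_{BV(\Omega)}$. The only preparatory check needed is that $\rho^{*}$ is a genuine positive measurable function on $(0,1)$ with $\rho^{*}(t)\to 0$ as $t\to 0^{+}$; this is immediate from the explicit formula in terms of iterated logarithms recorded in the discussion before the statement.

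For the sharpness claim, I would argue by contradiction: if the inequality of Corollary~\ref{cor 2} continued to hold when $\rho^{*}$ is defined from some $\beta \in (0,1]$, then reversing the substitution above would yield the inequality of Corollary~\ref{cor 1} at level $m+1$ with exponent $\beta \leq 1$, contradicting the failure clause already asserted in that corollary. A concrete counterexample is furnished by any non-constant $u \in BV(\Omega)$ that equals a nonzero constant near $\partial\Omega$, as observed in the remark following Corollary~\ref{cor 1}. The main ``obstacle'' is in fact rather minor at this stage: all substantive analytic work has already been carried out in Theorem~\ref{th: main result} and Corollary~\ref{cor 1}, so Corollary~\ref{cor 2} reduces to a purely algebraic repackaging, whose real content is the identification of the optimal perturbation $1+\rho^{*}(t)$ of the exponent on $\mathcal{L}_{m}$ that produces one extra iterated logarithmic factor $\mathcal{L}_{m+1}^{\beta}$ in the weight while preserving the boundary Hardy estimate.
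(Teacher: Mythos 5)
Your proposal is correct and follows essentially the same route as the paper: the exact identity $\mathcal{L}_{m}^{1+\rho^{*}(t)}(t)=\mathcal{L}_{m}(t)\,\mathcal{L}_{m+1}^{\beta}(t)$ reduces the statement to Corollary~\ref{cor 1} at level $m+1$ (with the factor $2$ absorbed into the constant), which is precisely how the paper derives it. Your treatment of the failure for $0<\beta\leq 1$, via the function constant and nonzero near $\partial\Omega$, also matches the paper's argument.
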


We can illustrate the failure of the above corollary for ~$0<\beta \leq 1$ in the definition of ~$\rho^{*}$ by selecting a non-zero function ~$u \in BV(\Omega)$ that remains constant near the boundary ~$\partial \Omega$. This choice makes the left-hand side of the inequality in the above corollary to become infinite while the righ hand side is finite.

 \smallskip

We also establish a similar type of Hardy inequality in fractional Sobolev space when ~$p=1$. The next theorem that can be treated as an independent result in its own rights,  and serves as a crucial component in establishing Theorem ~\ref{th: main result}. In particular, we prove the following theorem: let ~$s \in (0,1)$ and ~$p \geq 1$, we define the Gagliardo fractional seminorm 
\begin{equation*}
    [u]_{W^{s,p}(\Omega)} := \left( \int_{\Omega} \int_{\Omega}  \frac{|u(x)-u(y)|^{p}}{|x-y|^{d+sp}}dxdy \right)^{\frac{1}{p}} .
\end{equation*}

\begin{theorem}
    \label{th: intermediate th} Let ~$\Omega$ be a bounded Lipschitz domain such that ~$\delta_{\Omega}(x) < R $ for all ~$x \in \Omega$, for some ~$R >0, ~ m \geq 2$ be positive integers and ~$\frac{1}{2} \leq s <1$. Then there exists a constant ~$C=C(d, \Omega)>0$ such that
\begin{multline}
    \int_{\Omega} \frac{|u(x)|}{\delta_\Omega^s(x)} \mathcal{L}_{1} \left( \frac{\delta_{\Omega}(x)}{R} \right) \cdots \mathcal{L}_{m-1} \left( \frac{\delta_{\Omega}(x)}{R} \right)  \mathcal{L}^{2}_{m} \left( \frac{\delta_{\Omega}(x)}{R} \right)dx \\ \leq C2^{m}(1-s)[u]_{W^{s,1}(\Omega)}  +  C 2^{m}||u||_{L^{1}(\Omega)},  \hspace{3mm}  \forall \ u \in W^{s,1}(\Omega).
\end{multline}
Furthermore, for any ~$0<  \alpha < \frac{1}{2}$, there exists a constant ~$C=C(d, \Omega)$ such that
\begin{multline}\label{inter}
  \sum_{m=2}^{\infty}  \alpha^{m} \int_{\Omega} \frac{|u(x)|}{\delta_\Omega^s(x)}  \mathcal{L}_{1} \left( \frac{\delta_{\Omega}(x)}{R} \right) \cdots \mathcal{L}_{m-1} \left( \frac{\delta_{\Omega}(x)}{R} \right)  \mathcal{L}^{2}_{m} \left( \frac{\delta_{\Omega}(x)}{R} \right)dx  \\ \leq C \left( \frac{4 \alpha^{2}}{1-2 \alpha} \right) \left\{ (1-s)[u]_{W^{s,1}(\Omega)}  +   ||u||_{L^{1}(\Omega)} \right\} , \hspace{3mm}  \forall \ u \in W^{s,1}(\Omega) .
\end{multline}
Also, the above inequality fails when ~$ \alpha \geq 1$. 
\end{theorem}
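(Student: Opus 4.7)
The plan is threefold: first, reduce the statement to a one-dimensional weighted fractional Hardy estimate via localization and flattening of $\partial\Omega$; second, establish that 1D estimate by combining the derivative identity~\eqref{derivative of L fn} with an iterative averaging that accumulates the constant $2^m$; third, obtain the series form and the failure for $\alpha\ge 1$ by direct manipulation of the main inequality.

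\textbf{Step 1 (Localization and reduction to one dimension).}
Decompose $\Omega$ into an interior region $\{\delta_\Omega\ge c\}$, where the weighted integrand is pointwise bounded by a constant multiple of $|u|$ and hence absorbed into $C\|u\|_{L^1}$, and a boundary neighborhood covered by finitely many Lipschitz charts. In each chart a bi-Lipschitz flattening transforms $\Omega\cap U$ into a subgraph $\{x_d>\phi(x')\}\cap U$ with $\phi$ Lipschitz and $\delta_\Omega(x)\asymp x_d-\phi(x')=:t$; both sides of the claimed inequality are invariant under this map up to constants depending only on $d$ and $\|\nabla\phi\|_\infty$. Setting $v(x',t):=u(x',\phi(x')+t)$ and using a tangential slicing argument (which controls the integral of 1D Gagliardo seminorms along fibres by $[u]_{W^{s,1}(\Omega)}$ on the cylindrical model) reduces the theorem to the one-dimensional inequality
\begin{equation*}
\int_0^H\frac{|v(t)|}{t^s}\mathcal{L}_1(t/R)\cdots\mathcal{L}_m^2(t/R)\,dt \leq C\,2^m(1-s)[v]_{W^{s,1}(0,H)} + C\,2^m\|v\|_{L^1(0,H)},
\end{equation*}
uniformly in $m\ge 2$ and $s\in[1/2,1)$, for $v\in W^{s,1}(0,H)$.

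\textbf{Step 2 (Proof of the 1D inequality; main obstacle).}
By \eqref{derivative of L fn} the weight equals $t\tfrac{d}{dt}\mathcal{L}_m(t/R)$, so the LHS rewrites as $\int_0^H|v(t)|\,t^{1-s}\tfrac{d}{dt}\mathcal{L}_m(t/R)\,dt$. I would then proceed by an iterative averaging: at each step use $|v(t)|\le \tfrac{1}{t}\int_t^{2t}|v(t)-v(\sigma)|\,d\sigma+\tfrac{1}{t}\int_t^{2t}|v(\sigma)|\,d\sigma$; the first (nonlocal) summand is bounded by a dyadic fragment of $[v]_{W^{s,1}}$ (since $|t-\sigma|\asymp t$), while the second (pointwise) summand is a fresh Hardy integral with $v$ evaluated one dyadic scale farther from the boundary. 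Iterating the displacement $m-1$ more times moves $v$ into the bulk—where it is absorbed into $\|v\|_{L^1}$—and replaces the factor $\mathcal{L}_{j-1}$ by $\mathcal{L}_j$ through \eqref{derivative of L fn} at every step, doubling the implicit constant and producing the factor $2^m$. Extracting the $(1-s)$ prefactor is the central technical issue: the integration-by-parts identity
\begin{equation*}
\int_0^H t^{1-s}\tfrac{d}{dt}\mathcal{L}_m(t/R)\,dt = H^{1-s}\mathcal{L}_m(H/R) - (1-s)\!\int_0^H t^{-s}\mathcal{L}_m(t/R)\,dt
\end{equation*}
produces an explicit $(1-s)$, and this must be balanced carefully against the dyadic sums over the shells $\{R/2^{k+1}<t<R/2^k\}$ that arise when pairing the weighted integrand with the fractional Gagliardo kernel. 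The hypothesis $s\ge 1/2$ is used precisely to keep those dyadic sums uniformly bounded in $s$; orchestrating this bookkeeping so that $(1-s)$ and $2^m$ appear simultaneously on the right-hand side is the principal difficulty.

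\textbf{Step 3 (Series estimate and failure).}
Multiplying the proved inequality by $\alpha^m$ and summing over $m\ge 2$ yields the series estimate, since $\sum_{m\ge 2}(2\alpha)^m=\frac{4\alpha^2}{1-2\alpha}$ for $\alpha<1/2$. For the failure at $\alpha\ge 1$, test with $u\equiv 1$: then $[u]_{W^{s,1}}=0$ and $\|u\|_{L^1}=|\Omega|$, so the series RHS is finite. By the co-area formula and the IBP identity above,
\begin{equation*}
\int_\Omega\frac{\mathcal{L}_1(\delta_\Omega/R)\cdots\mathcal{L}_m^2(\delta_\Omega/R)}{\delta_\Omega^s}\,dx \asymp R^{1-s}\,\mathcal{H}^{d-1}(\partial\Omega)
\end{equation*}
uniformly in $m\ge 2$; hence the LHS series $\sum_{m\ge 2}\alpha^m\cdot c$ diverges whenever $\alpha\ge 1$, contradicting the claimed bound and establishing optimality of the range $\alpha<1/2$.
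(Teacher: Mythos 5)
Your outline reproduces the paper's overall architecture (localize and flatten the boundary, prove a weighted estimate near a flat piece, sum the geometric series, exhibit a counterexample), but the two load-bearing components are not actually supplied. In Step 2 you yourself flag the simultaneous extraction of the factors $(1-s)$ and $2^m$ as ``the principal difficulty'' and leave the bookkeeping open, so the proposal stops exactly where the proof must start; moreover the mechanism you sketch is miscalibrated. The index $m$ counts iterated logarithms in the weight, not dyadic scales, so ``iterating the displacement $m-1$ more times'' does not move $v$ into the bulk (that requires on the order of $\log(1/t)$ dyadic steps), no step of the averaging ``replaces $\mathcal{L}_{j-1}$ by $\mathcal{L}_{j}$'' via \eqref{derivative of L fn}, and the integration-by-parts identity you write discards $|v|$ and cannot be run with $v$ present, since a $W^{s,1}$ function offers no derivative or boundary terms. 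In the paper the two factors have two separate, concrete sources: the $(1-s)$ is inherited from the fractional Poincar\'e inequality with the Bourgain--Brezis--Mironescu constant (Lemma~\ref{poincare}, applied on the dyadic boxes $A_k$, resp.\ $A^i_k$, together with Lemma~\ref{avg} to compare averages of adjacent boxes), while the $2^m$ comes from a discrete summation by parts over scales combined with the elementary inequality $\mathcal{Y}_{m}(k)-\mathcal{Y}_{m}(k-1)\geq 2^{-(m+1)}\mathcal{Y}_{1}(k)\cdots\mathcal{Y}_{m-1}(k)\mathcal{Y}^{2}_{m}(k)$ of Lemma~\ref{large n ineq}. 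Nothing equivalent to either ingredient appears in your outline. (A secondary gap: the reduction to a genuinely one-dimensional inequality needs the slicing estimate $\int_{x'}[u(x',\cdot)]_{W^{s,1}}\,dx'\leq C[u]_{W^{s,1}(\Omega)}$ with $C$ uniform for $s\in[\tfrac12,1)$ on the flattened chart; you assert it without proof, whereas the paper avoids slicing entirely by arguing with $d$-dimensional cubes.)

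The failure for $\alpha\geq 1$ is also not established by your Step 3. For $u\equiv 1$ and a fixed $s<1$, the claimed bound $\int_\Omega \delta_\Omega^{-s}\mathcal{L}_1\cdots\mathcal{L}_m^2\,dx \asymp R^{1-s}\mathcal{H}^{d-1}(\partial\Omega)$ uniformly in $m$ is false: since $1-\mathcal{L}_j(c)\sim 2/j$ for fixed $c\in(0,1)$, the product $\mathcal{L}_1(c)\cdots\mathcal{L}_{m-1}(c)\mathcal{L}_m^2(c)$ decays like $m^{-2}$, and the portion of the weight that migrates toward the boundary sits at tower-exponentially small $\delta_\Omega$, where the remaining factor $\delta_\Omega^{1-s}$ annihilates it; hence the $m$-th integral is $O(m^{-2})$ (with constants depending on $s,R,\Omega$). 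Consequently $\sum_m \alpha^m(\cdot)$ converges at $\alpha=1$, so your test function proves nothing at the endpoint $\alpha=1$ and only yields divergence for $\alpha>1$ after correcting the asymptotics. The paper proceeds differently: it takes $u(x',x_d)=u'(x')$ with $u'\in C_c^\infty$, observes that for the limiting weight $\delta_\Omega^{-1}$ the normal integral equals $\mathcal{L}_m(1/R)-\mathcal{L}_m(0)\geq \tfrac{1}{(m+1)R}$, so the corresponding series diverges for every $\alpha\geq1$, and then derives the failure of \eqref{inter} by contradiction, passing to the limit $s\to1$ with Fatou's lemma and Lemma~\ref{bv function}. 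In other words, the asserted failure is that no constant can work uniformly in $s\in[\tfrac12,1)$, which is detected precisely by the $s\to1$ limit and cannot be reached by a fixed-$s$ computation with $u\equiv1$.
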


The inequalities \eqref{beta condition} and \eqref{rho condition} can also be applied in the Theorem ~\ref{th: intermediate th} to obtain a similar type of corollaries which is obtained for Theorem ~\ref{th: main result} in Corollary ~\ref{cor 1} and Corollary ~\ref{cor 2}.

\smallskip

The result that  comes closer to our present work is that of Barbatis, Filippas and Tertikas in ~\cite{tertikas2003} where they obtained a series expansion for ~$L^{p}$ Hardy inequalities in ~$\mathbb{R}^d, ~ p>1$ involving distance function from the boundary of domain ~$\Omega \subset \mathbb{R}^d$.
For more literature on Hardy type inequalities we refer to ~\cite{adi2002, adi2009, AdiPurbPro2023, adimurthi2024fractional, adiTrudinger2024, tertikas2003, debdip2020, Brezis1997, Brezis19971, sandeep2008, lu2024, lu2022, lu20223, lulam, Nguyen2019} and to the works mentioned there in. \smallskip

The article is organized in the following way: In Section ~\ref{preliminaries}, we present preliminary lemmas and notations that will be utilized to prove Theorem ~\ref{th: main result} and Theorem ~\ref{th: intermediate th}. In section ~\ref{main results proof d=1}, we prove Theorem ~\ref{th: main result} and Theorem ~\ref{th: intermediate th} in dimension one.  Section ~\ref{proof of main result} contains the proof of the main theorem, Theorem ~\ref{th: main result} in dimension $d \geq 2$  which follows from Theorem~\ref{th: intermediate th} and utilizes results from Brezis, Bourgain, and Mironescu (see, Lemma~\ref{bv function}). In section ~\ref{failure}, the counterexample that shows \eqref{series expansion main theorem} in Theorem ~\ref{th: main result} and \eqref{inter} in Theorem \ref{th: intermediate th} fails for $\alpha \geq 1$ is provided.


\section{Notations and Preliminaries}\label{preliminaries} 
In this section, we introduce the notations and preliminary lemmas that will be used in proving Theorem ~\ref{th: main result} and Theorem ~\ref{th: intermediate th}.  All the lemmas proved in this section are essentially known results in literature. Throughout this article, we shall use the following notations: 
\begin{itemize}
\item ~$\mathbb{R}^{d} $ will denote the Euclidean space of dimension ~$d$.

\item ~$s$ will always be understood to be in ~$(0,1)$.
    \item we denote ~$|\Omega|$ the Lebesgue measure of ~$\Omega \subset \mathbb{R}^{d} $.
      \item for any ~$f, ~ g: \Omega ( \subset \mathbb{R}^d) \to \mathbb{R}$, we denote ~$f \sim g$ if there exists ~$C_{1}, ~ C_{2}>0$ such that ~$C_{1}g(x) \leq f(x) \leq C_{2} g(x)$ for all ~$x \in \Omega$.
    \item ~$C>0$ will denote a generic constant that may change from line to line. 
\end{itemize}

Let ~$\Omega \subset \mathbb{R}^d$ be an open set and ~$s \in (0,1)$. For any ~$p \in [1, \infty)$, define the fractional Sobolev space
\begin{equation*}
W^{s,p}(\Omega) := \Big\{    u \in L^{p}(\Omega) : \int_{\Omega} \int_{\Omega}  \frac{|u(x)-u(y)|^{p}}{|x-y|^{d+sp}}  dxdy < \infty \Big\},
\end{equation*}
endowed with the norm
\begin{equation*}
    ||u||_{W^{s,p}(\Omega)} := \left(  [u]^{p}_{W^{s,p}(\Omega)} + ||u||^{p}_{L^{p}(\Omega)} \right)^{\frac{1}{p}}.
\end{equation*}
Let ~$W^{s,1}_{0}(\Omega)$ denotes the completion of ~$C^{\infty}_{c}(\Omega)$ with respect to the norm ~$||.||_{W^{s,1}(\Omega)}$.

\smallskip

{\bf A bounded domain with Lipschitz boundary}: Let ~$\Omega$ be a bounded Lipschitz domain. Then for each ~$x \in \partial \Omega$ there exists ~$r'_{x}>0$, an isometry ~$T_{x}$ of ~$\mathbb{R}^d$ and a Lipschitz function ~$\phi_{x} : \mathbb{R}^{d-1} \to \mathbb{R}$ such that
\begin{equation*}
    T_{x}(\Omega) \cap B_{r'_{x}}(T_{x}(x)) = \{ \xi : \xi_{d} > \phi_{x}(\xi') \} \cap B_{r'_{x}}(T_{x}(x))  . 
\end{equation*}

\smallskip

The next lemma proves fractional Poincar\'e inequality with some specific constant (see, ~\cite[page no. 80 (``fact")]{bourgain2002}) for any cube of side length ~$\lambda>0$. A more general version of this lemma is also available in ~\cite[Corollary 1]{mazya2002}. This lemma is helpful in proving Lemma ~\ref{flat case 1 d=1} and Lemma ~\ref{flat case 1}.

\begin{lemma}\label{poincare}
Let ~$d \geq 1, ~ \frac{1}{2} \leq s <1$ and ~$\Omega_{\lambda}$ be any cube of side length ~$\lambda>0$ in ~$\mathbb{R}^{d}$. Then there exists a constant ~$C_{d, Poin}=C_{d, Poin}(d)>0 $ such that
\begin{equation}
\fint_{\Omega_{\lambda}} |u(x)- (u)_{\Omega_{\lambda}}| dx \leq C_{d, Poin} \lambda^{s-d} (1-s)  \int_{\Omega_{\lambda}} \int_{\Omega_{\lambda}} \frac{|u(x)-u(y)|}{|x-y|^{d+s}} dxdy, \hspace{3mm} \forall \ u \in W^{s,1}(\Omega_{\lambda}).
\end{equation}
\end{lemma}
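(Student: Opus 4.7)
The plan is to prove the inequality in three stages: reduce to the unit cube by scaling, pass to a symmetric double integral via Jensen's inequality, and then extract the factor $(1-s)$ via a weighted estimate in the style of Bourgain-Brezis-Mironescu. First, setting $v(x) := u(\lambda x)$ on the unit cube $Q := (0,1)^d$, one checks by direct change of variables that $(v)_Q = (u)_{\Omega_\lambda}$, that $\fint_Q |v - (v)_Q|\,dx = \fint_{\Omega_\lambda} |u - (u)_{\Omega_\lambda}|\,dy$, and that $[v]_{W^{s,1}(Q)} = \lambda^{s-d} [u]_{W^{s,1}(\Omega_\lambda)}$; plugging these into the target inequality reduces everything to the case $\lambda = 1$. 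Next, from the identity $u(x) - (u)_Q = \fint_Q (u(x) - u(y))\,dy$ and Jensen's inequality,
\[
\fint_Q |u - (u)_Q|\,dx \leq \fint_Q\fint_Q |u(x) - u(y)|\,dx\,dy = \int_Q\int_Q |u(x) - u(y)|\,dx\,dy,
\]
so the task is reduced to proving $\int_Q\int_Q |u(x) - u(y)|\,dx\,dy \leq C_d (1-s)\,[u]_{W^{s,1}(Q)}$.

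For this key bound I would change variables $h = y - x$ and introduce $F(h) := \int_{Q \cap (Q-h)} |u(x+h) - u(x)|\,dx$, so that the target becomes
\[
\int_{\{|h|_\infty \leq 1\}} F(h)\,dh \leq C_d (1-s) \int_{\{|h|_\infty \leq 1\}} \frac{F(h)}{|h|^{d+s}}\,dh,
\]
while $[u]_{W^{s,1}(Q)} = \int F(h)|h|^{-(d+s)}\,dh$ by Fubini. Passing to spherical coordinates $h = r\omega$ with $\omega \in S^{d-1}$ and $r \in (0, r_\omega]$, $r_\omega \leq \sqrt d$, this reduces to a family of one-dimensional weighted Hardy-type inequalities for the radial profiles $r \mapsto F(r\omega)$, exactly in the spirit of the ``fact'' on page~80 of \cite{bourgain2002}. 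The factor $(1-s)$ arises from the integral $\int_0^{\sqrt d} r^{-s}\,dr = (\sqrt d)^{1-s}/(1-s)$, which governs the contribution of the singular weight near $h = 0$.

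The main obstacle is precisely this last step: the crude pointwise estimate $F(h) \leq (\sqrt d)^{d+s} F(h)|h|^{-(d+s)}$, obtained by pulling out the maximum value of $|h|^{d+s}$ on the domain of integration, would give only a constant independent of $s$ and lose the $(1-s)$ factor. Recovering it requires genuinely exploiting the Sobolev structure of $F$, which forces $F(h)$ to vanish fast enough at $h = 0$ for the weighted integral on the right to control the unweighted one on the left with constant of order $(1-s)$ rather than $O(1)$. The hypothesis $s \geq 1/2$ is used to keep the relevant Hardy weights in a favorable integrability range, so that the constants appearing in the one-dimensional reduction remain uniform for $s \in [1/2, 1)$.
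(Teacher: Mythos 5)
Your scaling step is exactly the paper's (the paper applies the unit-cube inequality to $u(\lambda x)$ and changes variables $X=\lambda x$, $Y=\lambda y$), and the Jensen reduction is harmless, but after it you are left with proving $\int_Q\int_Q|u(x)-u(y)|\,dx\,dy\le C_d\,(1-s)\,[u]_{W^{s,1}(Q)}$ on the unit cube $Q$ -- and since $\int_Q\int_Q|u(x)-u(y)|\,dx\,dy$ is comparable, within a factor $2$, to $\int_Q|u-(u)_Q|\,dx$ (one direction is your Jensen step, the other is the triangle inequality through $(u)_Q$), this is not a reduction at all: it is the full strength of the lemma for $\lambda=1$. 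At precisely this point your argument stops. You introduce $F(h)$ and polar coordinates, note correctly that the crude bound $F(h)\le (\sqrt d)^{\,d+s}F(h)|h|^{-(d+s)}$ loses the factor $(1-s)$, and then assert that ``genuinely exploiting the Sobolev structure'' will recover it, without doing so. That missing step is the entire mathematical content of the statement: the uniformity of the constant $C_d(1-s)$ as $s\to 1^-$ is exactly the ``fact'' of Bourgain--Brezis--Mironescu (\cite{bourgain2002}, page 80), whose proof needs a chaining/averaging argument across dyadic scales (or the analysis of \cite{bourgain2001}); it is not delivered by one-dimensional weighted Hardy inequalities applied to the radial profiles of $F$. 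Your heuristic for the origin of $(1-s)$ also points the wrong way: $\int_0^{\sqrt d}r^{-s}\,dr=(\sqrt d)^{1-s}/(1-s)$ explains why $[u]_{W^{s,1}(Q)}$ blows up like $(1-s)^{-1}$ for fixed smooth $u$, i.e.\ it produces a factor $(1-s)^{-1}$ on the seminorm side, not a gain of $(1-s)$ on the oscillation side; and the claim that finiteness of the seminorm forces $F(h)$ to vanish at $h=0$ fast enough, in the quantitative form needed for a constant of order $(1-s)$, is unsubstantiated.

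For comparison, the paper does not prove this core inequality either: it quotes the unit-cube case from \cite{bourgain2002} and supplies only the dilation argument, which you also have. So your proposal would be complete, and essentially identical to the paper's proof, if you simply cited the BBM fact; as written, with the unit-cube case treated as something to be proved and then left unproved, it has a genuine gap at the decisive step. Incidentally, the restriction $s\ge \frac12$ plays no essential role here: the cited inequality holds for all $s\in(0,1)$ with a constant depending only on $d$, so your appeal to it to keep Hardy weights ``in a favorable integrability range'' is not where the difficulty lies.
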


\begin{proof}
Let ~$\Omega$ be any unit cube. Then from ~\cite[page no. 80 (``fact")]{bourgain2002}, we have
\begin{equation*}
    \fint_{\Omega} |u(x)- (u)_{\Omega}| dx \leq C_{d, Poin} (1-s)  \int_{\Omega} \int_{\Omega} \frac{|u(x)-u(y)|}{|x-y|^{d+s}} dxdy,
\end{equation*}
where ~$C_{d, Poin}$ is the best fractional  Poincar\'e constant. Let us apply the above inequality to ~$u(\lambda x)$ instead of ~$u(x)$. This gives
    \begin{equation*}
         \fint_{\Omega}  \Big|u(\lambda x)-\fint_{\Omega} u(\lambda x) dx \Big|  dx   \leq C_{d, Poin}(1-s) \int_{\Omega} \int_{\Omega} \frac{|u(\lambda x) - u(\lambda y)|}{|x-y|^{d+s}} dxdy . 
    \end{equation*}
    Using the fact
    \begin{equation*}
        \fint_{\Omega} u(\lambda x) dx = \fint_{\Omega_{\lambda}} u(x) dx,
    \end{equation*}
    we have
    \begin{equation*}
         \fint_{\Omega} |u(\lambda x)-(u)_{\Omega_{\lambda}}| \ dx   \leq C_{d, Poin}(1-s)  \int_{\Omega} \int_{\Omega} \frac{|u(\lambda x) - u(\lambda y)|}{|x-y|^{d+s}} dxdy   . 
    \end{equation*}
    By changing the variable ~$X=\lambda x$ and ~$Y= \lambda y$, we obtain 
    \begin{equation*}
        \fint_{\Omega_{\lambda}} |u(x)-(u)_{\Omega_{\lambda}}| dx   \leq C_{d, Poin}\lambda^{s-d} (1-s) \int_{\Omega_{\lambda}} \int_{\Omega_{\lambda}} \frac{|u(x)-u(y)|}{|x-y|^{d+s}} dxdy.
    \end{equation*}
    This finishes the proof of the lemma.
\end{proof}

\smallskip

The next  lemma is an well known result in ~\cite{bourgain2001}. It plays a very crucial role in establishing Theorem ~\ref{th: main result}.

\begin{lemma}\label{bv function}
Let ~$u \in L^{1}(\Omega) \cap BV(\Omega)$. Then there exists  positive constants ~$C_{1}, C_{2}>0$ depends only on ~$\Omega$ such that
\begin{multline}
   C_{1}[u]_{BV(\Omega)} \leq \liminf_{\epsilon \to 1^+} ~\epsilon  \int_{\Omega} \int_{\Omega}  \frac{|u(x)-u(y)|}{|x-y|^{d+1- \epsilon}} dxdy \\ \leq \limsup_{\epsilon \to 1^+} ~\epsilon \int_{\Omega} \int_{\Omega} \frac{|u(x)-u(y)|}{|x-y|^{d+1- \epsilon}} dxdy \leq C_{2} [u]_{BV(\Omega)}. 
\end{multline}
\end{lemma}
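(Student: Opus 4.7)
The lemma is the $p = 1$ Bourgain--Brezis--Mironescu characterization of $BV(\Omega)$ by fractional seminorms, where $s = 1 - \epsilon$ plays the role of the fractional exponent approaching $1$. The plan is to prove the two inequalities separately: the upper bound via an integral representation of $u(x) - u(y)$ followed by density, and the lower bound via mollification combined with the lower semicontinuity of the $BV$ seminorm.

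\textbf{Upper bound.} First take $u \in C^\infty(\overline{\Omega}) \cap W^{1,1}(\Omega)$, and let $\tilde u \in W^{1,1}(\mathbb{R}^d)$ be a compactly supported extension with $\|\nabla \tilde u\|_{L^1(\mathbb{R}^d)} \leq C(\Omega) \|\nabla u\|_{L^1(\Omega)}$; such an extension exists because $\partial\Omega$ is Lipschitz. Writing
$$u(x) - u(y) = \int_0^1 \nabla \tilde u\bigl(y + t(x-y)\bigr) \cdot (x-y)\, dt$$
and substituting into the double integral, the change of variables $h = x-y$, $z = y + t(x-y)$ (which has Jacobian one) yields
$$\int_\Omega \int_\Omega \frac{|u(x)-u(y)|}{|x-y|^{d+1-\epsilon}} \, dx\, dy \;\leq\; \int_0^1 \int_{\mathbb{R}^d} |\nabla \tilde u(z)|\, dz \int_{|h| \leq D} \frac{dh}{|h|^{d-\epsilon}}\, dt \;=\; \frac{C_d\, D^{\epsilon}}{\epsilon}\, \|\nabla \tilde u\|_{L^1},$$
where $D$ is the diameter of the support of $\tilde u$. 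Multiplying by $\epsilon$ cancels the singularity and produces a uniform bound by $C(\Omega)\, [u]_{BV(\Omega)}$. To pass from $W^{1,1}$ to general $u \in BV(\Omega)$, invoke the Anzellotti--Giaquinta approximation: pick $u_n \in C^\infty(\Omega) \cap W^{1,1}(\Omega)$ with $u_n \to u$ in $L^1(\Omega)$ and $\|\nabla u_n\|_{L^1(\Omega)} \to [u]_{BV(\Omega)}$, and apply Fatou on the left.

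\textbf{Lower bound.} Let $\tilde u$ be a $BV$-extension of $u$ and set $u_\delta = \tilde u \ast \rho_\delta$ for a standard mollifier $\rho_\delta$. Since $\int \nabla \rho_\delta = 0$,
$$\nabla u_\delta(x) = \int \bigl(\tilde u(y) - \tilde u(x)\bigr)\, \nabla \rho_\delta(x - y)\, dy,$$
so $|\nabla u_\delta(x)| \leq C\, \delta^{-d-1} \int_{B_\delta(x)} |\tilde u(y) - \tilde u(x)|\, dy$. The task is to convert this pointwise bound into a global bound in terms of the fractional seminorm. The trick is to average over a dyadic range of scales $\delta$ and couple the mollification scale with $\epsilon$ so that the natural $(1-s)$ normalization appears; alternatively, cover $\Omega$ by cubes of side $\sim \delta$ and apply the fractional Poincar\'e inequality of Lemma~\ref{poincare} on each cube. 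After integrating in $x$ over a shrunk domain $\Omega' \subset\subset \Omega$, one obtains
$$\|\nabla u_\delta\|_{L^1(\Omega')} \leq C\, \epsilon \int_\Omega \int_\Omega \frac{|u(x)-u(y)|}{|x-y|^{d+1-\epsilon}}\, dx\, dy + \text{(lower order in $\delta$)}.$$
The lower semicontinuity of $[\,\cdot\,]_{BV}$ under $L^1$-convergence then gives $[u]_{BV(\Omega)} \leq \liminf_{\delta \to 0} \|\nabla u_\delta\|_{L^1(\Omega)}$, closing the lower bound.

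\textbf{Main obstacle.} The principal difficulty is the lower bound: the naive mollification estimate carries an unfavourable $\delta^{s-1}$ factor that diverges as $\delta \to 0$, so the scale $\delta$ must be carefully coupled with $\epsilon = 1-s$ (or averaged over a dyadic range of scales) in a way that precisely recovers the $(1-s)$-normalization intrinsic to the BBM identity. This scale-matching, together with handling the interaction of the mollification with the Lipschitz boundary $\partial\Omega$, is the technical heart of the argument.
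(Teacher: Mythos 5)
The paper offers no argument of its own here --- it simply cites \cite[Corollary 5]{bourgain2001} (with D\'avila's sharper result, Remark~\ref{davilla remark}, noted separately) --- so you are attempting a self-contained proof of the BBM-type bounds, reading the mis-typed limit $\epsilon\to1^{+}$ as $\epsilon=1-s\to0^{+}$, which is indeed how the lemma is used later. Your upper bound is essentially right: the segment representation plus the change of variables $h=x-y$, $z=y+t(x-y)$ gives $\int_\Omega\int_\Omega |u(x)-u(y)|\,|x-y|^{-(d+1-\epsilon)}dxdy\le C_dD^{\epsilon}\epsilon^{-1}\|\nabla\tilde u\|_{L^{1}}$, and Fatou along the smooth approximation of Lemma~\ref{density} transfers the bound to $BV(\Omega)$. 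One slip: a compactly supported extension with $\|\nabla\tilde u\|_{L^{1}(\mathbb{R}^d)}\le C(\Omega)\|\nabla u\|_{L^{1}(\Omega)}$ does not exist in general (take $u$ a large constant; any compactly supported extension has positive gradient mass), since the Lipschitz extension operator controls only the full $W^{1,1}$ norm. The repair is easy --- replace $u$ by $u-(u)_\Omega$, which changes neither side, and use Poincar\'e to absorb the $L^{1}$ term --- but as written that step is false.

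The genuine gap is the lower bound, which is the only nontrivial half of the lemma. The displayed inequality $\|\nabla u_\delta\|_{L^{1}(\Omega')}\le C\,\epsilon\int_\Omega\int_\Omega|u(x)-u(y)|\,|x-y|^{-(d+1-\epsilon)}dxdy+\dots$ is precisely the crux and is never established; moreover, for a generic pair $(\delta,\epsilon)$ it is not available, since the pointwise mollification bound only yields the prefactor $\delta^{-\epsilon}$ (your $\delta^{s-1}$), which is of order $1$ rather than of order $\epsilon$ however you couple $\delta$ to $\epsilon$, and a bound without the factor $\epsilon$ is useless because the double integral itself is of size $\epsilon^{-1}[u]_{BV}$. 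What the dyadic idea actually buys is a \emph{selection} of a good scale, not an estimate for every $\delta$: with a mollifier supported in an annulus one gets $\|\nabla u_{2^{-j}}\|_{L^{1}(\Omega')}\le C\,2^{j\epsilon}I_j$, where $I_j$ is the contribution to the double integral from $|x-y|\sim2^{-j}$; summing over $J_0(\epsilon)\le j\le J_1(\epsilon)$ with $J_0(\epsilon)\to\infty$ slowly enough that $\sum_j2^{-j\epsilon}\gtrsim\epsilon^{-1}$, and pigeonholing, produces for each small $\epsilon$ a scale $\delta_\epsilon=2^{-j_\epsilon}\to0$ with $\|\nabla u_{\delta_\epsilon}\|_{L^{1}(\Omega')}\le C\,\epsilon\int_\Omega\int_\Omega|u(x)-u(y)|\,|x-y|^{-(d+1-\epsilon)}dxdy$; only then do lower semicontinuity on $\Omega'$ and the exhaustion $\Omega'\subset\subset\Omega$, $\Omega'\uparrow\Omega$ (needed anyway, since mollification is legitimate only at distance $>\delta$ from $\partial\Omega$) close the argument. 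You name this scale-matching as the technical heart but leave it unexecuted, and you must also guarantee that the selected scales tend to zero, otherwise lower semicontinuity gives nothing. Alternatively, the classical BBM route avoids the coupling altogether: show by Jensen that mollification does not increase the $\epsilon$-functional up to boundary errors, prove the lower bound directly for smooth functions via Taylor expansion and angular averaging, and pass to the limit; either route must appear explicitly for the proof to be complete.
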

\begin{proof}
    See \cite[corollary 5]{bourgain2001} for the proof.
\end{proof}
\begin{rem}\label{davilla remark}
    \normalfont D\'{a}vila in ~\cite{davila2002} established a stronger version of the above lemma. Infact,  D\'{a}vila in ~\cite[Theorem 1]{davila2002} proved that for a bounded Lipschitz domain ~$\Omega \subset \mathbb{R}^d$ and ~$u \in BV(\Omega)$, we have
    \begin{equation*}
        \lim_{s \to 1} (1-s) [u]_{W^{s,1}(\Omega)} = C_{BV,d} [u]_{BV(\Omega)},
    \end{equation*}
    where 
    \begin{equation*}
        C_{BV,d} = \int_{\mathbb{S}^{d-1}} |e.w| ds(w).
    \end{equation*}
    Here, ~$e \in \mathbb{R}^d$ denotes the unit vector and ~$ds$ is the surface measure on ~$\mathbb{S}^{d-1}$. But Lemma ~\ref{bv function} is sufficient to establish Theorem ~\ref{th: main result}.
\end{rem}

\smallskip

The following lemma establishing a connection to the average of ~$u$ over two disjoint sets. This technical step is very crucial in the proof of Lemma ~\ref{flat case 1 d=1} and Lemma ~\ref{flat case 1}.

\begin{lemma}\label{avg}
    Let ~$E$ and ~$F$ be measurable disjoint bounded set in ~$\mathbb{R}^d$ and $G$ be a cube of side length ~$\lambda>0$ such that ~$E \cup F \subset G$. Then 
    \begin{equation}
        |(u)_{E} - (u)_{F}| \leq C_{d, Poin} \lambda^{s-d} (1-s) \left( \frac{|G|}{\min \{ |E|, |F| \} }  \right) \int_{G} \int_{G} \frac{|u(x)-u(y)|}{|x-y|^{d+s}} dxdy,
    \end{equation}
    where ~$C_{d, Poin}$ is best fractional Poincar\'e constant for unit cube (the same ~$C_{d, Poin}$ as in Lemma \ref{poincare}).
\end{lemma}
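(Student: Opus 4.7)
The plan is to reduce to the fractional Poincar\'e inequality on the cube $G$ (Lemma \ref{poincare}) by triangulating through the mean $(u)_G$. Specifically, I would first write
\[
|(u)_E - (u)_F| \leq |(u)_E - (u)_G| + |(u)_G - (u)_F|,
\]
so the task reduces to controlling the oscillation of the averages of $u$ over a subset relative to the average over the enclosing cube.

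Next, for a generic measurable subset $H \subset G$ with $|H|>0$, I would use the identity $(u)_H - (u)_G = \frac{1}{|H|}\int_H (u(x) - (u)_G)\, dx$ to obtain
\[
\bigl|(u)_H - (u)_G\bigr| \leq \frac{1}{|H|}\int_H |u(x) - (u)_G|\, dx \leq \frac{|G|}{|H|}\fint_G |u(x) - (u)_G|\, dx.
\]
Applying this with $H = E$ and $H = F$ and then invoking the fractional Poincar\'e inequality of Lemma \ref{poincare} on the cube $G$ of side length $\lambda$ gives
\[
\bigl|(u)_H - (u)_G\bigr| \leq \frac{|G|}{|H|}\, C_{d,Poin}\, \lambda^{s-d}(1-s) \int_G \int_G \frac{|u(x)-u(y)|}{|x-y|^{d+s}}\, dx\, dy.
\]

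Finally, I would sum the two estimates and use $\tfrac{1}{|E|} + \tfrac{1}{|F|} \leq \tfrac{2}{\min\{|E|,|F|\}}$ to absorb the factor into the constant (or, alternatively, keep track of the $2$ and note it can be incorporated into $C_{d,Poin}$ as a generic constant, consistent with the article's convention). There is no real obstacle in this argument: each step is either a triangle inequality, an averaging inequality, or a direct application of the previously established fractional Poincar\'e inequality on a cube. The only point requiring mild care is ensuring the disjointness hypothesis on $E, F$ plays no role in the estimate itself; indeed it does not, and is presumably stated to make the quantity $|(u)_E - (u)_F|$ meaningful in the intended application in Lemma \ref{flat case 1 d=1} and Lemma \ref{flat case 1}.
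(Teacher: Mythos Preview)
Your approach is essentially the same as the paper's: triangulate through $(u)_G$, bound the deviation of the subset averages by the $L^1$ oscillation around $(u)_G$, and apply Lemma~\ref{poincare}. There is one point worth flagging, however.

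The paper, after writing $|(u)_E-(u)_G|+|(u)_F-(u)_G|\le \fint_E|u-(u)_G|+\fint_F|u-(u)_G|$, uses the common lower bound $\min\{|E|,|F|\}$ together with the \emph{disjointness} of $E$ and $F$ to get
\[
\frac{1}{\min\{|E|,|F|\}}\Bigl(\int_E|u-(u)_G|+\int_F|u-(u)_G|\Bigr)=\frac{1}{\min\{|E|,|F|\}}\int_{E\cup F}|u-(u)_G|\le \frac{|G|}{\min\{|E|,|F|\}}\fint_G|u-(u)_G|,
\]
and then applies Lemma~\ref{poincare}. This yields the constant $C_{d,Poin}$ exactly as stated. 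Your route --- bounding each term separately by $\tfrac{|G|}{|H|}\fint_G|u-(u)_G|$ and then summing via $\tfrac{1}{|E|}+\tfrac{1}{|F|}\le \tfrac{2}{\min\{|E|,|F|\}}$ --- produces $2\,C_{d,Poin}$ rather than $C_{d,Poin}$. Since in this lemma $C_{d,Poin}$ is the specific Poincar\'e constant from Lemma~\ref{poincare} (not a generic $C$), you cannot simply absorb the $2$. Relatedly, your remark that disjointness ``plays no role'' is not quite right: it is precisely the hypothesis that lets the paper avoid the extra factor of~$2$.
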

\begin{proof}
    Let us consider:
    \begin{multline*}
        |(u)_{E} - (u)_{F}| \leq |(u)_{E} - (u)_{G}|+ |(u)_{F} - (u)_{G}| \\  \leq  \fint_{E} | u(x) - (u)_{G} |   dx   +   \fint_{F} | u(x) - (u)_{G} |  dx  \leq \frac{1}{\min \{ |E|, |F| \} } \int_{E \cup F} | u(x) - (u)_{G} |  dx  . 
    \end{multline*}
    In the second inequality above we have used triangle inequality for the integrals. 
    Since, ~$E \cup F \subset G$ we have
    \begin{equation*}
        |(u)_{E} - (u)_{F}| \leq  \frac{1}{\min \{ |E|, |F| \} } \int_{G} | u(x) - (u)_{G} |  dx.
    \end{equation*}
    Using Lemma ~\ref{poincare}, we have
    \begin{multline*}
        |(u)_{E} - (u)_{F}| \leq \left( \frac{|G|}{\min \{ |E|, |F| \} }  \right) \fint_{G} | u(x) - (u)_{G} |  dx \\ \leq C_{d, Poin} \lambda^{s-d} (1-s) \left( \frac{|G|}{\min \{ |E|, |F| \} }  \right) \int_{G} \int_{G} \frac{|u(x)-u(y)|}{|x-y|^{d+s}} dxdy.
    \end{multline*}
    This finishes the proof of lemma.
\end{proof}

\smallskip

The next lemma establishes approximation result involving the functions of bounded variation ~$BV(\Omega)$, where ~$\Omega$ is a bounded Lipschitz domain. This lemma is useful in establishing Theorem ~\ref{th: main result}. Note that it doesn't show that ~$W^{s,1}(\Omega) \cap BV(\Omega)$ is dense in ~$BV(\Omega)$(as full norm convergence is not true).

\begin{lemma}\label{density}
    Let ~$\Omega \subset \mathbb{R}^d$ be a bounded Lipschitz domain and ~$u \in BV(\Omega)$. Then there exists a sequence of functions ~$\{ u_{n} \}_{n=1}^{\infty} \subset W^{s,1}(\Omega) \cap BV(\Omega)\cap C^\infty(\Omega)$ such that ~$u_{n} \to u$ in ~$L^{1}(\Omega)$ and 
\begin{equation}
 [u_{n}]_{BV(\Omega)} \to [u]_{BV(\Omega)}
     \hspace{3mm}  \text{as} \ n \to \infty. 
\end{equation}
\end{lemma}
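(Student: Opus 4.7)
The plan is to invoke the classical Anzellotti--Giaquinta strict approximation theorem for $BV$ functions (often thought of as the $BV$ analogue of the Meyers--Serrin theorem): for any open set $\Omega$ and any $u\in BV(\Omega)$ there exists a sequence $\{u_n\}\subset C^\infty(\Omega)\cap W^{1,1}(\Omega)$ such that $u_n\to u$ in $L^{1}(\Omega)$ and $\int_\Omega|\nabla u_n|\,dx\to [u]_{BV(\Omega)}$. The standard construction uses a locally finite cover of $\Omega$ by sets of the form $A_k=\{\delta_\Omega\in(t_{k+1},t_{k-1})\}$ together with a subordinate partition of unity $\{\eta_k\}$, and defines $u_n=\sum_k\rho_{\varepsilon_k}\ast(\eta_k u)$ where each mollification radius $\varepsilon_k$ is chosen small enough that $\mathrm{supp}\,\rho_{\varepsilon_k}\ast(\eta_k u)\Subset\Omega$ and the total error contribution is at most $2^{-k}/n$. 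This makes $u_n$ smooth on $\Omega$, uniformly close to $u$ in $L^1$, and forces the gradient norm to converge to the $BV$ seminorm; the lower semicontinuity of $[\cdot]_{BV(\Omega)}$ under $L^1$ convergence upgrades $\liminf$ to $\lim$. Since the Lipschitz hypothesis on $\partial\Omega$ is not needed for this step, I only use it to pass to $W^{s,1}$ below.

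Next I would verify that each $u_n$ sits in $W^{s,1}(\Omega)$. Because $u_n\in C^\infty(\Omega)$ and $\int_\Omega|\nabla u_n|\,dx<\infty$, we have $u_n\in W^{1,1}(\Omega)$. For a bounded Lipschitz domain the continuous embedding $W^{1,1}(\Omega)\hookrightarrow W^{s,1}(\Omega)$ holds for every $s\in(0,1)$: using an extension operator $E\colon W^{1,1}(\Omega)\to W^{1,1}(\mathbb{R}^d)$, one splits the Gagliardo double integral into $|x-y|\le 1$ and $|x-y|>1$; on the near region the bound
\begin{equation*}
\int\!\!\int_{|x-y|\le 1}\frac{|v(x)-v(y)|}{|x-y|^{d+s}}\,dx\,dy\le \int_{|h|\le 1}\frac{\|v(\cdot+h)-v\|_{L^1}}{|h|^{d+s}}\,dh\le \|\nabla v\|_{L^1}\int_{|h|\le 1}|h|^{1-d-s}\,dh,
\end{equation*}
is finite since $1-s>0$, while the far region is controlled by $\|v\|_{L^1}$. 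Thus $u_n\in W^{s,1}(\Omega)\cap W^{1,1}(\Omega)\cap C^\infty(\Omega)\subset W^{s,1}(\Omega)\cap BV(\Omega)\cap C^\infty(\Omega)$, as required.

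Finally, because the approximants of the first step satisfy $[u_n]_{BV(\Omega)}=\int_\Omega|\nabla u_n|\,dx$ for smooth $W^{1,1}$ functions, the convergence statement $[u_n]_{BV(\Omega)}\to [u]_{BV(\Omega)}$ is exactly the strict convergence produced by the Anzellotti--Giaquinta construction. The only delicate point is the Anzellotti--Giaquinta step itself: one must take care that the mollification radii $\varepsilon_k$ shrink fast enough near $\partial\Omega$ so the support remains interior, yet slowly enough that $\int_\Omega|\nabla u_n|$ does not overshoot $[u]_{BV(\Omega)}$ by more than $1/n$. This is handled by exploiting absolute continuity of the measure $|Du|$ on the overlaps of the covering $\{A_k\}$ and choosing the thresholds $t_k$ so that $|Du|(\{\delta_\Omega=t_k\})=0$. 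Once this is set up, lower semicontinuity forces $[u]_{BV(\Omega)}\le\liminf_n[u_n]_{BV(\Omega)}$, while the explicit estimate from the construction gives the matching upper bound, completing the proof. The embedding step is routine; the only real work, and hence the main obstacle, is the careful bookkeeping in the partition-of-unity mollification.
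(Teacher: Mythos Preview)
Your proof is correct and follows essentially the same approach as the paper: invoke the classical strict approximation theorem for $BV$ (the paper cites \cite[Theorem 5.3]{gariepy2015measure}, which is exactly the Anzellotti--Giaquinta result you describe), then observe that the approximants lie in $W^{1,1}(\Omega)$ and use the embedding $W^{1,1}(\Omega)\hookrightarrow W^{s,1}(\Omega)$ on bounded Lipschitz domains (the paper cites \cite[Proposition 2.2]{di2012hitchhikers}). The only difference is that you spell out the mollification construction and the embedding argument, whereas the paper simply quotes the references.
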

\begin{proof}
Let ~$u \in BV(\Omega)$, then from ~\cite[Theorem 5.3]{gariepy2015measure}, there exists a sequence of functions ~$\{ u_{n} \}_{n=1}^{\infty} \subset C^{\infty}(\Omega) \cap BV(\Omega)$ such that ~$u_{n} \to u$ in $L^{1}(\Omega)$ and 
\begin{equation*}
    [u_{n}]_{BV(\Omega)} \to [u]_{BV(\Omega)} \hspace{3mm} \text{as} \ n \to \infty. 
\end{equation*}
It is sufficient to prove ~$ u_{n}  \in W^{s,1}(\Omega)$, for all ~$n \in \mathbb{N}$. Since  ~$u_{n} \in C^{\infty}(\Omega) \cap BV(\Omega)$, using ~\cite[Chapter 1 (Example 1.2)]{giusti}, we have
\begin{equation*}
 \int_{\Omega} | \nabla u_{n} (x) | dx =   [u_{n}]_{BV(\Omega)} < \infty , \hspace{3mm} \forall \ n \in \mathbb{N},
\end{equation*}
which implies that ~$u_{n} \in W^{1,1}(\Omega)$ (see, ~\cite[Definition 4.2]{gariepy2015measure} for the definition of ~$W^{1,1}(\Omega)$). Using ~$W^{1,1}(\Omega) \subset W^{s,1}(\Omega)$ (see, ~\cite[Proposition 2.2]{di2012hitchhikers}), we have ~$u_{n} \in W^{s,1}(\Omega)$. Therefore, ~$u_{n} \in W^{s,1}(\Omega) \cap BV(\Omega)$ for all ~$n \in \mathbb{N}$.   
\end{proof}

\smallskip

The following lemma establishes a Poincar\'e type inequality on functions of bounded variation ~$BV(\Omega)$, where ~$\Omega$ is a bounded Lipschitz domain. This lemma is useful in establishing our main result.

\begin{lemma}\label{poincare in BV}
    Let ~$\Omega \subset \mathbb{R}^d$ be a bounded Lipschitz domain. Then there exists a constant ~$C_{BV, Poin}= C_{BV, Poin}(\Omega)>0$ such that
    \begin{equation}
        \int_{\Omega} |u(x) - (u)_{\Omega}| dx \leq C_{BV, Poin}[u]_{BV(\Omega)}, \hspace{3mm} \forall \ u \in BV(\Omega).
    \end{equation}
\end{lemma}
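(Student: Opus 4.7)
The plan is to first establish the inequality for smooth functions via the classical Poincar\'e--Wirtinger inequality on $W^{1,1}(\Omega)$, and then promote it to the full space $BV(\Omega)$ through the smooth approximation supplied by Lemma~\ref{density}.

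For the smooth case, I would invoke the standard Poincar\'e--Wirtinger inequality on bounded Lipschitz domains (e.g.\ from \cite{gariepy2015measure}): there exists a constant $C = C(\Omega) > 0$ such that
\[
\int_\Omega |v(x) - (v)_\Omega|\, dx \leq C \int_\Omega |\nabla v(x)|\, dx
\]
for every $v \in W^{1,1}(\Omega)$. Since $\int_\Omega |\nabla v|\, dx = [v]_{BV(\Omega)}$ whenever $v \in W^{1,1}(\Omega)$, this is exactly the desired inequality for such $v$. To extend it to a general $u \in BV(\Omega)$, Lemma~\ref{density} supplies a sequence $\{u_n\} \subset C^\infty(\Omega) \cap BV(\Omega)$, which (as observed in the proof of that lemma) actually lies in $W^{1,1}(\Omega)$, with $u_n \to u$ in $L^1(\Omega)$ and $[u_n]_{BV(\Omega)} \to [u]_{BV(\Omega)}$. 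Applying the smooth inequality to each $u_n$ gives
\[
\int_\Omega |u_n(x) - (u_n)_\Omega|\, dx \leq C[u_n]_{BV(\Omega)}.
\]
The $L^1$ convergence $u_n \to u$ forces $(u_n)_\Omega \to (u)_\Omega$, and a triangle-inequality argument then yields $|u_n - (u_n)_\Omega| \to |u - (u)_\Omega|$ in $L^1(\Omega)$, so letting $n \to \infty$ on both sides delivers the claim.

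The only nontrivial ingredient is the $W^{1,1}$ Poincar\'e--Wirtinger inequality, which is a classical fact on bounded Lipschitz domains. A self-contained alternative that avoids this citation is a compactness--contradiction argument: were the inequality false, one could find $u_n \in BV(\Omega)$ with $\|u_n - (u_n)_\Omega\|_{L^1} = 1$ and $[u_n]_{BV(\Omega)} \to 0$; setting $v_n := u_n - (u_n)_\Omega$ keeps $\|v_n\|_{BV(\Omega)}$ bounded, so the Rellich-type compact embedding $BV(\Omega) \hookrightarrow L^1(\Omega)$ on bounded Lipschitz domains extracts a subsequential $L^1$ limit $v$ with $\|v\|_{L^1} = 1$ and $(v)_\Omega = 0$, while lower semicontinuity of the $BV$ seminorm forces $[v]_{BV(\Omega)} = 0$. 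As $\Omega$ is connected, this would make $v$ constant and hence identically zero, contradicting $\|v\|_{L^1} = 1$.
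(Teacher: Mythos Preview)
Your argument is correct, but it differs from the paper's. The paper simply cites \cite[Theorem 3.2]{bergounioux2011} for the general bounded Lipschitz case, and then remarks that for cubes (the only case actually needed later) one can instead pass to the limit $s\to 1^{-}$ in the fractional Poincar\'e inequality of Lemma~\ref{poincare}, using D\'avila's result (Remark~\ref{davilla remark}) to identify $\lim_{s\to 1}(1-s)[u]_{W^{s,1}}$ with $C_{BV,d}[u]_{BV}$; this yields the explicit relation $C_{BV,Poin}=C_{d,Poin}C_{BV,d}$. Your route---classical $W^{1,1}$ Poincar\'e--Wirtinger plus the smooth approximation of Lemma~\ref{density}, or alternatively a Rellich compactness contradiction---is equally valid and arguably more elementary, since it avoids both the external citation and the fractional-to-local limiting machinery. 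The trade-off is that the paper's cube argument ties the BV Poincar\'e constant directly to the constants $C_{d,Poin}$ and $C_{BV,d}$ already appearing in the quantitative estimates of Section~\ref{main results proof d=1}, whereas your argument produces an unspecified constant depending on $\Omega$.
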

\begin{proof} 
See ~\cite[Theorem 3.2]{bergounioux2011} for the proof. If ~$\Omega$ is assumed to be a cube (which is an actual requirement for this article), then the lemma easily follows by choosing ~$p=1, s\rightarrow 1^{-}$ and as a combination of  Lemma ~\ref{poincare} and Remark ~\ref{davilla remark}. Further it can be easily derived that 
\begin{equation}\label{relation between constants}
   C_{BV, Poin} =   C_{d, Poin}C_{BV,d}. 
\end{equation}
\end{proof}

\smallskip

The next lemma establishes an inequality when any function ~$u \in W^{s,p}(\Omega)$ is multiplied by a test function. This lemma plays a crucial role in establishing Theorem ~\ref{th: intermediate th}. We denote by ~$C^{0, 1}(\Omega)$ the class of Lipschitz function ~$u : \Omega \to \mathbb{R}$ (see, ~\cite[Chapter 3, Definition 3.1]{gariepy2015measure}). 
\begin{lemma}\label{xi estimate}
    Let ~$\Omega$ be an open set in ~$\mathbb{R}^d$. Let us consider ~$u \in W^{s,p}(\Omega)$ and ~$\xi \in C^{0,1}(\Omega), ~ 0 \leq \xi \leq 1$. Then ~$\xi u \in W^{s,p}(\Omega)$ and for some constant ~$C=C(d,p,s,\Omega)>0$, 
    \begin{equation}
        || \xi u||_{W^{s,p}(\Omega)} \leq C ||u||_{W^{s,p}(\Omega)}.
    \end{equation}
\end{lemma}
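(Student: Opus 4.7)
The strategy is to estimate the $L^p$ piece and the Gagliardo seminorm piece of $\|\xi u\|_{W^{s,p}(\Omega)}$ separately. The $L^p$ estimate is immediate: since $0 \leq \xi \leq 1$ pointwise on $\Omega$, one has $\|\xi u\|_{L^p(\Omega)} \leq \|u\|_{L^p(\Omega)}$. The main work is to control the seminorm.

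For the seminorm I would use the pointwise splitting
\begin{equation*}
\xi(x)u(x) - \xi(y)u(y) = \xi(x)\bigl(u(x)-u(y)\bigr) + u(y)\bigl(\xi(x)-\xi(y)\bigr),
\end{equation*}
combined with the elementary inequality $(a+b)^p \leq 2^{p-1}(a^p+b^p)$. Dividing by $|x-y|^{d+sp}$ and integrating over $\Omega\times\Omega$ gives
\begin{equation*}
[\xi u]_{W^{s,p}(\Omega)}^p \leq 2^{p-1}\iint_{\Omega\times\Omega} \frac{|\xi(x)|^p |u(x)-u(y)|^p}{|x-y|^{d+sp}}\,dx\,dy + 2^{p-1}\iint_{\Omega\times\Omega} \frac{|u(y)|^p |\xi(x)-\xi(y)|^p}{|x-y|^{d+sp}}\,dx\,dy.
\end{equation*}
The first double integral is at most $2^{p-1}[u]_{W^{s,p}(\Omega)}^p$ because $|\xi(x)|^p \leq 1$.

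For the second double integral I would perform Fubini in $y$ and split the inner integral in $x$ at radius one. Let $L$ denote the Lipschitz seminorm of $\xi$ on $\Omega$. On $\{|x-y|\leq 1\}$ I use the Lipschitz bound $|\xi(x)-\xi(y)|\leq L|x-y|$, which yields an integrand bounded by $L^p|x-y|^{p(1-s)-d}$, integrable near the origin since $p(1-s)>0$. On $\{|x-y|>1\}$ I use $|\xi(x)-\xi(y)|\leq 2$, giving an integrand bounded by $2^p|x-y|^{-d-sp}$, integrable at infinity since $sp>0$. Both pieces are therefore bounded by a constant $C=C(d,p,s,L)$, independently of $y$, so
\begin{equation*}
\iint_{\Omega\times\Omega} \frac{|u(y)|^p|\xi(x)-\xi(y)|^p}{|x-y|^{d+sp}}\,dx\,dy \leq C \|u\|_{L^p(\Omega)}^p.
\end{equation*}
Combining these estimates with the trivial $L^p$ bound yields $\|\xi u\|_{W^{s,p}(\Omega)}\leq C\|u\|_{W^{s,p}(\Omega)}$, where the Lipschitz constant of $\xi$ is absorbed into $C(d,p,s,\Omega)$ in the applications of this lemma.

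\textbf{Main obstacle.} The only point that requires care is the decomposition of the inner integral in the second term at scale one. Using only the Lipschitz bound globally would produce an integrand that fails to be integrable at infinity when $\Omega$ is unbounded, while using only the sup-norm bound would fail to be integrable on the diagonal; the dyadic split exploits the right bound in each regime. Once this split is in place, the remaining computations are just two radial integrals.
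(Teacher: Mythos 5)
Your proof is correct and is essentially the standard argument: the paper itself only cites \cite[Lemma 5.3]{di2012hitchhikers}, and the proof there uses exactly your decomposition $\xi(x)u(x)-\xi(y)u(y)=\xi(x)(u(x)-u(y))+u(y)(\xi(x)-\xi(y))$ with the same split of the inner integral at scale one. Your remark that the constant actually depends on the Lipschitz seminorm of $\xi$ (absorbed into $C$ when the lemma is applied to a fixed partition of unity) is also the right reading of the statement.
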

\begin{proof}
    See ~\cite[Lemma 5.3]{di2012hitchhikers} for the proof. 
\end{proof}

\section{Proofs of the main results in dimension one}\label{main results proof d=1}
In this section, we present the proof of Theorem ~\ref{th: main result} and Theorem ~\ref{th: intermediate th} in dimension one. Establishing our main results in dimension one ~$(d=1)$ builds the foundation for extending the proof to higher dimensions and essentially all the major ideas can be explained with easy for this case. Extending them to higher dimensions is more technicality.  Also, we present quantitative estimate of the constants involved in this case. For simplicity, we first  establish the main results for the domain ~$\Omega= (0,2)$ and for any other general domain ($\Omega=(0,2D), D>0$) it can be obtained by translation and dilation of the domain ~$\Omega$.  \smallskip

The strategy is the following: The proof of Theorem \ref{th: main result} for ~$\Omega =(0,2D), ~ D>0$, is done in subsection ~\ref{3.3}, follows from the proof of Theorem ~\ref{th: intermediate th} for ~$\Omega =(0,2D), ~ D>0$  which is done in subsection ~\ref{3.2},.  The first part of the proof of Theorem ~\ref{th: intermediate th} for ~$\Omega =(0,2)$ will follow easily from Lemma ~\ref{flat case 1 d=1} which is done in subsection ~\ref{3.1}. Lemma ~\ref{estimate Aki and Li} and Lemma ~\ref{large n ineq} are basic inequalities that will be used to proof Lemma ~\ref{flat case 1 d=1}. 

\smallskip

For each ~$k \in \mathbb{Z}, ~ k \leq -1$ and ~$d=1$, define
\begin{equation*}
A_{k} := \{ x: 3^{k} \leq x < 3^{k+1} \}.
\end{equation*}
The next lemma establishes a basic inequality for each ~$A_{k}$. It gives a basic relation between for each ~$\mathcal{L}_{m}$ and ~$x \in A_{k}$. This lemma is helpful in proving Lemma ~\ref{flat case 1 d=1} and Lemma ~\ref{flat case 1}.

\begin{lemma}\label{estimate Aki and Li}
    For any ~$A_{k}, ~ R >1$ and ~$x \in A_{k}$, we have
    \begin{equation}
        \mathcal{L}_{1}\left( \frac{x}{R} \right) < \frac{1}{-k} =: \mathcal{Y}_{1}(k),
    \end{equation}
and for any ~$m \geq 2$,
\begin{equation}
    \mathcal{L}_{m} \left( \frac{x}{R} \right) < \frac{1}{1- \ln \left( \mathcal{Y}_{m-1}(k) \right)} =: \mathcal{Y}_{m}(k).
\end{equation}
\end{lemma}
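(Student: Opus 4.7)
The proof is a short induction on $m$ whose only real content is the monotonicity of $\mathcal{L}_1$ together with a single numerical estimate that uses $\ln 3>1$. First I would observe that $\mathcal{L}_1(t)=1/(1-\ln t)$ is strictly increasing on $(0,1]$, since $1-\ln t$ is a strictly decreasing function of $t$ on this interval, taking values in $[1,\infty)$. Consequently each iterate $\mathcal{L}_m=\mathcal{L}_1\circ\mathcal{L}_{m-1}$ is strictly increasing, and $\mathcal{L}_m$ maps $(0,1]$ into $(0,1]$, which makes every composition below well defined.

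For the base case $m=1$, let $x\in A_k$ with $k\leq -1$. Since $x<3^{k+1}$ and $R>1$, monotonicity of $\mathcal{L}_1$ gives
\[
\mathcal{L}_1\!\left(\frac{x}{R}\right)<\mathcal{L}_1\!\left(\frac{3^{k+1}}{R}\right)=\frac{1}{1-(k+1)\ln 3+\ln R}.
\]
Thus it is enough to check $1-(k+1)\ln 3+\ln R\geq -k$, which rearranges to $(k+1)(\ln 3-1)\leq \ln R$. The left-hand side is $\leq 0$ because $k+1\leq 0$ and $\ln 3-1>0$, while the right-hand side is strictly positive because $R>1$. This is the one and only estimate of substance in the whole lemma.

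For the inductive step $m\geq 2$, suppose $\mathcal{L}_{m-1}(x/R)<\mathcal{Y}_{m-1}(k)$. Since $\mathcal{Y}_{m-1}(k)\in(0,1]$ (an easy induction: $\mathcal{Y}_1(k)=1/(-k)\leq 1$ and $\mathcal{L}_1$ preserves $(0,1]$), applying the monotone function $\mathcal{L}_1$ to both sides yields
\[
\mathcal{L}_m\!\left(\frac{x}{R}\right)=\mathcal{L}_1\!\left(\mathcal{L}_{m-1}\!\left(\frac{x}{R}\right)\right)<\mathcal{L}_1(\mathcal{Y}_{m-1}(k))=\frac{1}{1-\ln(\mathcal{Y}_{m-1}(k))}=\mathcal{Y}_m(k),
\]
which is exactly the claim. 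I do not anticipate any genuine obstacle here; the whole statement is a clean bookkeeping lemma, with the only place the hypotheses $k\leq -1$ and $R>1$ are used being the sign check $(k+1)(\ln 3-1)\leq 0<\ln R$ in the base case.
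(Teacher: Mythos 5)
Your proof is correct and follows essentially the same route as the paper: the base case is the same estimate (using $x<3^{k+1}$, $R>1$, and $\ln 3>1$, which the paper phrases as $1-\ln(x/R)>1-(k+1)\ln 3\geq -k$), and the step to general $m$ is the same recursion via the monotonicity of $t\mapsto 1/(1-\ln t)$, which you merely make explicit as an induction. No gaps; the argument matches the paper's.
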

\begin{proof}
Let ~$x \in A_{k}$. Then ~$x < 3^{k+1}$ which implies
\begin{equation*}
    \ln \left( \frac{x}{R} \right) = \ln \left( x \right) - \ln R < (k+1) \ln3 - \ln R < (k+1) \ln 3.
\end{equation*}
Therefore, we have
\begin{equation*}
    1- \ln \left( \frac{x}{R} \right) > 1-(k+1) \ln 3 > 1-(k+1)= -k.
\end{equation*}
From the definition of ~$\mathcal{L}_{1}$, we obtain
\begin{equation*}
    \mathcal{L}_{1} \left( \frac{x}{R} \right) = \frac{1}{1-\ln \left( \frac{x}{R} \right)} < \frac{1}{-k}= \mathcal{Y}_{1}(k).
\end{equation*}
Using the above inequality, we have
\begin{equation*}
    \ln \left( \mathcal{L}_{1} \left( \frac{x}{R} \right) \right) < \ln \left( \mathcal{Y}_{1}(k) \right).
\end{equation*}
So, from the definition of ~$\mathcal{L}_{2}(x)$, we have
\begin{equation*}
    \mathcal{L}_{2} \left( \frac{x}{R} \right) = \mathcal{L}_{1} \left( \mathcal{L}_{1}\left( \frac{x}{R} \right) \right) =  \frac{1}{1-\ln \left( \mathcal{L}_{1} \left( \frac{x}{R} \right) \right)} < \frac{1}{1- \ln \left( \mathcal{Y}_{1}(k) \right)} = \mathcal{Y}_{2}(k).
\end{equation*}
Therefore, recursively, we obtain for any ~$m \geq 2$,
\begin{equation*}
    \mathcal{L}_{m} \left( \frac{x}{R} \right) < \frac{1}{1- \ln \left( \mathcal{Y}_{m-1}(k) \right)} =\mathcal{Y}_{m}(k).
\end{equation*}
  This proves the lemma.
\end{proof}

The next lemma establishes a basic inequality for ~$k \leq -1$. This lemma is helpful in the proof of Lemma ~\ref{flat case 1 d=1} and Lemma ~\ref{flat case 1}.

\begin{lemma}\label{large n ineq}
For all ~$k \leq -1$, we have
     \begin{equation}
       \mathcal{Y}_{m}(k) - \mathcal{Y}_{m}(k-1) \geq \frac{\mathcal{Y}_{1}(k) \cdots \mathcal{Y}_{m-1}(k) \mathcal{Y}^{2}_{m}(k)}{2^{m+1}}  .
    \end{equation}
\end{lemma}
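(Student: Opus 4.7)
The identity~\eqref{derivative of L fn} together with the observation $\mathcal{Y}_m(k) = \mathcal{L}_m(e^{k+1})$ reduces the lemma to the mean value theorem and a crude monotonicity argument. I would first verify by induction on $m$, using $\mathcal{Y}_m(k) = \mathcal{L}_1(\mathcal{Y}_{m-1}(k))$ and the recursive definition of $\mathcal{L}_m$, the identification $\mathcal{Y}_m(k) = \mathcal{L}_m(e^{k+1})$ for every $m \geq 1$ and integer $k \leq -1$. Consequently the smooth function $F(x) := \mathcal{L}_m(e^{x+1})$ agrees with $\mathcal{Y}_m(\cdot)$ on integers, and by the chain rule together with~\eqref{derivative of L fn},
\begin{equation*}
F'(x) \;=\; e^{x+1}\,\mathcal{L}_m'(e^{x+1}) \;=\; \prod_{j=1}^{m-1}\mathcal{L}_j(e^{x+1})\cdot\mathcal{L}_m^{2}(e^{x+1}).
\end{equation*}

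Applying the mean value theorem on $[k-1,k]$ gives some $\eta \in (k-1,k)$ with $F(k)-F(k-1) = F'(\eta)$. Since each $\mathcal{L}_j$ is strictly increasing on $(0,1)$ and $e^{\eta+1} > e^k$, monotonicity then yields
\begin{equation*}
F'(\eta) \;\geq\; \prod_{j=1}^{m-1}\mathcal{L}_j(e^k)\cdot\mathcal{L}_m^{2}(e^k) \;=\; \prod_{j=1}^{m-1}\mathcal{Y}_j(k-1)\cdot\mathcal{Y}_m^{2}(k-1).
\end{equation*}

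The crux of the argument is then the auxiliary claim that $\mathcal{Y}_j(k-1) \geq \tfrac{1}{2}\mathcal{Y}_j(k)$ for every $j \geq 1$ and every integer $k \leq -1$. For $j=1$ this reduces to $\tfrac{|k|}{|k|+1} \geq \tfrac{1}{2}$, which holds since $|k| \geq 1$. For $j \geq 2$ I would argue by induction on $j$: assuming the bound for $j-1$, taking logarithms gives $1 - \ln\mathcal{Y}_{j-1}(k-1) \leq 1-\ln\mathcal{Y}_{j-1}(k) + \ln 2$, and since $\mathcal{Y}_{j-1}(k)\leq 1$ (an easy induction using that $\mathcal{L}_1$ maps $(0,1]$ into itself), one has $1 - \ln\mathcal{Y}_{j-1}(k) \geq 1 > \ln 2$; this forces $1 - \ln\mathcal{Y}_{j-1}(k-1) \leq 2\bigl(1-\ln\mathcal{Y}_{j-1}(k)\bigr)$, and inverting yields $\mathcal{Y}_j(k-1) \geq \tfrac{1}{2}\mathcal{Y}_j(k)$.

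Combining these estimates,
\begin{equation*}
\mathcal{Y}_m(k) - \mathcal{Y}_m(k-1) \;\geq\; \frac{1}{2^{m-1}}\prod_{j=1}^{m-1}\mathcal{Y}_j(k)\cdot\frac{1}{4}\,\mathcal{Y}_m^{2}(k) \;=\; \frac{\mathcal{Y}_1(k)\cdots\mathcal{Y}_{m-1}(k)\,\mathcal{Y}_m^{2}(k)}{2^{m+1}},
\end{equation*}
which is the desired inequality. I expect the only delicate point to be the inductive claim $\mathcal{Y}_j(k-1) \geq \tfrac{1}{2}\mathcal{Y}_j(k)$; once one recognises that $1 - \ln\mathcal{Y}_{j-1}(k) \geq 1 > \ln 2$ for $k \leq -1$, the required logarithmic manipulation is immediate.
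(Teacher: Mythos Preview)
Your proof is correct and follows essentially the same route as the paper's: both apply the mean value theorem to a smooth interpolation of $\mathcal{Y}_m$, use the derivative identity~\eqref{derivative of L fn}, and close with the inductive ``halving'' estimate $\mathcal{Y}_j(\cdot)\geq\tfrac12\mathcal{Y}_j(k)$ based on $1-\ln\mathcal{Y}_{j-1}(k)\geq 1>\ln 2$. Your identification $\mathcal{Y}_m(k)=\mathcal{L}_m(e^{k+1})$ is a clean packaging that makes the derivative formula immediate, whereas the paper extends $\mathcal{Y}_m$ to real arguments directly and re-derives the product form of the derivative; otherwise the two arguments are the same.
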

\begin{proof}
 Let ~$f: [0, 1] \to [0, \infty)$ be a differentiable function on ~$\left( 0,1 \right)$ such that
 \begin{equation*}
     f(x) =\mathcal{Y}_{m}(k-1+x), \hspace{3mm} \text{for some} \ k \leq -1.
 \end{equation*} 
 By mean value theorem, there exist ~$\gamma \in (0, 1)$ such that 
    \begin{equation*}
        f'(\gamma) = f(1)- f(0) =   \mathcal{Y}_{m}(k) - \mathcal{Y}_{m}(k-1).
    \end{equation*}
    Also,
    \begin{equation*}
        f'(\gamma) = \mathcal{Y}_{1}(k-1+ \gamma) \cdots \mathcal{Y}_{m-1}(k-1+ \gamma) \mathcal{Y}^{2}_{m}(k-1+ \gamma)
    \end{equation*}
follows easily from direct computations or using an induction argument.
Therefore, combining the above two inequalities, we have 
\begin{equation}\label{eqn Ym}
      \mathcal{Y}_{m}(k) - \mathcal{Y}_{m}(k-1) = \mathcal{Y}_{1}(k-1+ \gamma) \cdots \mathcal{Y}_{m-1}(k-1+ \gamma) \mathcal{Y}^{2}_{m}(k-1+ \gamma).
\end{equation} 
Since, ~$\left( 1 - \frac{1}{k}+ \frac{\gamma}{k} \right) \leq 2$ for all ~$k \leq -1$. Therefore, we have
\begin{equation*}
    \mathcal{Y}_{1}(k-1+ \gamma) = \frac{1}{-k+1-\gamma} = \frac{1}{(-k) \left(1- \frac{1}{k} + \frac{\gamma}{k} \right)} \geq  \frac{1}{2(-k)} = \frac{\mathcal{Y}_{1}(k)}{2} .
\end{equation*}
From the above inequality, we have ~$ \ln \left( \mathcal{Y}_{1}(k-1+ \gamma) \right) \geq \ln \left( \frac{\mathcal{Y}_{1}(k)}{2} \right) $. Using this in the definition of ~$\mathcal{Y}_{2}$, we obtain 
\begin{equation*}
    \mathcal{Y}_{2}(k-1+ \gamma) = \frac{1}{1- \ln \left( \mathcal{Y}_{1}(k-1+ \gamma) \right)} \geq   \frac{1}{1- \ln \left( \frac{\mathcal{Y}_{1}(k)}{2} \right)} .
\end{equation*}
But, we have
\begin{equation*}
    \frac{1}{1- \ln \left( \frac{\mathcal{Y}_{1}(k)}{2} \right)} = \frac{1}{1- \ln \left( \mathcal{Y}_{1}(k) \right) - \ln \left( \frac{1}{2} \right)}  > \frac{1}{2} \left( \frac{1}{1- \ln \left( \mathcal{Y}_{1}(k) \right) }  \right) = \frac{\mathcal{Y}_{2}(k)}{2}.
\end{equation*}
Here, we have used ~$2 \left( 1- \ln \left( \mathcal{Y}_{1}(k) \right)  \right) >  1- \ln \left( \mathcal{Y}_{1}(k) \right) - \ln \left( \frac{1}{2} \right)$. Therefore, combining the above two inequalities, we have
\begin{equation*}
    \mathcal{Y}_{2}(k-1+ \gamma) \geq \frac{\mathcal{Y}_{2}(k)}{2}. 
\end{equation*}
From the definition of ~$\mathcal{Y}_{m}$ and using recursively, we obtain
\begin{equation*}
    \mathcal{Y}_{m}(k-1+ \gamma) \geq \frac{\mathcal{Y}_{m}(k)}{2}, \hspace{3mm} \forall \ m \geq 2.
\end{equation*}
Hence, from ~\eqref{eqn Ym}, we have
\begin{equation*}
     \mathcal{Y}_{m}(k) - \mathcal{Y}_{m}(k-1) \geq \frac{\mathcal{Y}_{1}(k) \cdots \mathcal{Y}_{m-1}(k) \mathcal{Y}^{2}_{m}(k)}{2^{m+1}}  .
\end{equation*}
This proves the lemma.
\end{proof}

The next lemma is the main step towards the proof of Theorem ~\ref{th: intermediate th} for ~$\Omega =(0,2)$. Once the next lemma is established, Theorem ~\ref{th: intermediate th} for  ~$\Omega =(0,2)$ is fairly an easy consequence.

\begin{lemma}\label{flat case 1 d=1}
Let ~$R >1, ~ m  \geq 2$ be positive integers and ~$\frac{1}{2} \leq s < 1$. Then there exists a constant ~$C_{1, Poin}>0$ such that for all ~$u\in W^{s,1}((0,1))$
    \begin{multline}  
       \int_{0}^{1} \frac{|u(x)|}{x^{s}} \mathcal{L}_{1} \left( \frac{x}{R} \right) \cdots \mathcal{L}_{m-1} \left( \frac{x}{R} \right) \mathcal{L}^{2}_{m} \left( \frac{x}{R} \right)dx \\ \leq  C_{1, Poin}  ( 2^{3s+m} + 2^s)(1-s)[u]_{W^{s,1}((0,1))}+  2^{m+1} 3^{s}||u||_{L^{1}((0,1))}.
    \end{multline}
\end{lemma}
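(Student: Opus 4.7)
The plan is to decompose $(0,1)=\bigsqcup_{k\leq -1} A_k$ into the dyadic shells $A_k=[3^k,3^{k+1})$ and treat each shell using the pointwise bounds of Lemma~\ref{estimate Aki and Li}. For $x\in A_k$ these yield $x^{-s}\leq 3^{-ks}$ and $\mathcal{L}_j(x/R)\leq \mathcal{Y}_j(k)$ for $1\leq j\leq m$, so
\begin{equation*}
\int_{A_k}\frac{|u(x)|}{x^s}\mathcal{L}_1(x/R)\cdots\mathcal{L}_m^2(x/R)\,dx\leq 3^{-ks}\mathcal{Y}_1(k)\cdots\mathcal{Y}_m^2(k)\int_{A_k}|u(x)|\,dx.
\end{equation*}
The triangle inequality $|u|\leq |u-(u)_{A_k}|+|(u)_{A_k}|$ then splits the problem into an oscillation contribution and an average contribution, which I handle differently.

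For the oscillation contribution I shall use the trivial bound $\mathcal{Y}_1(k)\cdots \mathcal{Y}_m^2(k)\leq 1$ (each factor is at most $1$ for $k\leq -1$) and apply Lemma~\ref{poincare} on the interval $A_k$ of length $\lambda_k=2\cdot 3^k$, giving $\int_{A_k}|u-(u)_{A_k}|\,dx\leq C_{1,Poin}(2\cdot 3^k)^s(1-s)[u]_{W^{s,1}(A_k)}$. Since $3^{-ks}\lambda_k^s=2^s$ is independent of $k$ and the shells are disjoint, $\sum_k [u]_{W^{s,1}(A_k)}\leq [u]_{W^{s,1}((0,1))}$, giving a contribution of order $C_{1,Poin}\cdot 2^s(1-s)[u]_{W^{s,1}((0,1))}$. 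This is the source of the $2^s$ term in the final bound.

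For the average contribution the key idea is to apply Lemma~\ref{large n ineq} to convert the $\mathcal{Y}$-product into a telescoping increment: $\mathcal{Y}_1(k)\cdots\mathcal{Y}_m^2(k)\leq 2^{m+1}\bigl(\mathcal{Y}_m(k)-\mathcal{Y}_m(k-1)\bigr)$. After multiplication by $|A_k|=2\cdot 3^k$ and $3^{-ks}$ the sum becomes $2^{m+2}\sum_k 3^{k(1-s)}(\mathcal{Y}_m(k)-\mathcal{Y}_m(k-1))|(u)_{A_k}|$. I then telescope back to the outermost shell $A_{-1}$ via
\begin{equation*}
(u)_{A_k}=(u)_{A_{-1}}+\sum_{j=k+1}^{-1}\bigl((u)_{A_{j-1}}-(u)_{A_j}\bigr),
\end{equation*}
controlling each increment by Lemma~\ref{avg} applied to $G_j=[3^{j-1},3^{j+1})$, for which $|G_j|=8\cdot 3^{j-1}$ and $|G_j|/\min\{|A_{j-1}|,|A_j|\}=4$. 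The anchor $|(u)_{A_{-1}}|\leq \tfrac{3}{2}\|u\|_{L^1((0,1))}$ combined with $\sum_{k\leq -1}3^{k(1-s)}(\mathcal{Y}_m(k)-\mathcal{Y}_m(k-1))\leq 3^{s-1}$ (maximal weight at $k=-1$ times the telescoping sum $\mathcal{Y}_m(-1)=1$) produces exactly the $2^{m+1}3^s\|u\|_{L^1((0,1))}$ term.

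The hardest step is the Fubini interchange for the increment contribution. After swapping $\sum_k\sum_{j=k+1}^{-1}=\sum_{j\leq -1}\sum_{k\leq j-1}$, the inner telescoping estimate $\sum_{k\leq j-1}3^{k(1-s)}(\mathcal{Y}_m(k)-\mathcal{Y}_m(k-1))\leq 3^{(j-1)(1-s)}$ must be shown to cancel exactly the factor $(3^{j-1})^{s-1}$ produced by Lemma~\ref{avg}, reducing the double sum to a bare sum $\sum_j[u]_{W^{s,1}(G_j)}$. Since each shell $A_\ell$ lies in at most two neighbouring $G_j$'s, the overlap bound $\sum_j[u]_{W^{s,1}(G_j)}\leq 2[u]_{W^{s,1}((0,1))}$ yields a contribution of order $C_{1,Poin}\cdot 2^{m+3s}(1-s)[u]_{W^{s,1}((0,1))}$. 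Summing the oscillation and average contributions and absorbing the remaining numerical constants into $C_{1,Poin}$ then produces the stated inequality $C_{1,Poin}(2^{3s+m}+2^s)(1-s)[u]_{W^{s,1}((0,1))}+2^{m+1}3^s\|u\|_{L^1((0,1))}$.
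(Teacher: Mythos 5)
Your argument is correct and rests on the same skeleton as the paper's proof: the shell decomposition $A_k=[3^k,3^{k+1})$, the split into oscillation and average parts, Lemma~\ref{poincare} on each shell, Lemma~\ref{avg} for adjacent averages, Lemma~\ref{large n ineq} for the $\mathcal{Y}$-weights, and the even/odd overlap count giving $\sum_j [u]_{W^{s,1}(G_j)}\leq 2[u]_{W^{s,1}((0,1))}$. Where you genuinely differ is the direction of the telescoping: the paper first reduces to $u\in C^{1}_{c}((0,1))$ via $W^{s,1}=W^{s,1}_{0}$ and then performs an Abel summation in $k$ with the weights $\mathcal{Y}_m(k)$, using compact support to discard the boundary term $3^{\ell(1-s)}\mathcal{Y}_m(\ell)|(u)_{A_\ell}|$ at the inner end; you instead apply Lemma~\ref{large n ineq} forward to replace $\mathcal{Y}_1(k)\cdots\mathcal{Y}^2_m(k)$ by $2^{m+1}\bigl(\mathcal{Y}_m(k)-\mathcal{Y}_m(k-1)\bigr)$, telescope the averages outward to the anchor $A_{-1}$, and swap the two sums by nonnegativity. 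This is the dual rearrangement of the same double sum, and it buys a small simplification: no density step and no compact-support requirement, the estimate holding directly for every $u\in W^{s,1}((0,1))$, with the $L^1$ term entering only through $|(u)_{A_{-1}}|\leq \tfrac{3}{2}\|u\|_{L^1}$, exactly reproducing $2^{m+1}3^s\|u\|_{L^1((0,1))}$. One bookkeeping point: keeping the factor $|G_j|/\min\{|A_{j-1}|,|A_j|\}=4$ from Lemma~\ref{avg} (which the paper's own displayed application of that lemma omits), your seminorm coefficient is really $2^{3s+m+2}$ rather than $2^{3s+m}$; since the statement only asserts the existence of some constant $C_{1,Poin}$, absorbing this factor of $4$ into the constant is legitimate, but you should say so explicitly rather than hiding it in ``of order.''
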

\begin{proof}
It is well known that ~$W^{s,1}((0,1)) = W^{s,1}_{0}((0,1))$ (see, ~\cite[Theorem 6.78]{leonibook}, as ~$sp <1$ with ~$p=1$ here). Therefore, it is sufficient to establish the following lemma for any ~$u \in C^{1}_{c}((0,1))$.
Let ~$u \in C^{1}_{c}((0,1))$ and fix any  ~$A_{k}$. Applying Lemma ~\ref{poincare} with  ~$ \frac{1}{2} \leq s<1, ~ \Omega = (\frac{1}{2} , \frac{3}{2})$, and ~$ \lambda = 2 \times 3^k$, we have
\begin{equation*}
    \fint_{A_{k}} |u(x)-(u)_{A_{k}}|  dx \leq C_{1, Poin}2^{s-1} 3^{k(s-1)} (1-s) [u]_{W^{s,1}(A_{k})},
\end{equation*}
where ~$C_{1, Poin}>0$ is as in Lemma ~\ref{poincare}. For ~$x \in A_{k}$ one has ~$ \frac{1}{x} \leq \frac{1}{3^{k}}$ which implies
\begin{multline*}
    \int_{A_{k}} \frac{|u(x)|}{x^{s}}  dx \leq\frac{1}{3^{k s}} \int_{A_{k}} |u(x)-(u)_{A_{k}} + (u)_{A_{k}}| dx \\
    \leq \frac{1}{3^{k s}} \int_{A_{k}} |u(x)-(u)_{A_{k}}|  dx + \frac{1}{3^{k s}} \int_{A_{k}} |(u)_{A)_{k}}|dx.
    \end{multline*}
  Now, using the previous inequality, we obtain
    \begin{equation*}
    \begin{split}
    \int_{A_{k}} \frac{|u(x)|}{x^{s}_{d}}  dx & \leq  \frac{|A_{k}|}{3^{k s}} \fint_{A_{k}}  |u(x)-(u)_{A_{k}}|  dx +  \frac{|A_{k}|}{3^{k s}} |(u)_{A_{k}}| \\
    & \leq C_{1, Poin}2^{s-1} \frac{2 \times 3^{k}}{3^{k s}} 3^{k(s-1)} (1-s) [u]_{W^{s,1}(A_{k})} +  2 \times 3^{k(1-s)} |(u)_{A_{k}}| \\ &
     \leq C_{1, Poin}2^s (1-s) [u]_{W^{s,1}(A_{k})} +  2 \times 3^{k(1-s)} |(u)_{A_{k}}| .
    \end{split}
\end{equation*}   
From Lemma ~\ref{estimate Aki and Li} and using ~$\mathcal{Y}_{1}(k) \cdots \mathcal{Y}_{m-1}(k) \mathcal{Y}^{2}_{m}(k) \leq 1$, we have
\begin{multline*}
    \int_{A_{k}} \frac{|u(x)|}{x^{s}} \mathcal{L}_{1} \left( \frac{x}{R} \right) \cdots \mathcal{L}_{m-1} \left( \frac{x}{R} \right) \mathcal{L}^{2}_{m} \left( \frac{x}{R} \right)   dx \leq C_{1, Poin}2^{s}(1-s) [u]_{W^{s,1}(A_{k})} \\ + 2\times3^{k(1-s)} \mathcal{Y}_{1}(k) \cdots \mathcal{Y}_{m-1}(k) \mathcal{Y}^{2}_{m}(k)  |(u)_{A_{k}}| .
\end{multline*}
Summing the above inequality from ~$k=\ell \in \mathbb{Z}^{-}$ to ~$-1$, we get
\begin{multline}\label{flat d=1 1}
\sum_{k= \ell}^{-1} \int_{A_{k}} \frac{|u(x)|}{x^{s}} \mathcal{L}_{1} \left( \frac{x}{R} \right) \cdots \mathcal{L}_{m-1} \left( \frac{x}{R} \right) \mathcal{L}^{2}_{m} \left( \frac{x}{R} \right)  dx    \leq  C_{1, Poin}2^s (1-s)  \sum_{k= \ell}^{-1} [u]_{W^{s,1}(A_{k})} \\ + 2\sum_{k= \ell}^{-1}  3^{k(1-s)} \mathcal{Y}_{1}(k) \cdots \mathcal{Y}_{m-1}(k) \mathcal{Y}^{2}_{m}(k) |(u)_{A_{k}}|  .
\end{multline} 
Independently, using triangle inequality, we have 
\begin{equation*}
    |(u)_{A_{k}}| \leq  |(u)_{A_{k+1}}| + |(u)_{A_{k}} - (u)_{A_{k+1}}|.
\end{equation*}
Since, ~$A_{k} \cup A_{k+1}$ is a interval of length ~$2 \times 3^{k+1}+ 2 \times 3^{k}$. Therefore, using Lemma ~\ref{avg} with ~$G= A_{k} \cup A_{k+1}$ and ~$\lambda= 8 \times 3^{k}$, we have  
\begin{multline*}
    |(u)_{A_{k}}| \leq  |(u)_{A_{k+1}}| + C_{1, Poin} (8^{s-1}3^{k(s-1)}) (1-s) [u]_{W^{s,1}(A_{k} \cup A_{k+1})}  \\ \leq |(u)_{A_{k+1}}| + C_{1, Poin} 2^{3s-3}3^{k(s-1)} (1-s) [u]_{W^{s,1}(A_{k} \cup A_{k+1})}   .\end{multline*}
Multiplying the above inequality by ~$3^{k(1-s)}$ and using the trivial estimate that ~$3^{1-s}>1$, we get
\begin{equation*}
   3^{k(1-s)} |(u)_{A_{k}}|  \leq   3^{(k+1)(1-s)}|(u)_{A_{k+1}}| + C_{1, Poin} 2^{3s-3} (1-s) [u]_{W^{s,1}(A_{k} \cup A_{k+1})}.
\end{equation*}
 Multiplying the above inequality with ~$\mathcal{Y}_{m}(k)$ and using ~$\mathcal{Y}_{m}(k) \leq 1$ for all ~$k \leq -1$, we obtain
\begin{equation*}
    3^{k(1-s)} \mathcal{Y}_{m}(k)   |(u)_{A_{k}}| \leq 3^{(k+1)(1-s)} \mathcal{Y}_{m}(k)  |(u)_{A_{k+1}}| + C_{1, Poin} 2^{3s-3}(1-s)[u]_{W^{s,1}(A_{k} \cup A_{k+1})}.
\end{equation*}
Summing the above inequality from ~$k=\ell \in \mathbb{Z}^{-}$ to ~$-2$, we get
\begin{multline*}
 \sum_{k=\ell}^{-2} 3^{k(1-s)} \mathcal{Y}_{m}(k)   |(u)_{A_{k}}| \leq \sum_{k=\ell}^{-2} 3^{(k+1)(1-s)} \mathcal{Y}_{m}(k)  |(u)_{A_{k+1}}| \\  + C_{1, Poin} 2^{3s-3}(1-s) \sum_{k=\ell}^{-2} [u]_{W^{s,1}(A_{k} \cup A_{k+1})} .
\end{multline*}
By changing sides, rearranging, and re-indexing, we get
\begin{multline*}
  3^{\ell(1-s)} \mathcal{Y}_{m}(\ell) |(u)_{A_{\ell}}|  +  \sum_{k=\ell+1}^{-2} 3^{k(1-s)} \left\{ \mathcal{Y}_{m}(k) - \mathcal{Y}_{m}(k-1) \right\} |(u)_{A_{k}}| \\ \leq 3^{(-1)(1-s)} \mathcal{Y}_{m}(-2) |(u)_{A_{-1}}| 
    +  C_{1, Poin} 2^{3s-3}(1-s) \sum_{k=\ell}^{-2} [u]_{W^{s,1}(A_{k} \cup A_{k+1})}.
\end{multline*}
Using the asymptotics (see, Lemma ~\ref{large n ineq})
\begin{equation*}
    \mathcal{Y}_{m}(k) - \mathcal{Y}_{m}(k-1) \geq \frac{\mathcal{Y}_{1}(k) \cdots \mathcal{Y}_{m-1}(k) \mathcal{Y}^{2}_{m}(k)}{2^{m+1}} ,
\end{equation*}
choose ~$-\ell$ large enough such that ~$|(u)_{A_{\ell}}|=0$ (as ~$u$ is assumed to be compactly supported), we obtain
\begin{multline*}
    \sum_{k=\ell}^{-2} \frac{3^{k(1-s)}}{2^{m+1}}  \mathcal{Y}_{1}(k) \cdots \mathcal{Y}_{m-1}(k) \mathcal{Y}^{2}_{m}(k) |(u)_{A_{k}}|\\ \leq \mathcal{Y}_{m}(-2) 3^{s-1} |(u)_{A_{-1}}|  +  C_{1, Poin} 2^{3s-3}(1-s) \sum_{k=\ell}^{-2} [u]_{W^{s,1}(A_{k} \cup A_{k+1})}.
\end{multline*}
 Adding  ~$\frac{3^{(-1)(1-s)}}{2^{m+1}}  \mathcal{Y}_{1}(-1) \cdots \mathcal{Y}_{m-1}(-1) \mathcal{Y}^{2}_{m}(-1) |(u)_{A_{-1}}| 
$
on both sides of the above inequality, we obtain
\begin{equation*}
\begin{split}
    \sum_{k=\ell}^{-1} \frac{3^{k(1-s)}}{2^{m+1}}  \mathcal{Y}_{1}(k) \cdots & \mathcal{Y}_{m-1}(k) \mathcal{Y}^{2}_{m}(k) |(u)_{A_{k}}| \\ &\leq  3^{s-1}\left\{\mathcal{Y}_{m}(-2) +\frac{1}{2^{m+1}}  \mathcal{Y}_{1}(-1) \cdots \mathcal{Y}_{m}(-1) \mathcal{Y}^{2}_{m}(-1)   \right\} |(u)_{A_{-1}}|   \\
    & \hspace{5mm} +  C_{1, Poin} 2^{3s-3}(1-s) \sum_{k=\ell}^{-2} [u]_{W^{s,1}(A_{k} \cup A_{k+1})} \\
    &\leq
    3^{s-1}|(u)_{A_{-1}}|   +  C_{1, Poin} 2^{3s-3}(1-s) \sum_{k=\ell}^{-2} [u]_{W^{s,1}(A_{k} \cup A_{k+1})}.
    \end{split}
\end{equation*}
In the last inequality we have used Lemma ~\ref{large n ineq} with ~$k=-1$.
Therefore, we have
\begin{multline}\label{flat d=1 lemma 2}
      \sum_{k=\ell}^{-1} 3^{k(1-s)} \mathcal{Y}_{1}(k) \cdots \mathcal{Y}_{m-1}(k) \mathcal{Y}^{2}_{m}(k) |(u)_{A_{k}}| \\\leq   2^{m+1} 3^{s-1}|(u)_{A_{-1}}|   +  C_{1, Poin} 2^{3s-3}2^{m+1}(1-s) \sum_{k=\ell}^{-2} [u]_{W^{s,1}(A_{k} \cup A_{k+1})}.
\end{multline}
Combining ~\eqref{flat d=1 1}, ~\eqref{flat d=1 lemma 2} together ~\eqref{se1} (see,  Appendix ~\ref{appen}) with ~$d=1$, yields
\begin{equation*}
\begin{split}
    \sum_{k=\ell}^{-1} \int_{A_{k}} \frac{|u(x)|}{x^{s}} \mathcal{L}_{1} & \left( \frac{x}{R} \right)  \cdots \mathcal{L}_{m-1} \left( \frac{x}{R} \right) \mathcal{L}^{2}_{m} \left( \frac{x}{R} \right)   dx  \leq   C_{1, Poin}2^s (1-s)  [u]_{W^{s,1}((0,1))}  \\ & \hspace{3mm} +
  2\left\{    2^{m+1} 3^{s-1}|(u)_{A_{-1}}|   +  C_{1, Poin} 2^{3s-3}2^{m+1}(1-s) \sum_{k=\ell}^{-2} [u]_{W^{s,1}(A_{k} \cup A_{k+1})}\right\} \\
  &\leq C_{1, Poin}  ( 2^{3s+m} + 2^s)(1-s)[u]_{W^{s,1}((0,1))}+  2^{m+2} 3^{s-1}|(u)_{A_{-1}}|.
  \end{split}
\end{equation*}
Using  ~$|(u)_{A_{-1}}| \leq (3/2) ||u||_{L^{1}((0,1))}$ 
we have
\begin{multline*}
    \int_{0}^{1} \frac{|u(x)|}{x^{s}} \mathcal{L}_{1} \left( \frac{x}{R} \right) \cdots \mathcal{L}_{m-1} \left( \frac{x}{R} \right) \mathcal{L}^{2}_{m} \left( \frac{x}{R} \right)dx \\ \leq  C_{1, Poin}  ( 2^{3s+m} + 2^s)(1-s)[u]_{W^{s,1}((0,1))}+  2^{m+1} 3^{s}||u||_{L^{1}((0,1))}.
\end{multline*}
This proves the lemma.
\end{proof}

\subsection{Proof Theorem ~\ref{th: intermediate th} for ~\texorpdfstring{~$\Omega =(0,2)$}{omega in dim 1}}\label{3.1}

 From Lemma ~\ref{flat case 1 d=1}, ~$m \geq 2, ~ R>1$ and ~$u \in W^{s,1}((0,2))$, we obtain
\begin{equation}\label{bv hardy d=1 - 1}
\begin{split}
      \int_{0}^{1} \frac{|u(x)|}{x^{s}} \mathcal{L}_{1} \left( \frac{x}{R} \right) \cdots \mathcal{L}_{m-1} \left( \frac{x}{R} \right)  \mathcal{L}^{2}_{m}\left( \frac{x}{R} \right)dx  \leq & C_{1, Poin}   \left( 2^{3s+m} + 2^s\right)(1-s)[u]_{W^{s,1}((0,1))}\nonumber\\ 
      &  \hspace{3mm} +  2^{m+1} 3^{s}||u||_{L^{1}((0,1))}.
\end{split}
\end{equation}
In the previous step we have used the fact that restriction of any ~$W^{s,1}((0,2))$ function on the the interval ~$(0,1)$ is again a ~$W^{s,1}((0,1))$ function.
Now since
\begin{equation*}
    \delta_{(0,2)}(x) = \begin{cases}
        x, & 0<x <1 \\
        2-x, & 1 \leq x <2,
    \end{cases}
\end{equation*}
 we have
\begin{equation*}
\begin{split}
     \int_{0}^{2} \frac{|u(x)|}{\delta^{s}_{(0,2)}(x)}  \mathcal{L}_{1} & \left( \frac{\delta_{(0,2)}(x)}{R} \right) \cdots \mathcal{L}_{m-1} \left( \frac{\delta_{(0,2)}(x)}{R} \right) \mathcal{L}^{2}_{m} \left( \frac{\delta_{(0,2)}(x)}{R} \right)dx \\
     &= \int_{0}^{1} \frac{|u(x)|}{x^{s}} \mathcal{L}_{1} \left( \frac{x}{R} \right) \cdots \mathcal{L}_{m-1} \left( \frac{x}{R} \right)  \mathcal{L}^{2}_{m}\left( \frac{x}{R}\right)dx \\ 
     & \hspace{5mm} + \int_{1}^{2} \frac{|u(x)|}{(2-x)^{s}} \mathcal{L}_{1} \left( \frac{2-x}{R} \right) \cdots \mathcal{L}_{m-1} \left( \frac{2-x}{R} \right)  \mathcal{L}^{2}_{m}\left( \frac{2-x}{R} \right)dx.
     \end{split}
\end{equation*}
Using change of variable ~$2-x=z$ in the last of integral of above equation and using ~\eqref{bv hardy d=1 - 1}, we obtain
\begin{multline*}
    \int_{0}^{2} \frac{|u(x)|}{\delta^{s}_{(0,2)}(x)} \mathcal{L}_{1} \left( \frac{\delta_{(0,2)}(x)}{R} \right) \cdots \mathcal{L}_{m-1} \left( \frac{\delta_{(0,2)}(x)}{R} \right) \mathcal{L}^{2}_{m} \left( \frac{\delta_{(0,2)}(x)}{R} \right)dx \\ \leq C_{1, Poin}\left( 2^{3s+m+1} + 2^{s+1} \right)(1-s)[u]_{W^{s,1}((0,2))}+  2^{m+2} 3^{s}||u||_{L^{1}((0,2))}.
\end{multline*}
This finishes the proof of the first part. Now let ~$\alpha<\frac{1}{2}$ and summing from ~$m=2$ to ~$\infty$, we have
\begin{equation*}
\begin{split} 
  \sum_{m=2}^{\infty}  & \alpha^{m}  \int_{0}^{2} \frac{|u(x)|}{\delta^{s}_{(0,2)}(x)}   \mathcal{L}_{1} \left( \frac{\delta_{(0,2)}(x)}{R} \right) \cdots \mathcal{L}_{m-1} \left( \frac{\delta_{(0,2)}(x)}{R} \right) \mathcal{L}^{2}_{m} \left( \frac{\delta_{(0,2)}(x)}{R} \right)  dx \\  & \leq \left\{ 2^{3s+1}\frac{4\alpha^2}{1-2\alpha} +2^{s+1}\frac{\alpha^2}{1-\alpha}\right\} C_{1, Poin}(1-s)   [u]_{W^{s,1}((0,2))} + 2^{2} \times 3^s\frac{4\alpha^2}{1-2\alpha} ||u||_{L^{1}((0,2))}  \\
  & =: \mathcal{A}(s,\alpha)C_{1, Poin}(1-s)   [u]_{W^{s,1}((0,2))} + \mathcal{B}(s,\alpha)||u||_{L^{1}((0,2))} . 
\end{split}
\end{equation*}
This proves the theorem for ~$\Omega = (0,2)$. 

\subsection{Proof Theorem ~\ref{th: intermediate th} for general domain in dimension one}\label{3.2}

Without any loss of generality assume ~$\Omega = (0, 2D)$ for ~$D >0$. Scaling appropriately the results in last subsection it is easy to that for ~$R>D$, clearly
\begin{multline*}
    \int_{0}^{2D} \frac{|u(x)|}{\delta^{s}_{(0,2D)}(x)} \mathcal{L}_{1} \left( \frac{\delta_{(0,2D)}(x)}{R} \right) \cdots \mathcal{L}_{m-1} \left( \frac{\delta_{(0,2D)}(x)}{R} \right) \mathcal{L}^{2}_{m} \left( \frac{\delta_{(0,2D)}(x)}{R} \right)dx \\ \leq  C_{1, Poin}\left( 2^{3s+m+1} + 2^{s+1}\right)(1-s)[u]_{W^{s,1}((0,2D))}+  \frac{2^{m+2} 3^{s}}{D^s}||u||_{L^{1}((0,2D))}.
\end{multline*}
The proof finishes of the first part. Let ~$\alpha<\frac{1}{2}$ and summing from ~$m=2$ to ~$\infty$, we have
\begin{equation*}
   \begin{split} 
  \sum_{m=2}^{\infty} \alpha^{m}  \int_{0}^{2D}  \frac{|u(x)|}{\delta^{s}_{(0,2D)}(x)} &   \mathcal{L}_{1} \left( \frac{\delta_{(0,2D)}(x)}{R} \right) \cdots \mathcal{L}_{m-1} \left( \frac{\delta_{(0,2D)}(x)}{R} \right) \mathcal{L}^{2}_{m} \left( \frac{\delta_{(0,2D)}(x)}{R} \right)  dx \\  & \leq \left\{ 2^{3s+1}\frac{4\alpha^2}{1-2\alpha} +2^{s+1}\frac{\alpha^2}{1-\alpha}\right\} C_{1, Poin}(1-s)   [u]_{W^{s,1}((0,2D))} \\& \hspace{5mm} +\left( 2^{2} \times 3^s \frac{4\alpha^2}{1-2\alpha} \right) \frac{1}{D^{s}} ||u||_{L^{1}((0,2D))}  \\
  & = \mathcal{A}(s,\alpha)C_{1, Poin}(1-s)   [u]_{W^{s,1}((0,2D))} + \frac{\mathcal{B}(s,\alpha)}{D^{s}} ||u||_{L^{1}((0,2D))} . 
\end{split} 
\end{equation*}

\subsection{Proof of Theorem ~\ref{th: main result} for general domain in dimension one}\label{3.3}  From Theorem ~\ref{th: intermediate th} with ~$d=1$, ~$\Omega= (0,2D), ~ D >0$ and ~$R > D$, we have
\begin{multline*}
    \int_{0}^{2D} \frac{|u(x)|}{\delta^{s}_{(0,2D)}(x)} \mathcal{L}_{1} \left( \frac{\delta_{(0,2D)}(x)}{R} \right) \cdots \mathcal{L}_{m-1} \left( \frac{\delta_{(0,2D)}(x)}{R} \right) \mathcal{L}^{2}_{m} \left( \frac{\delta_{(0,2D)}(x)}{R} \right)dx \\ \leq  C_{1, Poin}\left( 2^{3s+m+1} + 2^{s+1}\right)(1-s)[u]_{W^{s,1}((0,2D))}+  \frac{2^{m+2} 3^{s}}{D^s}||u||_{L^{1}((0,2D))}.
\end{multline*}
Using Fatou's lemma, we have
\begin{equation*}
\begin{split}
    \int_{0}^{2D} & \frac{|u(x)|}{\delta_{(0,2D)}(x)}  \mathcal{L}_{1} \left( \frac{\delta_{(0,2D)}(x)}{R} \right) \cdots \mathcal{L}_{m-1} \left( \frac{\delta_{(0,2D)}(x)}{R} \right) \mathcal{L}^{2}_{m} \left( \frac{\delta_{(0,2D)}(x)}{R} \right)dx \\ & \leq \liminf_{s \to 1}  \int_{0}^{2D} \frac{|u(x)|}{\delta^{s}_{(0,2D)}(x)} \mathcal{L}_{1} \left( \frac{\delta_{(0,2D)}(x)}{R} \right) \cdots \mathcal{L}_{m-1} \left( \frac{\delta_{(0,2D)}(x)}{R} \right) \mathcal{L}^{2}_{m} \left( \frac{\delta_{(0,2D)}(x)}{R} \right) dx \\ & \leq \liminf_{s \to 1} \left\{ C_{1, Poin}\left( 2^{3s+m+1} + 2^{s+1}\right)(1-s)[u]_{W^{s,1}((0,2D))}+  \frac{2^{m+2} 3^{s}}{D^s}||u||_{L^{1}((0,2D))} \right\}.
\end{split}
\end{equation*}
From Lemma ~\ref{bv function} and Remark ~\ref{davilla remark}, we have
\begin{multline*}
     \int_{0}^{2D} \frac{|u(x)|}{\delta_{(0,2D)}(x)} \mathcal{L}_{1} \left( \frac{\delta_{(0,2D)}(x)}{R} \right) \cdots \mathcal{L}_{m-1} \left( \frac{\delta_{(0,2D)}(x)}{R} \right) \mathcal{L}^{2}_{m} \left( \frac{\delta_{(0,2D)}(x)}{R} \right)     dx \\ \leq C_{1, Poin} C_{BV,1} \left( 2^{m+4} + 2^{2} \right)[u]_{BV((0,2D))}+ \frac{3 \times 2^{m+2}}{D}  ||u||_{L^{1}((0,2D))}.
\end{multline*}
Therefore, from Lemma ~\ref{poincare in BV} and using ~$[u-(u)_{(0,2D)}]_{BV((0,2D))} = [u]_{BV((0,2D))}$, we have
\begin{multline*}
     \int_{0}^{2D} \frac{|u(x) - (u)_{(0,2D)}|}{\delta_{(0,2D)}(x)} \mathcal{L}_{1} \left( \frac{\delta_{(0,2D)}(x)}{R} \right) \cdots \mathcal{L}_{m-1} \left( \frac{\delta_{(0,2D)}(x)}{R} \right) \mathcal{L}^{2}_{m} \left( \frac{\delta_{(0,2D)}(x)}{R} \right) dx \\ \leq   C_{1, Poin} C_{BV,1} \left( 2^{m+4} + 2^{2} + 3 \times 2^{m+3}   \right)[u]_{BV((0,2D))}.
\end{multline*}
Let ~$\alpha<\frac{1}{2}$ and summing from ~$m=2$ to ~$\infty$, we have
\begin{multline*}
  \sum_{m=2}^{\infty} \alpha^{m} \int_{0}^{2D} \frac{|u(x) - (u)_{(0,2D)}|}{\delta_{(0,2D)}(x)}  \mathcal{L}_{1} \left( \frac{\delta_{(0,2D)}(x)}{R} \right) \cdots \mathcal{L}_{m-1} \left( \frac{\delta_{(0,2D)}(x)}{R} \right) \mathcal{L}^{2}_{m} \left( \frac{\delta_{(0,2D)}(x)}{R} \right) dx \\ \leq C_{1, Poin} C_{BV,1} \left\{ 2^{5} \times 5 \frac{\alpha^{2}}{1-2 \alpha} + 2^{2}  \frac{\alpha^{2}}{1- \alpha}   \right\}  [u]_{BV((0,2D))}.
\end{multline*}
This proves the Theorem ~\ref{th: main result} in dimension $1$.

\section{Proof of the main results in dimension ~\texorpdfstring{~$d \geq 2$}{higher dimension}}\label{proof of main result}

In this section, we prove Theorem ~\ref{th: main result} and Theorem ~\ref{th: intermediate th} in dimension ~$d \geq 2$. Initially, we establish Theorem ~\ref{th: intermediate th} for the flat boundary case  which is done in Lemma ~\ref{flat case 1}. Consequently, in subsection ~\ref{4.1} we employ patching techniques to prove Theorem ~\ref{th: intermediate th}. Finally, we establish our main result, Theorem ~\ref{th: main result} in subsection ~\ref{4.2} using Theorem ~\ref{th: intermediate th} and Lemma ~\ref{bv function}. 

\smallskip

Let ~$\Omega_{n} = (-n,n)^{d-1} \times (0,1)$, where ~$n \in \mathbb{N}$. For each ~$k \in \mathbb{Z}$ and ~$k \leq -1$, set
\begin{equation*}
    A_{k} := \{ (x',x_{d}) : \  x' \in (-n,n)^{d-1}, \ 3^{k} \leq x_{d} < 3^{k+1} \} .
\end{equation*}
Then, we have ~$ \Omega_{n}= \bigcup_{k = - \infty}^{-1} A_{k}$. Again, we  further divide each ~$A_{k}$ into disjoint cubes each of side length ~$2 \times 3^k$ (say ~$A^{i}_{k}$). Then
\begin{equation*}
    A_{k} = \bigcup_{i=1}^{ 3^{(-k)(d-1)} n^{d-1}} A^{i}_{k}  .
\end{equation*}
For simplicity, let ~$\sigma_{k} =  3^{(-k)(d-1)} n^{d-1}$. Then
\begin{equation*}
    A_{k} = \bigcup_{i = 1}^{\sigma_{k}} A^{i}_{k}  .
\end{equation*}

The next lemma proves Theorem ~\ref{th: intermediate th} when the domain is ~$\mathbb{R}^{d}_{+}$ and the test functions supported on ~$ \Omega_{n} = (-n,n)^{d-1} \times (0,1)$, where ~$n \in \mathbb{N}$.

\begin{lemma}\label{flat case 1}
Let $ \Omega_{n} = (-n,n)^{d-1} \times (0,1)$ for some ~$n \in \mathbb{N}, ~R >1$, $m \geq 2$ be positive integers and $\frac{1}{2} \leq s < 1$. Then there exists a constant $C= C(d)>0$ such that for all ~$ u \in W^{s,1}(\Omega_{n})$,
    \begin{multline}  
       \int_{\Omega_{n}} \frac{|u(x)|}{x^{s}_{d}} \mathcal{L}_{1} \left( \frac{x_{d}}{R} \right) \cdots \mathcal{L}_{m-1} \left( \frac{x_{d}}{R} \right) \mathcal{L}^{2}_{m} \left( \frac{x_{d}}{R} \right)  dx \leq \\  C 2^{m} \left\{ (1-s)[u]_{W^{s,1}(\Omega_{n})}  +  ||u||_{L^{1}(\Omega_{n})} \right\}.
    \end{multline}
\end{lemma}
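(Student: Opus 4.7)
The plan is to follow the one--dimensional blueprint of Lemma~\ref{flat case 1 d=1} essentially verbatim, replacing intervals by cubes and accounting for the $3^{d-1}$-to-$1$ child--parent relation dictated by the higher--dimensional geometry. Since $sp=s<1$, one has $W^{s,1}(\Omega_n)=W^{s,1}_0(\Omega_n)$, so by density it suffices to prove the inequality for $u\in C_c^\infty(\Omega_n)$; this makes $(u)_{A_k^i}$ vanish once $k$ is sufficiently negative and legitimises the telescoping.

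On each cube $A_k^i$ of side $\lambda=2\cdot 3^k$, Lemma~\ref{poincare} together with $x_d^{-s}\le 3^{-ks}$ on $A_k^i$ yields
\begin{equation*}
\int_{A_k^i}\frac{|u(x)|}{x_d^s}\,dx\le C_{d,Poin}\,2^{s}(1-s)[u]_{W^{s,1}(A_k^i)}+2^d\,3^{k(d-s)}|(u)_{A_k^i}|,
\end{equation*}
and Lemma~\ref{estimate Aki and Li} bounds the weight $\mathcal{L}_1(x_d/R)\cdots\mathcal{L}_m^2(x_d/R)$ by $\mathcal{Y}_1(k)\cdots\mathcal{Y}_m^2(k)\le 1$. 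Multiplying and summing over $i,k$, the disjointness of the $A_k^i$'s collapses the Gagliardo contribution to $C(d)\,2^s(1-s)[u]_{W^{s,1}(\Omega_n)}$, leaving only the $|(u)_{A_k^i}|$ terms to be controlled.

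For these, assign to each $A_k^i$ a \emph{parent} $A_{k+1}^{j(i)}\subset A_{k+1}$ whose horizontal projection contains that of $A_k^i$; exactly $3^{d-1}$ children share each parent, and $A_k^i\cup A_{k+1}^{j(i)}$ is contained in a cube $G_k^i$ of side $8\cdot 3^k$. Lemma~\ref{avg} applied to $E=A_k^i$, $F=A_{k+1}^{j(i)}$, $G=G_k^i$, after multiplying by $3^{k(d-s)}$, gives
\begin{equation*}
3^{k(d-s)}|(u)_{A_k^i}|\le 3^{k(d-s)}|(u)_{A_{k+1}^{j(i)}}|+C(d)(1-s)[u]_{W^{s,1}(G_k^i)},
\end{equation*}
and summing over $i$, using $3^{d-1}\le 3^{d-s}$, yields
\begin{equation*}
\sum_i 3^{k(d-s)}|(u)_{A_k^i}|\le \sum_j 3^{(k+1)(d-s)}|(u)_{A_{k+1}^j}|+C(d)(1-s)\sum_i[u]_{W^{s,1}(G_k^i)}.
\end{equation*}
Weighting this by $\mathcal{Y}_m(k)\le 1$ on the error, telescoping over $k\in\{\ell,\ldots,-2\}$ exactly as in Lemma~\ref{flat case 1 d=1}, applying Lemma~\ref{large n ineq} to the difference $\mathcal{Y}_m(k)-\mathcal{Y}_m(k-1)\ge 2^{-(m+1)}\mathcal{Y}_1(k)\cdots\mathcal{Y}_m^2(k)$, and letting $\ell\to-\infty$ (compact support kills the lower boundary), I will obtain
\begin{equation*}
\sum_{k\le-1}\sum_i 3^{k(d-s)}\mathcal{Y}_1(k)\cdots\mathcal{Y}_m^2(k)|(u)_{A_k^i}|\le 2^{m+1}\Bigl[(3/2)^d\|u\|_{L^1(\Omega_n)}+C(d)(1-s)\sum_{k,i}[u]_{W^{s,1}(G_k^i)}\Bigr],
\end{equation*}
the $L^1$ term arising from the telescope endpoint $\sum_i|(u)_{A_{-1}^i}|\le (3/2)^d\|u\|_{L^1(\Omega_n)}$.

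The remaining technical point, and the main place where higher dimensions differ from the interval case, is the bounded--overlap estimate $\sum_{k,i}[u]_{W^{s,1}(G_k^i)}\le C(d)[u]_{W^{s,1}(\Omega_n)}$. Each $G_k^i$ lies in $A_k\cup A_{k+1}$, and at any fixed level $k$ the cubes $\{G_k^i\}_i$ cover a given point at most $O_d(1)$ times (parent cubes are $2\cdot 3^{k+1}$--spaced while $G_k^i$ has side $8\cdot 3^k$), so the estimate follows by an appendix--type argument akin to \eqref{se1}. A minor subtlety is that cubes $A_k^i$ abutting $\partial((-n,n)^{d-1})$ may lack a strict parent; one instead selects a ``pseudo--parent'' at level $k+1$ adjacent to $A_k^i$, which only alters the geometric factors by an $O_d(1)$ amount. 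Combining the two pieces yields the inequality with constant $C(d)\cdot 2^m$, as required.
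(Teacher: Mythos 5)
Your proposal is correct and follows essentially the same route as the paper's proof: the same layer-and-cube decomposition into the $A_k^i$ of side $2\cdot 3^k$, the fractional Poincar\'e inequality (Lemma~\ref{poincare}) on each cube, the parent--child comparison of averages via Lemma~\ref{avg} on a cube of side $8\cdot 3^k$ with the factor $3^{d-1}\le 3^{d-s}$, the telescoping combined with Lemma~\ref{large n ineq}, and the endpoint bound $\sum_i |(u)_{A_{-1}^i}|\le (3/2)^d \|u\|_{L^1(\Omega_n)}$. The only cosmetic differences are that the paper realizes your bounded-overlap claim through the explicit even/odd splittings \eqref{se1}--\eqref{se2} (with one cube $G^j_{k+1}$ per parent rather than per child), and your worry about boundary cubes lacking a parent is vacuous, since $n\in\mathbb{N}$ and $2\cdot 3^{k+1}=3\cdot(2\cdot 3^{k})$ make the horizontal grids at consecutive levels nest exactly, so no pseudo-parents are needed.
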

\begin{proof}
Since, $W^{s,1}(\Omega_{n}) = W^{s,1}_{0}(\Omega_{n})$ (see, ~\cite[Theorem 6.78]{leonibook}). Therefore, it is sufficient to establish the following lemma for any $u \in C^{1}_{c}(\Omega_{n})$. Let $u \in C^{1}_{c}(\Omega_{n})$ and fix any  ~$A^{i}_{k}$. Then ~$A^{i}_{k}$ is a translation of ~$(3^{k}, 3^{k+1})^{d}$. Applying Lemma ~\ref{poincare} with  ~$ \frac{1}{2} \leq s<1, ~ \Omega = \left( \frac{1}{2} , \frac{3}{2} \right)^d$, and ~$ \lambda = 2 \times 3^k$ and using translation invariance, we have
\begin{equation*}
    \fint_{A^{i}_{k}} |u(x)-(u)_{A^{i}_{k}}|  dx \leq C_{d, Poin} 2^{s-d} 3^{k(s-d)} (1-s) [u]_{W^{s,1}(A^{i}_{k})},
\end{equation*}
where $C_{d, Poin}>0$ is a constant as in Lemma ~\ref{poincare}. Let ~$x = (x',x_d) \in A^{i}_{k}$.  Then ~$x_d \geq 3^k$ which implies ~$ \frac{1}{x_{d}} \leq \frac{1}{3^{k}}$. Therefore, we have
\begin{multline*}
    \int_{A^{i}_{k}} \frac{|u(x)|}{x^{s}_{d}}  dx \leq\frac{1}{3^{k s}} \int_{A^{i}_{k}} |u(x)-(u)_{A^{i}_{k}} + (u)_{A^{i}_{k}}| dx \\
    \leq \frac{1}{3^{k s}} \int_{A^{i}_{k}} |u(x)-(u)_{A^{i}_{k}}|  dx + \frac{1}{3^{k s}} \int_{A^{i}_{k}} |(u)_{A^{i}_{k}}|dx.
    \end{multline*}
  Now, using the previous inequality, we obtain
    \begin{equation*}\label{eqn Aki}
    \begin{split}
    \int_{A^{i}_{k}} \frac{|u(x)|}{x^{s}_{d}}  dx  \leq  \frac{|A^{i}_{k}|}{3^{k s}}  
& \fint_{A^{i}_{k}} |u(x)-  (u)_{A^{i}_{k}}|   dx  +  \frac{|A^{i}_{k}|}{3^{k s}} |(u)_{A^{i}_{k}}| \\
    &\leq C_{d, Poin}2^{s-d} \frac{2^{d}3^{kd}}{3^{k s}} 3^{k(s-d)} (1-s) [u]_{W^{s,1}(A^{i}_{k})} +  2^{d} 3^{k(d-s)} |(u)_{A^{i}_{k}}|  
   \\ &= C_{d, Poin} 2^{s} (1-s) [u]_{W^{s,1}(A^{i}_{k})} + 2^{d} 3^{k(d-s)} |(u)_{A^{i}_{k}}|  .
   \end{split}
\end{equation*}   
From Lemma ~\ref{estimate Aki and Li} and using $\mathcal{Y}_{1}(k) \cdots \mathcal{Y}_{m-1}(k) \mathcal{Y}^{2}_{m}(k) \leq 1$, we have
\begin{multline*}
    \int_{A^{i}_{k}} \frac{|u(x)|}{x^{s}_{d}} \mathcal{L}_{1} \left( \frac{x_{d}}{R} \right) \cdots \mathcal{L}_{m-1} \left( \frac{x_{d}}{R} \right) \mathcal{L}^{2}_{m} \left( \frac{x_{d}}{R} \right)   dx \leq C_{d, Poin} 2^{s} (1-s) [u]_{W^{s,1}(A^{i}_{k})} \\ + 2^{d} 3^{k(d-s)} \mathcal{Y}_{1}(k) \cdots \mathcal{Y}_{m-1}(k) \mathcal{Y}^{2}_{m}(k)  |(u)_{A^{i}_{k}}| .
\end{multline*}
Summing the above inequality from ~$i=1$ to ~$\sigma_{k}$, we obtain
\begin{multline*}
    \int_{A_{k}} \frac{|u(x)|}{x^{s}_{d}} \mathcal{L}_{1} \left( \frac{x_{d}}{R} \right) \cdots \mathcal{L}_{m-1} \left( \frac{x_{d}}{R} \right) \mathcal{L}^{2}_{m} \left( \frac{x_{d}}{R} \right)   dx     \leq  C_{d, Poin} 2^{s} (1-s)  [u]_{W^{s,1}(A_{k})} \\ + 2^{d} 3^{k(d-s)} \mathcal{Y}_{1}(k) \cdots \mathcal{Y}_{m-1}(k) \mathcal{Y}^{2}_{m}(k)   \sum_{i=1}^{\sigma_{k}} |(u)_{A^{i}_{k}}|.
\end{multline*}
Summing the above inequality from ~$k= \ell \in \mathbb{Z}^{-}$ to ~$-1$, we get
\begin{multline}\label{eqnn1}
\sum_{k= \ell}^{-1} \int_{A_{k}} \frac{|u(x)|}{x^{s}_{d}} \mathcal{L}_{1} \left( \frac{x_{d}}{R} \right) \cdots \mathcal{L}_{m-1} \left( \frac{x_{d}}{R} \right) \mathcal{L}^{2}_{m} \left( \frac{x_{d}}{R} \right)  dx    \leq   C_{d, Poin} 2^{s} (1-s)  \sum_{k= \ell}^{-1} [u]_{W^{s,1}(A_{k})} \\ + 2^{d} \sum_{k= \ell}^{-1}  3^{k(d-s)} \mathcal{Y}_{1}(k) \cdots \mathcal{Y}_{m-1}(k) \mathcal{Y}^{2}_{m}(k) \sum_{i=1}^{\sigma_{k}} |(u)_{A^{i}_{k}}|  .
\end{multline} 
Let ~$A^{j}_{k+1}$ be a cube such that ~$A^{i}_{k}$ lies below the cube $A^{j}_{k+1}$. Independently, using triangle inequality, we have 
\begin{equation*}
    |(u)_{A^{i}_{k}}| \leq  |(u)_{A^{j}_{k+1}}| + |(u)_{A^{i}_{k}} - (u)_{A^{j}_{k+1}}|.
\end{equation*}
Choose a cube $G^{j}_{k+1}$ of side length $2 \times 3^{k+1}+ 2 \times 3^{k}$ such that $A^{i}_{k} \cup A^{j}_{k+1} \subset G^{j}_{k+1} $ and $G^{j}_{k+1} \subset A_{k} \cup A_{k+1}$. Therefore, using Lemma ~\ref{avg} with $E= A^{i}_{k}, ~ F = A^{j}_{k+1}$ and $G= G^{j}_{k+1}$ with $\lambda= 8 \times 3^{k} $, we have
\begin{equation*}
    |(u)_{A^{i}_{k}}| \leq  |(u)_{A^{j}_{k+1}}| + C_{d, Poin} 8^{s-d} 3^{k(s-d)} (1-s) [u]_{W^{s,1}(G^{j}_{k+1})}  .
\end{equation*}
Multiplying the above inequality by ~$3^{k(d-s)}$, we get
\begin{equation*}
   3^{k(d-s)} |(u)_{A^{i}_{k}}|  \leq 3^{k(d-s)}|(u)_{A^{j}_{k+1}}| + C_{d, Poin} 2^{3s-3d} (1-s)  [u]_{W^{s,1}(G^{j}_{k+1})}   .
\end{equation*}
Since, there are ~$3^{d-1}$ such ~$A^{i}_{k}$'s cubes lies below the cube ~$A^{j}_{k+1}$. Therefore, summing the above inequality from ~$i=3^{d-1}(j-1)+1$ to $3^{d-1}j$ and using $3^{d-1} \leq 3^{d-s}$, we obtain
\begin{multline*}
     3^{k(d-s)}  \sum_{i=3^{d-1}(j-1)+1}^{3^{d-1}j} |(u)_{A^{i}_{k}}| \leq 3^{d-1}3^{k(d-s)} |(u)_{A^{j}_{k+1}}| 
       + 3^{d-1} C_{d, Poin}  2^{3s-3d} (1-s)  [u]_{W^{s,1}(G^{j}_{k+1})}  \\ \leq 3^{(k+1)(d-s)}  |(u)_{A^{j}_{k+1}}| + C_{d, Poin} 2^{3s-3d} 3^{ d-1} (1-s) [u]_{W^{s,1}(G^{j}_{k+1})}   .
\end{multline*}
Again, summing the above inequality from ~$j=1$ to ~$\sigma_{k+1}$, using the fact that
\begin{equation*}
  \sum_{j=1}^{\sigma_{k+1}} \Bigg(  \sum_{i=3^{d-1}(j-1)+1}^{3^{d-1}j}  |(u)_{A^{i}_{k}}| \Bigg)  = \sum_{i=1}^{\sigma_{k}}  |(u)_{A^{i}_{k}}|,
\end{equation*}
and ~\eqref{se2} (See, Appendix ~\ref{appen}), we obtain
\begin{multline*}
3^{k(d-s)}  \sum_{i=1}^{\sigma_{k}} |(u)_{A^{i}_{k}}| \leq  3^{(k+1)(d-s)} \sum_{j=1}^{\sigma_{k+1}} |(u)_{A^{j}_{k+1}}| 
  + C_{d, Poin} 2^{3s-3d} 3^{ d-1} (1-s) \sum_{j=1}^{\sigma_{k+1}}  [u]_{W^{s,1}(G^{j}_{k+1})}    \\ \leq 3^{(k+1)(d-s)} \sum_{j=1}^{\sigma_{k+1}} |(u)_{A^{j}_{k+1}}| + C_{d, Poin} 2^{3s-3d+1} 3^{ d-1} (1-s)[u]_{W^{s,1}(A_{k} \cup A_{k+1})} .
\end{multline*}
Multiplying the above inequality with $\mathcal{Y}_{m}(k)$ and using $\mathcal{Y}_{m}(k) \leq 1$ for all $k \leq -1$, we obtain
\begin{multline*}
    3^{k(d-s)} \mathcal{Y}_{m}(k)  \sum_{i=1}^{\sigma_{k}} |(u)_{A^{i}_{k}}| \leq 3^{(k+1)(d-s)} \mathcal{Y}_{m}(k) \sum_{j=1}^{\sigma_{k+1}} |(u)_{A^{j}_{k+1}}| \\ +  C_{d, Poin} 2^{3s-3d+1} 3^{ d-1} (1-s)[u]_{W^{s,1}(A_{k} \cup A_{k+1})} .
\end{multline*}
For simplicity let ~$a_{k} = \sum_{i=1}^{\sigma_{k}} |(u)_{A^{i}_{k}}|$. Then, the above inequality will become
\begin{equation*}
  3^{k(d-s)} \mathcal{Y}_{m}(k)   a_{k} \leq 3^{(k+1)(d-s)} \mathcal{Y}_{m}(k)  a_{k+1} + C_{d, Poin} 2^{3s-3d+1} 3^{ d-1} (1-s) [u]_{W^{s,1}(A_{k} \cup A_{k+1})} .
\end{equation*}
Summing the above inequality from ~$k= \ell \in \mathbb{Z}^{-}$ to ~$-2$, we get
\begin{multline*}
 \sum_{k= \ell}^{-2} 3^{k(d-s)} \mathcal{Y}_{m}(k)   a_{k} \leq \sum_{k= \ell}^{-2} 3^{(k+1)(d-s)} \mathcal{Y}_{m}(k)  a_{k+1} \\ + C_{d, Poin} 2^{3s-3d+1} 3^{ d-1} (1-s) \sum_{k= \ell}^{-2} [u]_{W^{s,1}(A_{k} \cup A_{k+1})} .
\end{multline*}
By changing sides, rearranging, and re-indexing, we get
\begin{multline*}
  3^{l(d-s)} \mathcal{Y}_{m}(\ell) a_{\ell}  +  \sum_{k= \ell+1}^{-2} 3^{k(d-s)} \left\{ \mathcal{Y}_{m}(k) - \mathcal{Y}_{m}(k-1) \right\} a_{k} \\ \leq 3^{(-1)(d-s)} \mathcal{Y}_{m}(-2) a_{-1} 
    + C_{d, Poin} 2^{3s-3d+1} 3^{ d-1} (1-s) \sum_{k= \ell}^{-2} [u]_{W^{s,1}(A_{k} \cup A_{k+1})}.
\end{multline*}
Using the asymptotics (see, Lemma ~\ref{large n ineq})
\begin{equation*}
    \mathcal{Y}_{m}(k) - \mathcal{Y}_{m}(k-1) \geq \frac{\mathcal{Y}_{1}(k) \cdots \mathcal{Y}_{m-1}(k) \mathcal{Y}^{2}_{m}(k)}{2^{m+1}} ,
\end{equation*}
choose ~$-\ell$ large enough such that ~$|(u)_{A^{j}_{\ell}}|=0$ for all ~$j \in \{ 1, \cdots, \sigma_{\ell} \}$, we obtain
\begin{multline*}
    \sum_{k= \ell}^{-2} \frac{3^{k(d-s)}}{2^{m+1}}  \mathcal{Y}_{1}(k) \cdots  \mathcal{Y}_{m-1}(k)  
 \mathcal{Y}^{2}_{m}(k) a_{k} \\  \leq 3^{s-d} \mathcal{Y}_{m}(-2) a_{-1}    +  C_{d, Poin} 2^{3s-3d+1} 3^{ d-1} (1-s) \sum_{k= \ell}^{-2} [u]_{W^{s,1}(A_{k} \cup A_{k+1})}.
\end{multline*}
Adding ~$\frac{3^{(-1)(d-s)}}{2^{m+1}}  \mathcal{Y}_{1}(-1) \cdots \mathcal{Y}_{m-1}(-1) \mathcal{Y}^{2}_{m}(-1) a_{-1}$ on both sides of the above inequality, we obtain
\begin{equation*}
\begin{split}
   \sum_{k= \ell}^{-1} \frac{3^{k(d-s)}}{2^{m+1}}  \mathcal{Y}_{1}(k) \cdots \mathcal{Y}_{m-1}(k) & \mathcal{Y}^{2}_{m}(k) a_{k} \\ & \leq 3^{s-d} \left\{ \mathcal{Y}_{m}(-2) +  \frac{1}{2^{m+1}}  \mathcal{Y}_{1}(-1) \cdots \mathcal{Y}_{m-1}(-1) \mathcal{Y}^{2}_{m}(-1)  \right\}  a_{-1} \\ & \hspace{5mm}  +  C_{d, Poin} 2^{3s-3d+1} 3^{ d-1} (1-s) \sum_{k= \ell}^{-2} [u]_{W^{s,1}(A_{k} \cup A_{k+1})} \\ & \leq 3^{s-d} a_{-1} +  C_{d, Poin} 2^{3s-3d+1} 3^{ d-1} (1-s) \sum_{k= \ell}^{-2} [u]_{W^{s,1}(A_{k} \cup A_{k+1})} .
\end{split}
\end{equation*}
In the last inequality, we have used Lemma ~\ref{large n ineq} with $k=-1$. Therefore, we have
\begin{multline*}
    \sum_{k= \ell}^{-1} 3^{k(d-s)}  \mathcal{Y}_{1}(k) \cdots \mathcal{Y}_{m-1}(k) \mathcal{Y}^{2}_{m}(k) a_{k} \leq 2^{m+1} 3^{s-d} a_{-1} \\ + C_{d, Poin}  2^{3s-3d+m+2}  3^{ d-1} (1-s) \sum_{k= \ell}^{-2} [u]_{W^{s,1}(A_{k} \cup A_{k+1})} .
\end{multline*}
Putting the value of ~$a_{k}$ in the above inequality, we obtain
\begin{multline}\label{eqnn99}
    \sum_{k= \ell}^{-1} 3^{k(d-s)}  \mathcal{Y}_{1}(k) \cdots \mathcal{Y}_{m-1}(k) \mathcal{Y}^{2}_{m}(k) \sum_{i=1}^{\sigma_{k}} |(u)_{A^{i}_{k}}| \leq 2^{m+1} 3^{s-d}\sum_{j=1}^{\sigma_{-1}} |(u)_{A^{j}_{-1}}| \\ + C_{d, Poin}  2^{3s-3d+m+2}  3^{ d-1} (1-s) \sum_{k= \ell}^{-2} [u]_{W^{s,1}(A_{k} \cup A_{k+1})} .
\end{multline}
Combining ~\eqref{eqnn1} and ~\eqref{eqnn99} together ~\eqref{se1} (see,  Appendix ~\ref{appen}), yields
\begin{equation*}
\begin{split}
  \sum_{k= \ell}^{-1} & \int_{A_{k}}  \frac{|u(x)| }{x^{s}_{d}}  \mathcal{L}_{1} \left( \frac{x_{d}}{R} \right)  \cdots \mathcal{L}_{m-1} \left( \frac{x_{d}}{R} \right) \mathcal{L}^{2}_{m}  \left( \frac{x_{d}}{R} \right)   dx  \leq C_{d, Poin} 2^{s} (1-s)   [u]_{W^{s,1}(\Omega_{n})}  \\ & \hspace{3mm} +  2^{d} \left\{  2^{m+1} 3^{s-d} \sum_{j=1}^{\sigma_{-1}} |(u)_{A^{j}_{-1}}| + C_{d, Poin}  2^{3s-3d+m+2}  3^{ d-1} (1-s) \sum_{k= \ell}^{-2} [u]_{W^{s,1}(A_{k} \cup A_{k+1})} \right\} \\ & \leq C_{d, Poin} \left( 2^{s}+ 2^{3s-2d+m+3} 3^{d-1} \right)(1-s)  [u]_{W^{s,1}(\Omega_{n})} + 2^{m+d+1} 3^{s-d}\sum_{j=1}^{\sigma_{-1}} |(u)_{A^{j}_{-1}}|.
\end{split}
\end{equation*}
Also
\begin{equation*}
    \sum_{j=1}^{\sigma_{-1}} |(u)_{A^{j}_{-1}}| \leq \left( \frac{3}{2} \right)^{d} ||u||_{L^{1}(\Omega_{n})}.
\end{equation*}
Therefore, we have
\begin{multline*}
     \int_{\Omega_{n}} \frac{|u(x)|}{x^{s}_{d}} \mathcal{L}_{1} \left( \frac{x_{d}}{R} \right) \cdots \mathcal{L}_{m-1} \left( \frac{x_{d}}{R} \right) \mathcal{L}^{2}_{m} \left( \frac{x_{d}}{R} \right)  dx \\ \leq   C_{d, Poin} \left( 2^{s}+ 2^{3s-2d+m+3} 3^{d-1} \right)(1-s)[u]_{W^{s,1}(\Omega_{n})}  +  2^{m+1} 3^{s} ||u||_{L^{1}(\Omega_{n})}  .
\end{multline*}
This proves the lemma.
\end{proof}

\subsection{Proof of Theorem ~\ref{th: intermediate th}}\label{4.1}
Let ~$\Omega$ be a bounded Lipschitz domain. Consider the definition of bounded Lipschitz domain defined in Section ~\ref{preliminaries}. For simplicity, let ~$T_{x}$ be the identity map. Then 
\begin{equation*}
  \Omega \cap B_{r'_{x}}(x) = \{ \xi = (\xi',\xi_{d})  : \xi_{d} > \phi_{x}(\xi') \} \cap B_{r'_{x}}(x)  
\end{equation*}
and ~$\partial \Omega \subset \cup_{x \in \partial \Omega} B_{r'_{x}}(x)$. Choose ~$0<r_{x}<1$ such that ~$r_{x} \leq r'_{x}$ and for all ~$y \in \Omega \cap B_{r_{x}}(x)$, there exists ~$z \in \partial \Omega \cap B_{r_{x}}(x)$ satisfying ~$\delta_{\Omega}(y)= |y-z|$. Then ~$\partial \Omega \subset \cup_{x \in \partial \Omega} B_{r_{x}}(x)$. Since ~$\partial \Omega$ is compact, there exists ~$x_{1}, \cdots, x_{n} \in \partial \Omega$ such that
\begin{equation*}
    \partial \Omega  \subset \bigcup_{i=1}^{n} B_{r_{i}}(x_{i}) ,
\end{equation*}
where ~$r_{x_{i}}= r_{i}$.  

\smallskip

Let ~$u \in W^{s,1}(\Omega)$. Let ~$\Omega \subset \cup_{i=0}^{n} \Omega_{i}$ where ~$\overline{\Omega}_{0} \subset \Omega$ and ~$\Omega_{i} = B_{r_{i}}(x_{i})$ for all ~$ 1 \leq i \leq n $. Let ~$ \{ \eta_{i} \}_{i=0}^{n}$ be the associated partition of unity. Then
\begin{equation*}
    u = \sum_{i=0}^{n} u_{i} \hspace{.3cm} \text{where} \  u_{i} = \eta_{i} u.
\end{equation*}
From Lemma ~\ref{xi estimate}, we have
\begin{equation*}
    || u_{i} ||_{W^{s,1}(\Omega \cap \Omega_{i} )} \leq C ||u||_{W^{s,1}(\Omega \cap \Omega_{i})}, \hspace{3mm} \forall \ 0 \leq i \leq 1.
\end{equation*}
Therefore, it is sufficient to prove Theorem ~\ref{th: intermediate th} for all $u_{i}, ~ 0 \leq i \leq n$. Since, ~$supp(u_{0}) \subset \Omega_{0}$ and for all ~$x \in \Omega_{0}$, 
 \begin{equation*}
     C_{1, Poin} \leq \delta_{\Omega}(x) \leq C_{2} \hspace{.3cm} \text{for some} \  C_{1, Poin}, C_{2} >0 .
 \end{equation*}
 Therefore, using $\mathcal{L}_{m} \left( \frac{\delta_{\Omega}(x)}{R} \right) \leq 1$ for all $m \geq 1$, we have
 \begin{equation*}
      \int_{\Omega_{0}} \frac{|u(x)|}{\delta^{s}_{\Omega}(x)} \mathcal{L}_{1} \left( \frac{\delta_{\Omega}(x)}{R} \right) \cdots  \mathcal{L}_{m-1} \left( \frac{\delta_{\Omega}(x)}{R} \right) \mathcal{L}^{2}_{m} \left( \frac{\delta_{\Omega}(x)}{R} \right)  dx \leq C \int_{\Omega_{0}} |u(x)| dx .
 \end{equation*}
 For ~$1 \leq i \leq n, ~ supp(u_{i}) \subset \Omega \cap \Omega_{i} $. Consider the transformation ~$F: \mathbb{R}^d \to \mathbb{R}^d$ such that ~$F(x',x_{d}) = (x',x_{d} - \phi_{x_{i}}(x'))$ and ~$G = F^{-1}$  (see subsection ~\ref{lipschitzdomain}, Appendix ~\ref{appen}), then
  \begin{equation*}
      \delta_{\Omega}(x) \sim \xi_{d} \hspace{.3cm} \text{for all} \  x \in \Omega \cap \Omega_{i},
 \end{equation*}
where ~$F(x) = (\xi_{1}, \cdots, \xi_{d})$. Therefore, from Lemma ~\ref{flat case 1}, we have
\begin{equation*}
\begin{split}
     \int_{\Omega \cap \Omega_{i}} \frac{|u(x)|}{\delta^{s}_{\Omega}(x)} \mathcal{L}_{1} \left( \frac{\delta_{\Omega}(x)}{R} \right) & \cdots \mathcal{L}_{m-1} \left( \frac{\delta_{\Omega}(x)}{R} \right)  \mathcal{L}^{2}_{m} \left( \frac{\delta_{\Omega}(x)}{R} \right)   dx \\ &  \sim \bigintsss_{F(\Omega \cap \Omega_{i})} \frac{|u_{i} \circ G(\xi)|}{\xi^{s}_{d}} \mathcal{L}_{1} \left( \frac{\xi_{d}}{R} \right) \cdots \mathcal{L}_{m-1} \left( \frac{\xi_{d}}{R} \right) \mathcal{L}^{2}_{m} \left( \frac{\xi_{d}}{R} \right)  d \xi \\ & \leq C 2^{m}(1-s)  [u_{i} \circ G]_{W^{s,1}(F(\Omega \cap \Omega_{i}))}   +  C2^{m} ||u_{i} \circ G||_{L^{1}(F(\Omega \cap \Omega_{i}))}   \\&  = C2^{m}(1-s)   [u_{i}]_{W^{s,1}(\Omega \cap \Omega_{i})}   + C2^{m}  ||u_{i}||_{L^{1}(\Omega \cap \Omega_{i})}   .
\end{split}
\end{equation*}
Hence, combining all the above cases, we obtain the following inequality:
\begin{multline*}
       \int_{\Omega} \frac{|u(x)|}{\delta^{s}_{\Omega}(x)} \mathcal{L}_{1} \left( \frac{\delta_{\Omega}(x)}{R} \right) \cdots \mathcal{L}_{m-1} \left( \frac{\delta_{\Omega}(x)}{R} \right)  \mathcal{L}^{2}_{m} \left( \frac{\delta_{\Omega}(x)}{R} \right)  dx \\ \leq C 2^{m}(1-s)  [u]_{W^{s,1}(\Omega)} +  C2^{m}||u||_{L^{1}(\Omega)} .
\end{multline*}
Let $\alpha<\frac{1}{2}$ and summing from $m=2$ to $\infty$, we have
\begin{multline*}
  \sum_{m=2}^{\infty} \alpha^{m}  \int_{\Omega} \frac{|u(x)|}{\delta^{s}_{\Omega}(x)}  \mathcal{L}_{1} \left( \frac{\delta_{\Omega}(x)}{R} \right) \cdots \mathcal{L}_{m-1} \left( \frac{\delta_{\Omega}(x)}{R} \right)  \mathcal{L}^{2}_{m} \left( \frac{\delta_{\Omega}(x)}{R} \right)  dx \\ \leq C \left( \frac{4 \alpha^{2}}{1-2 \alpha} \right) (1-s)  [u]_{W^{s,1}(\Omega)} +  C||u||_{L^{1}(\Omega)} \sum_{m=2}^{\infty} ( 2 \alpha )^{m} \\ \leq C \left( \frac{4 \alpha^{2}}{1-2 \alpha} \right)(1-s)  [u]_{W^{s,1}(\Omega)}  +  C||u||_{L^{1}(\Omega)}   .
\end{multline*}
This finishes the proof of Theorem \ref{th: intermediate th}.

\subsection{Proof of Theorem ~\ref{th: main result}}\label{4.2} From Lemma ~\ref{density}, it is sufficient to prove Theorem ~\ref{th: main result} for $W^{s,1}(\Omega) \cap BV(\Omega)$. Let $u \in W^{s,1}(\Omega) \cap BV(\Omega)$.  From Theorem \ref{th: intermediate th}, we have
\begin{multline*}
       \int_{\Omega} \frac{|u(x)|}{\delta^{s}_{\Omega}(x)} \mathcal{L}_{1} \left( \frac{\delta_{\Omega}(x)}{R} \right) \cdots \mathcal{L}_{m-1} \left( \frac{\delta_{\Omega}(x)}{R} \right)  \mathcal{L}^{2}_{m} \left( \frac{\delta_{\Omega}(x)}{R} \right)  dx \\ \leq C 2^{m}(1-s)  [u]_{W^{s,1}(\Omega)} +  C2^{m}||u||_{L^{1}(\Omega)} .
\end{multline*}
Using Fatou's lemma, we have
\begin{multline*}
    \int_{\Omega} \frac{|u(x)|}{\delta_{\Omega}(x)} \mathcal{L}_{1} \left( \frac{\delta_{\Omega}(x)}{R} \right) \cdots \mathcal{L}_{m-1} \left( \frac{\delta_{\Omega}(x)}{R} \right)  \mathcal{L}^{2}_{m} \left( \frac{\delta_{\Omega}(x)}{R} \right)  dx \\ \leq \liminf_{s \to 1} \int_{\Omega} \frac{|u(x)|}{\delta^{s}_{\Omega}(x)} \mathcal{L}_{1} \left( \frac{\delta_{\Omega}(x)}{R} \right) \cdots  \mathcal{L}_{m-1} \left( \frac{\delta_{\Omega}(x)}{R} \right) \mathcal{L}^{2}_{m} \left( \frac{\delta_{\Omega}(x)}{R} \right)  dx \\ \leq C2^{m} \liminf_{s \to 1}(1-s)[u]_{W^{s,1}(\Omega)} +  C2^{m}||u||_{L^{1}(\Omega)}.
\end{multline*}
From Lemma ~\ref{bv function}, we have
\begin{equation*}
     \int_{\Omega} \frac{|u(x)|}{\delta_{\Omega}(x)} \mathcal{L}_{1} \left( \frac{\delta_{\Omega}(x)}{R} \right) \cdots \mathcal{L}_{m-1} \left( \frac{\delta_{\Omega}(x)}{R} \right)  \mathcal{L}^{2}_{m} \left( \frac{\delta_{\Omega}(x)}{R} \right)  dx \leq C 2^{m} \left( [u]_{BV(\Omega)} + ||u||_{L^{1}(\Omega)} \right).
\end{equation*}
Therefore, from Lemma ~\ref{poincare in BV} and using $[u-(u)_{\Omega}]_{BV(\Omega)} = [u]_{BV(\Omega)}$, we have
\begin{equation*}
     \int_{\Omega} \frac{|u(x)-(u)_{\Omega}|}{\delta_{\Omega}(x)} \mathcal{L}_{1} \left( \frac{\delta_{\Omega}(x)}{R} \right) \cdots \mathcal{L}_{m-1} \left( \frac{\delta_{\Omega}(x)}{R} \right)  \mathcal{L}^{2}_{m} \left( \frac{\delta_{\Omega}(x)}{R} \right)  dx \leq C 2^{m} [u]_{BV(\Omega)}  .
\end{equation*}
Let $\alpha<\frac{1}{2}$ and summing from $m=2$ to $\infty$, we have
\begin{multline*}
  \sum_{m=2}^{\infty} \alpha^{m} \int_{\Omega} \frac{|u(x)-(u)_{\Omega}|}{\delta_{\Omega}(x)}  \mathcal{L}_{1} \left( \frac{\delta_{\Omega}(x)}{R} \right) \cdots \mathcal{L}_{m-1} \left( \frac{\delta_{\Omega}(x)}{R} \right)  \mathcal{L}^{2}_{m} \left( \frac{\delta_{\Omega}(x)}{R} \right)  dx \\ \leq C \left( \frac{4 \alpha^{2}}{1-2 \alpha} \right)  [u]_{BV(\Omega)}. 
\end{multline*}
This proves our main result, Theorem ~\ref{th: main result}.

\smallskip

\section{Failure for \texorpdfstring{$\alpha  \geq 1$}{alpha geq 1} in Theorem ~\ref{th: main result} and Theorem ~\ref{th: intermediate th}} \label{failure}

In this section we prove the failure of Theorem ~\ref{th: main result} and ~\ref{th: intermediate th} for $\alpha \geq 1$. First we establish the failure in Theorem ~\ref{th: main result} and then we establish the failure in Theorem ~\ref{th: intermediate th} using Theorem ~\ref{th: main result}. To prove the failure of our main results when $\alpha \geq 1$, it is sufficient to establish for the domain $\Omega = (-2n,2n)^{d-1} \times (0,2)$, where $n \in \mathbb{N}$ and a function supported on $\Omega_{n} = (-n,n)^{d-1} \times (0,1)$. Let $u' \in C^{\infty}_{c}((-n,n)^{d-1}) $ and $u_{d}(x) = 1$ for all $x \in (0,2)$. For any $x = (x', x_{d}) \in \Omega$, define 
\begin{equation}\label{contradiction function}
    u(x) = u'(x') u_{d}(x_{d})= u'(x').
\end{equation}
Then $u \in BV(\Omega) \cap W^{s,1}(\Omega)$ and for any $x \in \Omega_{n}$, we have $\delta_{\Omega}(x) = x_{d}$. From ~\eqref{derivative of L fn}, we obtain
\begin{multline}
     \int_{\Omega_{n}} \frac{|u(x)|}{x_{d}} \mathcal{L}_{1} \left( \frac{x_{d}}{R} \right) \cdots \mathcal{L}_{m-1} \left( \frac{x_{d}}{R} \right) \mathcal{L}^{2}_{m} \left( \frac{x_{d}}{R} \right)  dx = \frac{1}{R} \int_{\Omega_{n}} |u(x)| \frac{d}{dx_{d}} \mathcal{L}_{m} \left( \frac{x_{d}}{R} \right) dx' dx_{d} \\ = \frac{1}{R} \int_{(-n,n)^{d-1} } |u'(x')| dx' \int_{0}^{1} \frac{d}{dx_{d}} \mathcal{L}_{m}(x_{d})  dx_{d} \\ =  \frac{1}{R} \left( \mathcal{L}_{m} \left( \frac{1}{R} \right) - \mathcal{L}_{m}(0) \right) \int_{(-n,n)^{d-1} } |u'(x')| dx' .
\end{multline}
From the definition of $\mathcal{L}_{m}$  and usisng ~\eqref{eqn Lm cont.} (see, Appendix ~\ref{appen}), we have $\mathcal{L}_{m} \left( \frac{1}{R} \right) \geq \frac{1}{(m+1)R}$  and $\mathcal{L}_{m}(0) = 0$. Therefore, from above inequality, we have
\begin{equation*}
    \int_{\Omega_{n}} \frac{|u(x)|}{x_{d}} \mathcal{L}_{1} \left( \frac{x_{d}}{R} \right) \cdots \mathcal{L}_{m-1} \left( \frac{x_{d}}{R} \right) \mathcal{L}^{2}_{m} \left( \frac{x_{d}}{R} \right)  dx \geq \frac{1}{(m+1)R^{2}}  \int_{(-n,n)^{d-1} } |u'(x')| dx' .
\end{equation*}
For any $\alpha \geq 1$, we have
\begin{multline}\label{failure 1 eq}
   \sum_{m=2}^{\infty} \alpha^{m} \int_{\Omega_{n}} \frac{|u(x)|}{x_{d}}  \mathcal{L}_{1} \left( \frac{x_{d}}{R} \right) \cdots \mathcal{L}_{m-1} \left( \frac{x_{d}}{R} \right) \mathcal{L}^{2}_{m} \left( \frac{x_{d}}{R} \right)  dx \\ \geq \frac{1}{R^{2}} \int_{(-n,n)^{d-1} } |u'(x')| dx'  \sum_{m=2}^{\infty} \frac{\alpha^{m}}{m+1}  = \infty.
\end{multline}
This proves that Theorem ~\ref{th: main result} fails when $\alpha \geq 1$. 

\smallskip

We will now establish that the Theorem ~\ref{th: intermediate th} fails when $\alpha \geq 1$. We will prove by using contradiction. Let $u \in W^{s,1}(\Omega) \cap BV(\Omega)$ be a function defined in ~\eqref{contradiction function}. Assume there exists a constant  $C= C(\Omega, d, \alpha)>0$ and $\alpha \geq 1$ such that
\begin{multline*}
    \sum_{m=2}^{\infty} \alpha^{m} \int_{\Omega} \frac{|u(x)|}{\delta^{s}_{\Omega}(x)}  \mathcal{L}_{1} \left( \frac{\delta_{\Omega}(x)}{R} \right) \cdots \mathcal{L}_{m-1} \left( \frac{\delta_{\Omega}(x)}{R} \right) \mathcal{L}^{2}_{m} \left( \frac{\delta_{\Omega}(x)}{R} \right)  dx \\ \leq C (1-s)[u]_{W^{s,1}(\Omega)} + C ||u||_{L^{1}(\Omega)}.
\end{multline*}
Using Fatou's lemma and Lemma ~\ref{bv function}, we have for all $m_{0} >2$,
\begin{equation*}
\begin{split}
    \sum_{m=2}^{m_{0}} \alpha^{m} & \int_{\Omega} \frac{|u(x)|}{\delta_{\Omega}(x)}  \mathcal{L}_{1} \left( \frac{\delta_{\Omega}(x)}{R} \right) \cdots \mathcal{L}_{m-1} \left( \frac{\delta_{\Omega}(x)}{R} \right) \mathcal{L}^{2}_{m} \left( \frac{\delta_{\Omega}(x)}{R} \right)  dx \\ &  \leq  \sum_{m=2}^{m_{0}}  \alpha^{m} \liminf_{s \to 1} \int_{\Omega} \frac{|u(x)|}{\delta^{s}_{\Omega}(x)}  \mathcal{L}_{1} \left( \frac{\delta_{\Omega}(x)}{R} \right) \cdots \mathcal{L}_{m-1} \left( \frac{\delta_{\Omega}(x)}{R} \right) \mathcal{L}^{2}_{m} \left( \frac{\delta_{\Omega}(x)}{R} \right)  dx \\ & \leq \liminf_{s \to 1} \sum_{m=2}^{\infty}  \alpha^{m}  \int_{\Omega} \frac{|u(x)|}{\delta^{s}_{\Omega}(x)}  \mathcal{L}_{1} \left( \frac{\delta_{\Omega}(x)}{R} \right) \cdots \mathcal{L}_{m-1} \left( \frac{\delta_{\Omega}(x)}{R} \right) \mathcal{L}^{2}_{m} \left( \frac{\delta_{\Omega}(x)}{R} \right)  dx \\ & \leq \liminf_{s \to 1} C (1-s)[u]_{W^{s,1}(\Omega)} + C ||u||_{L^{1}(\Omega)} \leq C [u]_{BV(\Omega)} + C ||u||_{L^{1}(\Omega)}.
\end{split}  
\end{equation*}
which is a contradiction (see, ~\eqref{failure 1 eq}). This proves that Theorem ~\ref{th: intermediate th} fails when $\alpha \geq 1$.

\section{Appendix}\label{appen}

\subsection{Domain above the graph of a Lipschitz function}\label{lipschitzdomain}
In this section, we will prove that if ~$\Omega$ is a domain above the graph of a Lipschitz function ~$\gamma: \mathbb{R}^{d-1} \to \mathbb{R}$ and ~$F : \mathbb{R}^d \to \mathbb{R}^d$ be a map given by ~$F(x)= (\xi', \xi_{d})$ where ~$\xi ' = x'$ and ~$ \xi_{d} = x_{d} - \gamma(x')$. Then
\begin{equation*}
    \delta_{\Omega}(x) \sim \xi_{d}
\end{equation*}
for all ~$x \in \Omega$, i.e., there exists ~$C_{1, Poin}, ~ C_{2} >0$ such that
\begin{equation*}
    C_{1, Poin} \xi_{d} \leq \delta_{D}(x) \leq C_{2} \xi_{d} . 
\end{equation*}
Let ~$\gamma: \mathbb{R}^{d-1} \to \mathbb{R} $ be a Lipschitz function and ~$M >0$ such that ~$x', ~ y' \in \mathbb{R}^{d-1}$, we have
\begin{equation*}
    |\gamma(x') - \gamma(y')| \leq M |x'-y' | .
\end{equation*}
Let ~$F: \mathbb{R}^d \to \mathbb{R}^d$ such that ~$F(x)= (F_{1}(x), \dots, F_{d}(x))= (x', x_{d}- \gamma(x'))$ where ~$x'=(x_{1}, \dots , x_{d-1})$. Then
\begin{multline*}
     |F(x)-F(y)|^{2} = |x'-y'|^{2} + |x_{n}-y_{n}-\gamma(x')+ \gamma(y')|^{2} \\
    \leq |x'-y'|^{2} + |x_{d}-y_{d}|^{2} + |\gamma(x') - \gamma(y')|^{2} + 2 <x_{d}-y_{d}, \gamma(y') - \gamma(x')> \\
    \leq |x-y|^{2} + M^2 |x'-y'|^{2} + |x_{d}-y_{d}|^{2} + |\gamma(x')-\gamma(y')|^{2}   \\
    \leq |x-y|^{2} + 2M^2 |x'-y'|^{2} + |x_{d}-y_{d}|^{2} 
    \leq (2M^{2}+2) |x-y|^{2} .
\end{multline*}
Let ~$C= (2M^{2}+2)^{1/2}$, then ~$|F(x)-F(y)| \leq C |x-y|$. Define ~$G(\xi) = F^{-1}(\xi) = (\xi',\xi_{d}+ \gamma(\xi'))$. Then ~$G$ is Lipschitz and ~$| G(\xi)-G(\eta)| \leq (2M^{2}+2)^{1/2} |\xi-\eta|$. Hence, there exist ~$C>0$ such that
\begin{equation*}
    \frac{1}{C} |x-y| \leq |F(x)-F(y)| \leq C |x-y| .
\end{equation*}
Let ~$\Omega = \{ x \in \mathbb{R}^d : x_{d} > \gamma(x') \}$ and ~$\partial \Omega = \{ x \in \mathbb{R}^d : x_{d} = \gamma(x') \}$. Then ~$F(\Omega) = \mathbb{R}^{d}_{+}$ and ~$F(\partial \Omega) = \partial \mathbb{R}^d_{+}$. Let ~$x \in \Omega$ and ~$y \in \partial \Omega$ such that
\begin{equation*}
    \delta_{\Omega}(x) = |x-y| = \text{inf} \{ |x- \eta| : \eta \in \partial \Omega \} .
\end{equation*}
Then ~$\delta_{\Omega}(x) = |x-y| \leq |x- \eta|$ for all ~$\eta \in \partial \Omega$. Therefore,
\begin{equation*}
    \delta_{\Omega}(x) = |x-y| \leq C |F(x)- F(\eta)| 
    \leq C |F(x)- \xi | 
\end{equation*}
for all ~$\xi \in \partial \mathbb{R}^d_{+}$. So, ~$\delta_{\Omega}(x) \leq C \underset{\xi \in \partial \mathbb{R}^{d}_{+}}{\inf}  \{ |F(x) - \xi| \} = C F_{d}(x)$. Let ~$F(x) = (\xi' , \xi_{d})$, Then, we have ~$\delta_{\Omega}(x) \leq C \xi_{d}$. Similarly, considering ~$G$ we get ~$C_{1, Poin} \xi_{d} \leq \delta_{\Omega}(x)$. Therefore, ~$C_{1, Poin} \xi_{d} \leq \delta_{\Omega}(x) \leq C \xi_{d} $.

\subsection{Some estimates}\label{some estimate} \textbf{(1)}  Let $\theta >0 $ and for any $m \geq 1$, we establish that there exists a constant $C=C(\theta)>0$ such that
\begin{equation}\label{corollary appen}
    \mathcal{L}^{\theta}_{m} (t) \leq C \mathcal{L}^{2}_{m+1}(t), \hspace{3mm} \forall \ t \in (0,1).
\end{equation}
First assume $0 < \theta \leq 1$ and let $\mathcal{L}_{m}(t) = e^{-x}$. Then if $t=0$ then $x \to \infty$ and if $t=1$ then $x = 0$. Define,
\begin{equation*}
    g_\theta(x) = \frac{e^{\theta x}}{(1+x)^{2}} = \left( \frac{1}{ 1- \ln \left( \mathcal{L}_{m}(t) \right)} \right)^{2} \frac{1}{\mathcal{L}^{\theta}_{m}(t)} =  \frac{\mathcal{L}^{2}_{m+1}(t)}{ \mathcal{L}^{\theta}_{m}(t)}, \hspace{3mm} \forall \ x \in (0, \infty) .
\end{equation*}
Clearly,
\begin{equation*}
    \frac{\mathcal{L}^{2}_{m+1}(t)}{ \mathcal{L}^{\theta}_{m}(t)} \geq \min_{x\in (0,\infty)} g_\theta(x) = g_\theta \left(-1+ \frac{2}{\theta} \right) = \left( \frac{\theta}{2} \right)^{2} e^{2- \theta}. 
\end{equation*}
Therefore, we have
\begin{equation}\label{cor 5.1 1}
    \mathcal{L}^{\theta}_{m} (t) \leq \left( \frac{2}{\theta} \right)^{2} e^{\theta- 2}  \mathcal{L}^{2}_{m+1}(t), \hspace{3mm} \forall \ t \in (0,1) .
\end{equation}
Now, assume $\theta > 1$. Then, $\theta = n_{1} + r$, where $r \in (0,1]$. Therefore, using $\mathcal{L}_{m}(t) \leq 1$ and above inequality, we have
\begin{equation*}
    \mathcal{L}^{\theta}_{m}(t) = \mathcal{L}^{n_{1}}_{m}(t) \mathcal{L}^{r}_{m}(t) \leq \mathcal{L}^{r}_{m}(t) \leq \left( \frac{2}{r} \right)^{2} e^{r- 2}  \mathcal{L}^{2}_{m+1}(t)
\end{equation*}
This establishes the required inequality for any $\theta >0$.

\smallskip

\textbf{(2)} Let $\mathcal{L}_{m}$ defined in the introduction section and $R>1$. We prove
\begin{equation}\label{eqn Lm cont.}
    \mathcal{L}_{m} \left(  \frac{1}{R} \right) \geq \frac{1}{(m+1)R}.
\end{equation}
Since, $1-\ln \left( \frac{1}{R} \right) = 1 + \ln R \leq 2R$. Therefore, we have
\begin{equation*}
    \mathcal{L}_{1} \left( \frac{1}{R} \right) = \frac{1}{1-\ln \left( \frac{1}{R} \right)} \geq  \frac{1}{2R}.
\end{equation*}
From above inequality, we have $\ln \left( \mathcal{L}_{1} \left( \frac{1}{R} \right) \right) \geq \ln \left( \frac{1}{2R}  \right)$. Therefore, we have
\begin{equation*}
    1- \ln \left( \mathcal{L}_{1} \left( \frac{1}{R} \right) \right) \leq 1-\ln \left( \frac{1}{2R}  \right) =  1+\ln \left( 2R  \right) \leq 3R.
\end{equation*}
Using the definition of $\mathcal{L}_{2}$, we have
\begin{equation*} 
    \mathcal{L}_{2} \left( \frac{1}{R} \right) =  \frac{1}{1- \ln \left( \mathcal{L}_{1} \left( \frac{1}{R} \right) \right) } \geq \frac{1}{3R}.
\end{equation*}
From the definition of $\mathcal{Y}_{m}$ and using recursively, we obtain
\begin{equation*}
     \mathcal{L}_{m} \left( \frac{1}{R} \right) \geq \frac{1}{(m+1)R}.
\end{equation*}
This establishes the inequality.

\smallskip

\textbf{(3)} Let $\Omega_{n} = (-n,n)^{d-1} \times (0,1)$ and $A_{k}$ as defined in Lemma ~\ref{flat case 1}. We aim to show that
\begin{equation}\label{se1}
     \sum_{k= \ell}^{-2} [u]_{W^{s,1}(A_{k} \cup A_{k+1})} \leq 2 [u]_{W^{s,1}(\Omega_{n})}.
\end{equation}
Consider two families of sets:
\begin{equation*}
  \mathcal{E}:=  \left\{ A_{k} \cup A_{k+1} : -k \ \text{is even and} \ k \leq -1  \right\}
\end{equation*}
and
\begin{equation*}
  \mathcal{O}:=  \left\{ A_{k} \cup A_{k+1} : -k \ \text{is odd and} \ k \leq -1  \right\}.
\end{equation*}
Then $\mathcal{E}$ and $\mathcal{O}$ are collection of mutually disjoint sets respectively. Define
\begin{equation*}
    \mathcal{F}_{e} := \bigcup_{\substack{k =\ell \\ -k \ \text{is even}}}^{-2} A_{k} \cup A_{k+1} \hspace{3mm} \text{and} \hspace{3mm} \mathcal{F}_{o} := \bigcup_{\substack{k =\ell \\ -k \ \text{is odd}}}^{-2} A_{k} \cup A_{k+1}.
\end{equation*} 
From the definition of $A_{k}$, we have $\mathcal{F}_{e} \subset \Omega_{n}$ and $\mathcal{F}_{o} \subset \Omega_{n}$. Therefore, we have
\begin{multline}\label{sum Ak A(k+1)}
    \sum_{k= \ell}^{-2} [u]_{W^{s,1}(A_{k} \cup A_{k+1})} = \sum_{\substack{k =\ell \\ -k \ \text{is even}}}^{-2} [u]_{W^{s,1}(A_{k} \cup A_{k+1})} + \sum_{\substack{k =\ell \\ -k \ \text{is odd}}}^{-2} [u]_{W^{s,1}(A_{k} \cup A_{k+1})} \\ \leq [u]_{W^{s,1}(\mathcal{F}_{e})} + [u]_{W^{s,1}(\mathcal{F}_{o})} \leq 2 [u]_{W^{s,1}(\Omega_{n})}.
\end{multline}
This establishes the desired inequality.

\smallskip

\textbf{(4)} Let $A^{i}_{k}$ and $A^{j}_{k+1}$ be the cubes defined in Lemma ~\ref{flat case 1} such that $A^{i}_{k}$ lies below the cube $A^{j}_{k+1}$. Let $G^{j}_{k+1}$ be a cube of side length $2 \times 3^{k+1}+ 2 \times 3^{k}$ such that $A^{i}_{k} \cup A^{j}_{k+1} \subset G^{j}_{k+1} $ and $G^{j}_{k+1} \subset A_{k} \cup A_{k+1}$. Also, there are $3^{d-1}$ cubes of side length $2 \times 3^{k}$ (like $A^{i}_{k}$) lies below the cube $A^{j}_{k+1}$ and the same cube $G^{j}_{k+1}$ will work for all $3^{d-1}$ such cubes (like $A^{i}_{k}$). Therefore, we will establish:
\begin{equation}\label{se2}
    \sum_{j=1}^{\sigma_{k+1}} [u]_{W^{s,1}(G^{j}_{k+1})} \leq 2 [u]_{W^{s,1}(A_{k} \cup A_{k+1})}.
\end{equation}
According to the construction of $G^{j}_{k+1}$, the families of sets $\{ G^{j}_{k+1} : \ j \ \text{is even and} \ 1 \leq j \leq \sigma_{k+1} \}$ and $\{ G^{j}_{k+1} : \ j  \ \text{is odd and} \ 1 \leq j \leq \sigma_{k+1} \}$ are collection of mutually disjoint sets respectively. Therefore, similarly as the previous case, we have
\begin{equation*}
 \sum_{j=1}^{\sigma_{k+1}} [u]_{W^{s,1}(G^{j}_{k+1})} = \sum_{\substack{j=1 \\ j \ \text{is even}}}^{\sigma_{k+1}} [u]_{W^{s,1}(G^{j}_{k+1})} + \sum_{\substack{j=1 \\ j \ \text{is odd}}}^{\sigma_{k+1}} [u]_{W^{s,1}(G^{j}_{k+1})} \leq    2 [u]_{W^{s,1}(A_{k} \cup A_{k+1})}.
\end{equation*}
This establishes our inequality.

\bigskip

\textbf{Acknowledgement:} We express our gratitude to the Department of Mathematics and Statistics at the Indian Institute of Technology Kanpur for providing conductive research environment. For this work, Adimurthi acknowledges support from IIT Kanpur, while P. Roy is supported by the Core Research Grant (CRG/2022/007867) of SERB. V. Sahu is grateful for the support received through MHRD, Government of India (GATE fellowship).

\end{document}